\tikzset{
    position label/.style={
       text height = 1.5ex,
       text depth = 1ex
    },
   brace/.style={
     decoration={brace, mirror},
     decorate
   }
}
\numberwithin{equation}{section}
\newtheorem{thm}{Theorem}[section]
\newtheorem{lem}[thm]{Lemma}
\newtheorem{pro}[thm]{Proposition}
\newtheorem{cor}[thm]{Corollary}
\newtheorem{claim}{Claim}
\theoremstyle{definition}
\newtheorem{dfn}[thm]{Definition}
\newtheorem{rk}[thm]{Remark}
\def\cal#1{\fam2#1}
\def\R{{\mathbb R}}
\def\Z{{\mathbb Z}}
\def\B{{\mathbb B}}
\def\bb{\begin}
\def\be{\begin{equation}}
\def\ee{\end{equation}}
\def\bea{\begin{eqnarray}}
\def\eea{\end{eqnarray}}
\def\beaa{\begin{eqnarray*}}
\def\eeaa{\end{eqnarray*}}
\def\pr{{\bf pr}}
\def\bb{\begin}
           \def\ea{\end{array}}
          \def\ec{\end{center}}
     \def\ed{\end{description}}
\def\be{\bb{equation}}        \def\ee{\end{equation}}
\def\bea{\bb{eqnarray}}       \def\eea{\end{eqnarray}}
\def\beaa{\bb{eqnarray*}}     \def\eeaa{\end{eqnarray*}}
 \def\et{\end{thebibliography}}
        \def\d{\delta}
      \def\e{\varepsilon}
\def\D{{\cal D}}           
   \def\B{{\cal B}}
\def\bar2{\doublebar}
\def\qed {\hfill $\Box$\vskip5pt}
\def\dist{{\rm dist}}
\def\Orb{{\rm Orb}}
\newcommand{\capitalize}[1]{\expandafter\capitalizetwo#1\relax}
\def\capitalizetwo#1#2\relax{\MakeUppercase{#1}\MakeLowercase{#2}}
\title{\capitalize{Multidimensional} $C^0$ \MakeLowercase{transversality and the shadowing property for Axiom A diffeomorphisms}}
\author{Sogo Murakami\\
Graduate School of Mathematical Sciences, University of Tokyo, Japan}
\address{Graduate School of Mathematical Sciences, University of Tokyo, Japan}
\email{murakami-sogo880@g.ecc.u-tokyo.ac.jp}
\subjclass[2020]{Primary 37D20, 37C50.}
\keywords{Axiom A diffeomorphisms; Shadowing; Inclination lemma; Homology; $C^0$ transversality.}
\begin{document}

\begin{abstract}
	Petrov and Pilyugin (2015) generalized a notion of $C^0$ transversality of Sakai (1995) using smooth curves.
	Their definition involves only continuous maps from $\R^n$ to a manifold, which is a purely topological one.
	They also provided a sufficient condition for the $C^0$ transversality in terms of homological nature.
	In this paper, we prove that such a homological condition of Axiom A diffeomorphisms is sufficient for enjoying the shadowing property.
	Moreover, it is proved that the $C^0$ transversality of Axiom A diffeomorphisms with codimension one basic sets implies the homological condition.
\end{abstract}

\maketitle{}

\section{Introduction}
The theory of shadowing property has been developed in the study of hyperbolic dynamical systems (see \cite{S.P.} or \cite{R} for instance).
Many results concerning the relationship between various shadowing and the stability has been obtained for both diffeomorphisms and flows.
One of the most important studies among them is due to Pilyugin and Tikhomirov \cite{P.S.T.}, who proved that the Lipschitz shadowing property and the $C^1$ structural stability are equivalent for $C^1$ diffeomorphisms on a compact manifold.

Related to this research, it should be noticed that sufficient conditions for the shadowing property has not been completely understood.
In this direction, Sakai \cite{ShadowingAnTransversality} introduced the $C^0$ transversality condition for two smooth curves in closed surface and gave a rough proof for Axiom A diffeomorphisms on a closed surface, claiming the condition is necessary and sufficient for the shadowing property.
The complete proof of it was given by Sakai and Pilyugin \cite{P.S.trans} in $2007$.
In $2015$, Petrov and Pilyugin generalized the notion of $C^0$ transversality to arbitrary $C^0$ maps from $\R^n$ to a manifold \cite[Definition $1$, $2$, $3$]{PETROV2015}, giving some properties of $C^0$ transversality.
Indeed, they introduced a condition using homological condition (that is called {\it the condition $T(V_A, V_B)$}) and proved that the condition $T(V_A, V_B)$ is sufficient for $C^0$ transversality.

In this paper, we define a condition quite close to the condition $T(V_A, V_B)$ called {\it the $T^{s, u}$-condition}, and prove that, for an Axiom A diffeomorphism $f$ with the nonwandering set consisting of attractors, repellers and codimension one basic sets, three conditions, the $T^{s, u}$-condition, the condition $T(V_A, V_B)$ and the $C^0$ transversality condition, are equivalent.
Moreover, it is proved that if an Axiom A diffeomorphism $f$ satisfies the $T^{s, u}$-condition, then $f$ has the shadowing property.
As an application, we prove that if an Axiom A diffeomorphism $f$ has the nonwandering set consisting of attractors, repellers and codimension one basic sets, then the $C^0$ transversality condition is sufficient for the shadowing property.
This extends Sakai's result mentioned above for the sufficient condition in dimension $3$.

Let $M$ be an $m$-dimensional $C^\infty$ closed manifold with Riemannian metric $\dist$.
Let $f$ be a $C^1$ diffeomorphism on $M$ satisfying Axiom A.
Denote by $\Omega(f)$ the nonwandering set of $f$.
It is well known \cite{Wen2016DifferentiableDS} that
\[
	\bigcup_{x \in \Omega(f)} W^s(x)
	= \bigcup_{x \in \Omega(f)} W^u(x)
	= M.
\]
Then, for all $x \in M$ and $\sigma = s, u$, there exists $p \in \Omega(f)$ satisfying $x \in W^\sigma(p)$.
Then $W^\sigma(x) = W^\sigma(p)$ ($\sigma = s, u$) is an immersed manifold and for each $\sigma = s, u$, there exists a standard immersion $h^\sigma : \R^{\dim W^\sigma(x)} \to M$ such that $h^\sigma (\R^{\dim W^\sigma(x)}) = W^\sigma(x)$.


The first theorem gives a sufficient condition for Axiom A diffeomorphisms on a closed manifold to have the shadowing property (see Section \ref{sec.preliminaries} for the definition).
For an Axiom A diffeomorphism $f$, roughly speaking, we say that $f$ satisfies {\it the $T^{s, u}$-condition} when, for all $p \in M$, the inclusion from a subset of $\partial W^u_{\rm loc}(p)$ homeomorphic to $(m - \dim W^s(p) - 1)$-dimensional sphere to $B(\e, p) \setminus W^s_{\rm loc}(p)$ with some $\e > 0$ induces a nontrivial homomorphism between homology groups
(the precise definition of the $T^{s, u}$-condition will be given in Section \ref{sec.preliminaries}, where note that $T^{s, u}$ and $T^{u, s}$ have different meanings).
\begin{thm}\label{thm.homologyshadowing}
	Let $f$ be a $C^1$ Axiom A diffeomorphism on $M$.
	If $f$ satisfies the $T^{s, u}$ or $T^{u, s}$-condition,
	then $f$ has the shadowing property.
\end{thm}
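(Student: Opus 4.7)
The plan is to combine Smale's spectral decomposition, Bowen's shadowing lemma for hyperbolic basic sets, and a $\lambda$-inclination argument in which the $T^{s,u}$-condition plays the role of a $C^{0}$ substitute for transversality.

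I would first apply the spectral decomposition to write $\Omega(f)=\Lambda_{1}\sqcup\cdots\sqcup\Lambda_{k}$ with each $\Lambda_{i}$ a basic set. By Bowen's classical shadowing lemma, any pseudo-orbit that remains in a small isolating neighborhood of a single $\Lambda_{i}$ is shadowed by a true orbit, so the genuine task is to handle pseudo-orbits that wander among distinct basic sets. Any such transition passes through a point $p\in W^{u}(\Lambda_{i})\cap W^{s}(\Lambda_{j})$ for some $i\neq j$, and I would use the $T^{s,u}$-condition at such a $p$ to first rule out cycles in the Smale order on $\{\Lambda_{i}\}$: a cycle would permit the linking sphere in $\partial W^{u}_{\rm loc}(p)$ to be pushed off $W^{s}_{\rm loc}(p)$ inside $B(\varepsilon,p)$ after a finite number of iterations of $f$, contradicting the assumed homological nontriviality.

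With no cycles in hand, the core step is a local shadowing construction at each transition point $p$. The $T^{s,u}$-condition says that the embedded sphere $S^{m-\dim W^{s}(p)-1}\subset \partial W^{u}_{\rm loc}(p)$ represents a nontrivial class in the homology of $B(\varepsilon,p)\setminus W^{s}_{\rm loc}(p)$, i.e., it links $W^{s}_{\rm loc}(p)$. By the $\lambda$-inclination lemma, forward iterates of any small unstable disc $D\subset W^{u}_{\rm loc}(q)$ with $q\in\Lambda_{i}$ accumulate $C^{0}$-densely on $W^{u}(\Lambda_{j})$ near $p$. Since linking is a homotopy invariant preserved under sufficiently small $C^{0}$ perturbations, for all large $n$ the iterate $f^{n}(D)$ still pierces $W^{s}_{\rm loc}(p)$ inside $B(\varepsilon,p)$, and the resulting true intersection serves as the pivot that anchors the shadowing orbit at the transition from $\Lambda_{i}$ to $\Lambda_{j}$. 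The case of the $T^{u,s}$-condition is symmetric, using backward iterates and an analogous linking statement on the stable side. Concatenating Bowen shadowing on each basic set with these transition pivots via an inductive construction then produces a global shadowing orbit for any prescribed pseudo-orbit.

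The principal obstacle is turning the qualitative homological nontriviality into a quantitative statement that survives iteration and concatenation. Specifically, one must choose the radii $\varepsilon$ in the $T^{s,u}$-condition jointly with the sizes of the local product neighborhoods of each basic set and the unstable discs to be iterated, so that the $\lambda$-lemma approximation at each transition is finer than the tolerance required for linking preservation, while the resulting pivots still fit inside the Bowen shadowing windows of consecutive basic sets. Uniformizing these constants across all basic sets and over all admissible transition sequences, and carrying out the induction so that errors do not accumulate, is the delicate bookkeeping on which the argument rests.
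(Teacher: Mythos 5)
Your overall architecture --- spectral decomposition, Bowen shadowing inside each basic-set neighborhood, a no-cycle argument from the homological condition, and a linking-type argument at the transitions --- is exactly the paper's strategy. The gap is in the central step: you invoke ``the $\lambda$-inclination lemma'' to claim that forward iterates of an unstable disc accumulate $C^0$-densely on $W^u(\Lambda_j)$ near the transition point $p$. The classical inclination lemma requires the disc to be transverse to the stable manifold, and transversality is precisely what is not assumed here; under the $T^{s,u}$-condition alone, $f^n(D)$ need not converge to $W^u(\Lambda_j)$ in any graph sense, so ``linking is preserved under small $C^0$ perturbation'' has nothing to apply to. What must be proved --- and what occupies Sections 3--4 of the paper --- is a homological inclination lemma: one defines a class $\D(\alpha,\widetilde W^u(x,\beta)\times D^{m_0-u})$ of discs admitting an $m_0$-nontrivial map onto a product of the local unstable disc with an auxiliary ball, shows that membership in this class is what the $T^{s,u}$-condition delivers at an intersection point (Lemma \ref{lem.C0transstarimplyeclosed}), and then shows it is preserved (with the closeness parameter contracting like $\lambda^n$) under iteration, under perturbation of the base point, and under passage from one basic set to another (Lemmas \ref{lem.closeddiscchange}--\ref{lem.purtaofbase}). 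None of this is supplied by a citation to the classical $\lambda$-lemma.

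A second, related omission: when consecutive basic sets have different unstable dimensions, the iterated disc cannot be compared to the next local unstable manifold as a graph at all; the paper's auxiliary factor $D^{m_0-u}$, the product-nontriviality statement in Lemma \ref{lem.nontrivial}(2), and the dimension inequality $\iota+\kappa\ge m$ from Remark \ref{rk.homology}(1) are what make the transition step (Lemma \ref{lem.smallpurtforclosedball}, Proposition \ref{pro.betweenbasicsets}) work. Finally, ``concatenating Bowen shadowing with transition pivots'' does not by itself produce one bi-infinite shadowing orbit: the paper extracts the shadowing point as a point of $\bigcap_\ell D_\ell$ for a nested family of compact sets built from the sets $V^u(h_\ell\times\eta_\ell)$, and controls the accumulated error by an induction on the number of basic sets visited (Propositions \ref{pro.eachstep} and \ref{pro.induction}). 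These are the places where your ``delicate bookkeeping'' actually requires new constructions rather than care.
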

This Theorem is proved via the following proposition.
\begin{pro}\label{pro.starimpliesnocycle}
	Let $f$ be an Axiom A diffeomorphism on $M$. If $f$ satisfies the $T^{s, u}$-condition, then $f$ satisfies the no-cycles condition.
\end{pro}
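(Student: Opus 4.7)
The plan is to argue the contrapositive: assume $f$ admits a cycle $\Omega_{i_0} \to \Omega_{i_1} \to \cdots \to \Omega_{i_k} = \Omega_{i_0}$ of distinct basic sets with $k \geq 2$, where $\Omega_a \to \Omega_b$ denotes $W^u(\Omega_a) \cap W^s(\Omega_b) \neq \emptyset$, and derive a failure of the $T^{s,u}$-condition at some point. Choose heteroclinic representatives $p_j \in W^u(\Omega_{i_j}) \cap W^s(\Omega_{i_{j+1}})$ for each $j \in \{0, \dots, k-1\}$, and write $u_i = \dim W^u(\Omega_i)$, $s_i = \dim W^s(\Omega_i)$.

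The first step is dimensional. At $p_j$ we have $\dim W^u(p_j) = u_{i_j}$ and $\dim W^s(p_j) = s_{i_{j+1}}$, so $\partial W^u_{\rm loc}(p_j)$ has dimension $u_{i_j}-1$, while the $T^{s,u}$-condition there requires a sphere of dimension $m - s_{i_{j+1}} - 1$ to sit inside it; hence $u_{i_j} + s_{i_{j+1}} \geq m$. Summing these $k$ inequalities around the cycle and using the hyperbolicity identity $u_i + s_i = m$, both sides equal $km$, forcing each inequality to be an equality. Consequently, all basic sets in the cycle share common indices $(s, u)$ with $s+u=m$, and at each $p_j$ the ``sphere'' is precisely the full $(u-1)$-sphere $\partial W^u_{\rm loc}(p_j)$; the $T^{s,u}$-condition then asserts that this sphere links $W^s_{\rm loc}(p_j)$ nontrivially inside $B(\varepsilon, p_j)$, which is a form of topological transversality of $W^u(\Omega_{i_j})$ and $W^s(\Omega_{i_{j+1}})$ at $p_j$.

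To produce a contradiction, I would propagate the sphere around the cycle via a topological inclination lemma. Starting from $W^u_{\rm loc}(p_0)$ and applying $f^N$ for large $N$, one obtains a $u$-disk in $W^u(\Omega_{i_0})$ that closely approximates $W^u_{\rm loc}(\Omega_{i_1})$ near $\Omega_{i_1}$; by the topological transversality at $p_1$ this disk crosses $W^s(\Omega_{i_2})$, and iterating along the cycle yields an embedded $u$-disk $D \subset W^u(\Omega_{i_0})$ returning to a prescribed neighborhood of $p_0$, with boundary a $(u-1)$-sphere close to but generally not equal to $\partial W^u_{\rm loc}(p_0)$. The goal is to show that this returning boundary sphere is null-homologous in $B(\varepsilon, p_0) \setminus W^s_{\rm loc}(p_0)$ while simultaneously representing a nonzero multiple of the class of $\partial W^u_{\rm loc}(p_0)$, yielding the desired contradiction with the $T^{s,u}$-condition at $p_0$.

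The main obstacle is executing this homological bookkeeping. One must verify that the intersection $D \cap W^s_{\rm loc}(p_0)$ is controlled by the heteroclinic structure — essentially by the isolated points of $W^u(\Omega_{i_0}) \cap W^s(\Omega_{i_1})$ lying on $D$ — and then argue that $D$, possibly after a suitable modification, provides a nullbordism of its boundary inside $B(\varepsilon, p_0) \setminus W^s_{\rm loc}(p_0)$. Establishing this rigorously in the purely topological setting of the paper will likely rely on a careful $C^0$ form of the $\lambda$-lemma developed earlier, and constitutes the technical heart of the argument.
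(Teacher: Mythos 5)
Your first half---the dimension count around the cycle forcing $u_{i_j}+s_{i_{j+1}}=m$ at every heteroclinic point, hence a common index for all basic sets in the cycle---is exactly the paper's opening move, and the idea of propagating a disk around the cycle with a homological inclination lemma is also the paper's strategy. The gap is in your proposed contradiction. You want the returning boundary $(u-1)$-sphere to be null-homologous in $B(\e,p_0)\setminus W^s_{\rm loc}(p_0)$, with the returning disk $D$ serving as a nullbordism. But the $T^{s,u}$-condition (which implies $\d$-essentiality of the intersection, by the result of Petrov--Pilyugin quoted in Remark \ref{rk.PETROV2015prop2}) forces any disk $C^0$-close to $W^u_{\rm loc}(p_0)$ to \emph{meet} $W^s_{\rm loc}(p_0)$; so $D$ cannot lie in the complement and cannot be a nullbordism there. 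Excising small neighborhoods of the points of $D\cap W^s_{\rm loc}(p_0)$ only expresses $[\partial D]$ as a sum of classes of small linking spheres, and there is no reason for that sum to vanish---extra heteroclinic intersections are perfectly consistent with the existence of a cycle, so no homological contradiction is available at $p_0$.

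The contradiction has to be dynamical, and this is where the choice of starting disk matters. The paper starts not from $W^u_{\rm loc}(p_0)$ but from a small $u$-disk contained in a \emph{wandering} neighborhood $U$ of a heteroclinic point $x_1$ (so $f^n(U)\cap U=\emptyset$ for all $n\ge 1$), transverse to $W^s(p_2)$. After carrying it around the cycle via Lemmas \ref{lem.C0transstarimplyeclosed}, \ref{lem.closeddiscchange}, \ref{lem.basicofepsilonclosever2} and \ref{lem.lemforTsuimpliesnocycle}, the returned disk $f^{L+n\pi_1}(D_0)$ is ``$\lambda^{n\pi_1}\alpha$-close'' to $W^u(r_1,\beta)$ in the sense of Remark \ref{rk.motivationofD(a(w(xb)))}; since backward iterates of $x_1$ along the period of $r_1$ enter ${\rm Int}\,W^u(r_1,\beta)$, the returned disk meets $\Orb^-(U)$ while being contained in $\Orb^+(U)$ by construction, contradicting that $U$ is wandering. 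You would also need the paper's preliminary reduction (via density of periodic points and stability of $\d$-essential intersections under perturbation of the base points) replacing the points of the basic sets by periodic points lying on a single orbit in each $\Lambda_i$, which your sketch omits. As it stands, your closing step would fail, so the proof is not complete.
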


Let us recall the definition of $C^0$ transversality given by Petrov and Pilyugin \cite{PETROV2015}.
For topological space $A$,
we consider the $C^0$ uniform metric on the space of continuous mappings from $A$ to $M$; i.e.,
\[
	\lvert h_1, h_2 \rvert_{C^0} = \sup \{\dist(h_1(x), h_2(x)) ; x \in A\}.
\]
In the following Definitions \ref{dfn.dessential}, \ref{dfn.c0transverseatthepair} and \ref{dfn.c0transverse},
let $A$ and $B$ be topological spaces and let $h_1 : A \to M$ and $h_2 : B \to M$ be continuous mappings.
\begin{dfn}\label{dfn.dessential}
	Let $C \subset A$, $D \subset B$ and $\d > 0$ be given.
	Then we say that the intersection $h_1(C) \cap h_2(D)$ is $\d$-essential if
	\[\widetilde{h}_1(C) \cap \widetilde{h}_2(D) \neq \emptyset\]
	for all $\widetilde{h}_i$, $i = 1, 2$, such that $\lvert h_1, \widetilde{h}_1 \rvert_{C^0} \leq \d$ and $\rvert h_2, \widetilde{h}_2 \rvert_{C^0} \leq \d$.
\end{dfn}
\begin{dfn}\label{dfn.c0transverseatthepair}
	Assume $h_1(a) = h_2(b)$ for some $a \in A$ and $b \in B$.
	We say that $h_1$ and $h_2$ are $C^0$ transverse at the pair $(a, b)$
	if for all neighborhoods $U(a) \subset A$ and $U(b) \subset B$ of $a$ and $b$, respectively,
	there exists $\d > 0$ such that the intersection $h_1(U(a)) \cap h_2(U(b))$ is $\d$-essential.
\end{dfn}
\begin{dfn}\label{dfn.c0transverse}
	We say that $h_1$ and $h_2$ are $C^0$ transverse
	if $h_1$ and $h_2$ are $C^0$ transverse at any pair $(a, b) \in A \times B$ with $h_1(a) = h_2(b)$.
\end{dfn}
We say that $f$ satisfies the {\it $C^0$ transversality condition}
if $W^s(p)$ and $W^u(q)$ are $C^0$ transverse for all $p, q \in \Omega(f)$.
Our second main theorem is a partial generalization of the main theorem of \cite{P.S.trans}.
\begin{thm}\label{thm.3dim}
	Let $f$ be a $C^1$ Axiom A diffeomorphism on $M$.
	Assume that
	\[
		\dim W^s(x), \, \dim W^u(x) \in \{ 0, 1, m-1, m \}
	\]
	for all $ x \in M$.
	If $f$ satisfies the $C^0$ transversality condition,
	then $f$ has the shadowing property.
\end{thm}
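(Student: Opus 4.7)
The plan is to deduce Theorem \ref{thm.3dim} from Theorem \ref{thm.homologyshadowing} by showing that, under the dimensional hypothesis, $C^0$ transversality implies the $T^{s,u}$-condition. The identity $\dim W^s(x)+\dim W^u(x)=m$ combined with $\dim W^s(x),\dim W^u(x)\in\{0,1,m-1,m\}$ forces the pair $(\dim W^s(p),\dim W^u(p))$ into $\{(m,0),(0,m),(m-1,1),(1,m-1)\}$ for every $p\in M$; every basic set is therefore an attractor, a repeller, or a codimension-one saddle of one of the two types. For an attractor $\partial W^u_{\rm loc}(p)=\emptyset$ so the condition is vacuous, and for a repeller $W^s_{\rm loc}(p)=\{p\}$ with $\partial W^u_{\rm loc}(p)\hookrightarrow B(\e,p)\setminus\{p\}$ a homotopy equivalence $S^{m-1}\simeq S^{m-1}$; the $T^{s,u}$-condition is immediate in both.

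For $p$ with $(\dim W^s(p),\dim W^u(p))=(m-1,1)$, the set $\partial W^u_{\rm loc}(p)$ consists of two endpoints $q_-,q_+$ of the local unstable arc, and $W^s_{\rm loc}(p)$ is a topological codimension-one submanifold that locally separates $B(\e,p)$ into two open components. The $T^{s,u}$-condition then reduces to the assertion that $q_-$ and $q_+$ lie in different components. I would argue by contradiction: if $q_\pm$ lay in the same component, then picking a connecting path $\gamma\subset B(\e,p)\setminus W^s_{\rm loc}(p)$ and rerouting the arc $h^u([-1,1])$ through $\gamma$ yields a continuous $\widetilde h^u:\R\to M$ agreeing with $h^u$ outside a small interval, $C^0$-close to $h^u$, and whose image misses a neighborhood of $p$ in $W^s_{\rm loc}(p)$. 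Keeping $h^s$ fixed, this contradicts $C^0$ transversality of $h^s$ and $h^u$ at any pair $(a,b)$ with $h^s(a)=h^u(b)=p$.

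For $p$ with $(\dim W^s(p),\dim W^u(p))=(1,m-1)$, removing the arc $W^s_{\rm loc}(p)$ from the ball $B(\e,p)$ yields a space that deformation-retracts to $S^{m-2}$, while $\partial W^u_{\rm loc}(p)\cong S^{m-2}$; the $T^{s,u}$-condition becomes the statement that the inclusion-induced map $S^{m-2}\to S^{m-2}$ has nonzero degree, i.e.\ that $\partial W^u_{\rm loc}(p)$ nontrivially links the arc $W^s_{\rm loc}(p)$. If the degree vanished, then $\partial W^u_{\rm loc}(p)$ would bound a singular $(m-1)$-disk $\Delta$ in $B(\e,p)\setminus W^s_{\rm loc}(p)$; splicing $\Delta$ into the image of $h^u$ in place of the original $W^u_{\rm loc}(p)$ and interpolating continuously to the unaltered $h^u$ outside a chart around $p$ produces a $C^0$-perturbation of $h^u$ whose image avoids $W^s_{\rm loc}(p)$ near $p$, again contradicting $C^0$ transversality at $(a,b)$.

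The main obstacle is the second codimension-one case: converting a purely homological triviality (failure of linking) into an explicit continuous $\widetilde h^u:\R^{m-1}\to M$ that is globally $C^0$-close to $h^u$ and does not accidentally create new intersections with $W^s_{\rm loc}(p)$ during the interpolation. I would handle this by performing the entire modification inside a single coordinate chart about $p$, using a continuous cutoff between the bounding disk $\Delta$ and the unaltered $h^u$ near the chart boundary, and verifying that the interpolating family stays within a controlled neighborhood of $W^u_{\rm loc}(p)\cup\Delta$ so that $\lvert h^u,\widetilde h^u\rvert_{C^0}\to 0$ as $\e\to 0$.
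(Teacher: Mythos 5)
Your overall reduction is the same as the paper's: prove that under the dimension hypothesis the $C^0$ transversality condition implies the $T^{s,u}$-condition (this is the paper's Proposition \ref{pro.lowdim}) and then invoke Theorem \ref{thm.homologyshadowing}. Two problems, one small and one serious. The small one: the identity $\dim W^s(x)+\dim W^u(x)=m$ is false for wandering points; one only gets $\dim W^s(x)+\dim W^u(x)\ge m$ (a heteroclinic point can lie on an $(m-1)$-dimensional stable manifold of one basic set and an $(m-1)$-dimensional unstable manifold of another), so your enumeration misses the case $(\dim W^s(x),\dim W^u(x))=(m-1,m-1)$, which the paper treats together with $(m-1,1)$ since only the codimension of $W^s$ matters there. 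This is fixable.

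The serious gap is in how you derive a contradiction with $C^0$ transversality in both codimension-one cases. $C^0$ transversality at $(a,b)$ asserts that for each pair of neighborhoods there is \emph{some} $\delta>0$ making the intersection $\delta$-essential; to contradict it you must exhibit, for a fixed pair of neighborhoods, perturbations of \emph{arbitrarily small} $C^0$ size with empty intersection. Your rerouting of the unstable arc through a connecting path, and your splicing of a bounding disk $\Delta$ in place of $W^u_{\rm loc}(p)$, both produce perturbations whose $C^0$ size is comparable to the diameter of the ball $B(\e,p)$ — a quantity that is fixed once the neighborhoods are fixed — so they do not defeat $\delta$-essentiality for small $\delta$. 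Moreover, in the $(m-1,1)$ case, ``both endpoints on the same side'' does not mean the arc stays on one side, so there is no small push that separates it from $W^s_{\rm loc}(p)$. The paper avoids this entirely: it never fills a homologically trivial cycle by a disk. Instead it proves a local sign-change claim (Claim \ref{claim.lowdim} and its analogue in Case 2) — if the height function of one manifold over the other were weakly one-signed near a zero, an arbitrarily small push of size $\e$ in the normal direction would empty the intersection on a fixed subneighborhood, contradicting $C^0$ transversality — and then \emph{constructs} the required interval (endpoints of opposite sign) or linking sphere $S_0$ explicitly, verifying nontriviality in homology by a homotopy to a linear model. If you want to keep your route, you would need to prove the converse of Petrov--Pilyugin's Proposition 2 (that homological triviality forces failure of $\delta$-essentiality for every $\delta$), which is precisely the nontrivial content that the sign-change argument is designed to replace.
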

The hypothesis of this theorem can be dropped in dimension $\leq 3$.
\begin{cor}
	Let $M$ be a closed manifold with dimension less than or equal to $3$ and let $f$ be a $C^1$ Axiom A diffeomorphism on $M$.
	If $f$ satisfies the $C^0$ transversality condition,
	then $f$ has the shadowing property.
\end{cor}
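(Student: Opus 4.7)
The plan is simply to invoke Theorem \ref{thm.3dim} and verify that its dimension hypothesis holds automatically whenever $\dim M \leq 3$; no new analysis is required beyond a pigeonhole observation on integer dimensions.

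First, I would fix any $x \in M$ and any $\sigma \in \{s, u\}$. By the discussion preceding Theorem \ref{thm.homologyshadowing}, there exists $p \in \Omega(f)$ with $x \in W^\sigma(p)$, and $W^\sigma(x) = W^\sigma(p)$ is an immersed submanifold of $M$ whose dimension $d := \dim W^\sigma(p)$ is an integer with $0 \leq d \leq m$. Since $\Omega(f)$ is partitioned into basic sets and stable (resp.\ unstable) manifolds of distinct basic sets are disjoint, the number $d$ is a well-defined function of $x$.

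Next, I would observe that when $m \leq 3$ the set $\{0, 1, m-1, m\}$ exhausts $\{0, 1, \dots, m\}$: for $m = 1$ it equals $\{0, 1\}$; for $m = 2$ it equals $\{0, 1, 2\}$; for $m = 3$ it equals $\{0, 1, 2, 3\}$. In each case every admissible dimension already lies in the list, so $\dim W^s(x), \dim W^u(x) \in \{0, 1, m-1, m\}$ holds for every $x \in M$ without any additional assumption on $f$.

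Finally, applying Theorem \ref{thm.3dim} yields the shadowing property. There is no genuine obstacle in this argument: all the substantive content is absorbed into Theorem \ref{thm.3dim}, and the corollary reflects only the fact that the gap $\{2, \dots, m-2\}$ of \emph{forbidden} intermediate dimensions is empty precisely when $m \leq 3$.
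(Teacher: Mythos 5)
Your proposal is correct and matches the paper's intended argument: the paper states the corollary immediately after Theorem \ref{thm.3dim} with the remark that the hypothesis ``can be dropped in dimension $\leq 3$,'' which is exactly your observation that $\{0,1,m-1,m\}$ exhausts all possible values of $\dim W^\sigma(x)$ when $m \leq 3$. Nothing further is needed.
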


\section{Preliminaries}\label{sec.preliminaries}
In this section, we first give the definition of the shadowing property for homeomorphisms on $M$.
For a homeomorphism $f : M \to M$ and $d > 0$,
we call a sequence
\[
	\xi = \{x_k ; k \in \Z\}
\]
a {\it $d$-pseudotrajectory} if
\[
	\dist(f(x_k), x_{k+1}) < d, \quad k \in \Z.
\]
$d$-pseudotrajectory $\xi$ is {\it $\e$-shadowed} by $x \in M$ when
\[
	\dist(x_k, f^k(x)) < \e, \quad k \in \Z.
\]
We say that $f$ has {\it the shadowing property} is for every $\e>0$, there is $d > 0$ such that every $d$-pseudotrajectory $\xi$ is $\e$-shadowed by some point.

In \cite{PETROV2015}, Petrov and Pilyugin gave a sufficient condition for an intersection of continuous images of two sets to be $\d$-essential.
Now,
we recall the condition and define a similar condition called the $T^{s, u}$-condition in line with the sufficient condition in \cite{PETROV2015}.
Let $A$ and $B$ be smooth manifolds with dimension $\iota$ and $\kappa$, respectively,
and let $h_1 : A \to M$ and $h_2 : B \to M$ be $C^1$ immersions.

Since $h_1$ is an immersion, for every $a \in A$, there exists a neighborhood $U(a) \subset A$ of $a$ such that $h_1 \vert_{U(a)}$ is an embedding.
Then there are a neighborhood $V$ of $h_1(a) \in M$ and diffeomorphism $\varphi : V \to (-1, 1)^m$ satisfying
\[
	\varphi(h_1(U(a)) \cap V) = (-1, 1)^\iota \times \{ 0 \}^{m-\iota}.
\]
Such a diffeomorphism is called a {\it local chart of $U(a)$ around $a$}.
\begin{dfn}\label{dfn.TUaUbcondition}
	Let $p \in M$ be such that $p = h_1(a) = h_2(b)$ for some $a \in A$ and $b \in B$.
	Let $U(a) \subset A$ and $U(b) \subset B$ be neighborhoods of $a$ and $b$, respectively.
	If $\varphi : U \to (-1, 1)^m$ is a local chart of $U(a)$ around $a$,
	then we say that $h_1(U(a))$ and $h_2(U(b))$ satisfy the strong $T(U(a), U(b))$-condition (resp. condition $T(U(a), U(b))$) at $p$ when one of the following properties hold:
	\begin{itemize}[leftmargin=0.6cm, itemindent=0cm]
		\item $\iota = m$.
		\item $\iota < m$ and there exists an embedded $(m-\iota)$-dimensional closed ball $B_u \subset U(b)$ (resp. an image of continuous map $B_u \subset U(b)$ of an $(m-\iota)$-dimensional closed ball) centered at $p$ such that
		      $\varphi \circ h_2(\partial B_u) \subset (-1, 1)^m \setminus ((-1, 1)^\iota \times \{0\}^{m-\iota})$
		      and the induced homomorphism between reduced homology groups
		      \[
			      (\varphi \circ h_2)_* : \tilde{H}_{m-\iota-1}(\partial B_u) \to \tilde{H}_{m-\iota-1}((-1, 1)^m \setminus ((-1, 1)^\iota \times \{0\}^{m-\iota}))
		      \]
		      is nontrivial.
	\end{itemize}
	
\end{dfn}
Note that this definition is independent of the choice of the local chart.
\begin{rk}\label{rk.PETROV2015prop2}
	In \cite[Proposition 2]{PETROV2015}, Petrov and Pilyugin proved that if the condition $T(U(a), U(b))$ holds, then the intersection is $\d$-essential for sufficiently small $\d> 0$.
	The strong $T(U(a), U(b))$-condition clearly implies the condition $T(U(a), U(b))$. We will prove that for $\iota, \kappa = 0, 1, m-1, m$, if an intersection $h_1(U(a)) \cap h_2(U(b))$ is $\d$-essential, then the intersection satisfies the strong $T(U(a), U(b))$-condition. Thus, these three conditions are equivalent when $\iota, \kappa = 0, 1, m-1, m$.
\end{rk}
For simplicity, we just use a terminology the $T(U(a), U(b))$-condition instead of the strong $T(U(a), U(b))$-condition.
\begin{rk}\label{rk.homology}$\,$
	\begin{itemize}[leftmargin=0.6cm, itemindent=0cm]
		\item[(1)]
		      It is well-known that if there exists $B_u \subset U(b)$ homeomorphic to $B^{m-\iota}$,
		      then $\kappa \geq m-\iota$ holds \cite[Theorem 2B.3.]{HatcherAlgTop}.
		      Therefore, if $h_1(U(a))$ and $h_2(U(b))$ satisfy the $T(U(a), U(b))$-condition at $p$,
		      then $\iota + \kappa \geq m$.
		\item[(2)]
		      If $h_1(U(a))$ and $h_2(U(b))$ have a transversal intersection at $h_1(a) = h_2(b)$,
		      then there is a local chart $\varphi$ of $U(a)$ around $a$ such that
		      \[
			      \varphi^{-1}(\{ 0 \}^\iota \times (-1, 1)^{m-\iota}) \subset h_2(U(b)).
		      \]
		      In this case, by taking an embedded $(m-\iota)$-dimensional closed ball
		      \[
			      B_u \subset h_2^{-1} \circ \varphi^{-1}(\{ 0 \}^\iota \times (-1, 1)^{m-\iota}) \subset U(b)
		      \]
		      centered at $b$,
		      we can prove that $h_1(U(a))$ and $h_2(U(b))$ satisfy the $T(U(a), U(b))$-condition at $h_1(a) = h_2(b)$.
	\end{itemize}
\end{rk}

\begin{dfn}
	Assume $h_1(a) = h_2(b)$ for some $a \in A$ and $b \in B$.
	Then a pair $(h_1, h_2)$ is called $T$-pair at $h_1(a) = h_2(b)$ if for all neighborhoods $U(a) \subset A$ and $U(b) \subset B$ of $a$ and $b$, respectively,
	$h_1(U(a)))$ and $h_2(U(b))$ satisfy the $T(U(a), U(b))$-condition at $h_1(a) = h_2(b)$.
	Moreover, if $(h_1, h_2)$ is $T$-pair at any point $p \in M$ such that $p = h_1(a) = h_2(b)$ for some pair $(a, b) \in A \times B$, then we say that $h_1$ and $h_2$ satisfy the $\tilde{T}$-condition.
\end{dfn}
We say that $W^s(p)$ and $W^u(q)$ (resp. $W^u(q)$ and $W^s(p)$) satisfy the {\it $T$-condition} if the standard immersions $h^s : \R^{\dim W^s(p)} \to W^s(p)$ and $h^u : \R^{\dim W^u(p)} \to W^u(q)$ (resp. $h^u$ and $h^s$) satisfy the $\tilde{T}$-condition.

We say that $f$ satisfies the {\it $T^{s, u}$-condition} (resp. {\it $T^{u, s}$-condition}) if $W^s(p)$ and $W^u(q)$ (resp. $W^u(q)$ and $W^s(p)$) satisfy the $T$-condition for all $p, q \in \Omega(f)$.
In \cite{PETROV2015}, it is proved that if $f$ satisfies the $T^{s, u}$-condition then $f$ has the $C^0$ transversality condition.

\section{Invariant disk families in neighborhoods of basic sets}\label{sec.prepforthm.homologyshadowing}
Let $\Lambda \subset M$ be a compact locally maximal hyperbolic set of $f$.
It is well known (\cite[p.131]{NbdsofHypsets}) that for some neighborhood $U$ of $\Lambda$, we may extend the hyperbolic splitting $T_{\Lambda}M = E^s \oplus E^u$ to vector bundles $T_{U}M = \widetilde{E}^s \oplus \widetilde{E}^u$ such that
\[
	Tf(\widetilde{E}^s_x) = \widetilde{E}^s_{f(x)}, \quad x \in U \cap f^{-1}(U),
\]
and
\[
	Tf(\widetilde{E}^u_x) = \widetilde{E}^u_{f(x)}, \quad x \in U \cap f^{-1}(U).
\]
Let $\| \cdot \|$ be the norm on $TM$ induced by the Riemannian metric. For $(x, v) \in T_{U}M$, define
\[
	\lvert v \rvert = \max \{ \| v_s \|, \| v_u \| \}, \quad v \in T_x M, x \in U,
\]
where $v_s \in \widetilde{E}^s_{x}$, $v_u \in \widetilde{E}^u_{x}$ and $v_s + v_u = v$.
Also, it is well known (\cite[Theorem 4.5]{NbdsofHypsets}) that (replacing $f$ by $f^N$ for sufficiently large $N$ if necessary),
there are $\lambda \in (0, 1)$ and families of embedded closed balls
\[
	x \in \widetilde{W}^\sigma(x) \subset M, \quad x \in U, \sigma = s, u,
\]
satisfying the following properties:
\begin{itemize}[leftmargin=0.6cm, itemindent=0cm]
	\item $\widetilde{W}^s(x)$ and $\widetilde{W}^u(x)$ depend continuously on $x$;
	\item If $x \in \Lambda$, then $\widetilde{W}^\sigma(x) = W^\sigma_{\rm loc}(x)$ for $\sigma = s, u$;
	\item $f(\widetilde{W}^s(x)) \subset \widetilde{W}^s(f(x))$ if $x \in U \cap f^{-1}(U)$;
	\item $f^{-1}(\widetilde{W}^u(x)) \subset \widetilde{W}^u(f^{-1}(x))$ if $x \in U \cap f(U)$.
\end{itemize}
Define metrics $d^\sigma_x$ on $\widetilde{W}^\sigma(x)$, $\sigma = s, u$, by
\[
	d^\sigma_x(y, z) = \lvert \exp_x^{-1}(y) - \exp_x^{-1}(z) \rvert, \quad y, z \in \widetilde{W}^\sigma(x), \quad x \in U_i.
\]
Then we also have the following properties:
\begin{itemize}[leftmargin=0.6cm, itemindent=0cm]
	\item
	      $d^s_{f(z)}(f(x), f(y)) \leq \lambda d^s_z(x, y)$ for all $x, y \in \widetilde{W}^s(z)$ if $z \in U \cap f^{-1}(U)$;
	\item $d^u_{f^{-1}(z)}(f^{-1}(x), f^{-1}(y)) \leq \lambda d^u_z(x, y)$ for all $x, y \in \widetilde{W}^u(z)$ if $z \in U \cap f(U)$.
\end{itemize}

Let $\dim \widetilde{W}^\sigma (x)$ be the dimension of the closed disk $\widetilde{W}^\sigma (x)$.
There exists $C_0 > 0$ such that
\begin{equation}
	C_0^{-1}\dist(x, y) < d^\sigma_z(x, y) < C_0\dist(x, y)\label{eq.constofequivmetric}
\end{equation}
for all $x, y \in \widetilde{W}^\sigma(z)$ and $z \in U_i$.
Then if $\dist(x, y)$ with $x, y \in U_i$ is small enough,
there exists a unique point
\[
	[x, y] = \widetilde{W}^u(x) \cap \widetilde{W}^s(y).
\]
For $x \in U$, $\sigma = s, u$,
and a small positive number $\alpha$,
we set
\[
	\widetilde{W}^\sigma(x, \alpha) = \{ y \in \widetilde{W}^\sigma(x) : d_x^\sigma(x, y) \leq \alpha \}, \quad \sigma = s, u.
\]
For any point $x \in U_i$,
and small positive numbers $\alpha$ and $\beta$, let
\[
	\B(x, \alpha, \beta) = \{ y \in U :  \widetilde{W}^s(y, \alpha) \cap \widetilde{W}^u(x, \beta) \neq \emptyset \}.
\]
Note that $\B(x, \alpha, \beta)$ is a neighborhood of $x \in M$.
\begin{claim}\label{claim.condonWi}
	There exist a closed neighborhood $W \subset U \cap f^{-1}(U)$ of $\Lambda$ and constants $\overline{\alpha}, C > 0$ satisfying the following properties:
	\begin{enumerate}[leftmargin=0.8cm, itemindent=0cm]
		\item[$(1)$] $\B(x, \alpha, \beta) \subset U \cap f^{-1}(U)$ for all $x \in W$ and $\alpha, \beta \in (0, \overline{\alpha})$.
		\item[$(2)$] For every $\e > 0$, there exists $\d > 0$ such that if $x, x' \in W$ and $y, y' \in U$ satisfy $\widetilde{W}^s(y, \overline{\alpha}) \cap \widetilde{W}^u(x, \overline{\alpha}) \neq \emptyset$ and $\dist(x, x'), \dist(y, y') < \d$, then
		      \[
			      d^u_x([x, y], [x, y']), \, d^s_y([x, y], [x', y]) < \e.
		      \]
		\item[$(3)$]
		      Let $x \in W$ satisfy $f^k(x) \in W$ for all $k \ge 0$. 
		      Then, $y \in W^s(q, C)$ for all $q \in \Lambda$ and $y \in U$ satisfying $W^s(q, \overline{\alpha}) \cap \widetilde W^s(y, \overline{\alpha}) \ni x$.
		      Here it is clear that there exists such a point $q$ by the choice of $x$.  
	\end{enumerate}
\end{claim}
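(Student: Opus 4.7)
For part (1), I would fix a compact neighborhood $W$ of $\Lambda$ with $W \subset \operatorname{int}(U \cap f^{-1}(U))$, and pick $\eta > 0$ less than $\dist(W, M \setminus (U \cap f^{-1}(U)))$. For any $y \in \B(x, \alpha, \beta)$ with $x \in W$, a witness $w \in \widetilde{W}^s(y, \alpha) \cap \widetilde{W}^u(x, \beta)$ together with (\ref{eq.constofequivmetric}) gives
\[
\dist(x, y) \leq \dist(x, w) + \dist(w, y) < C_0(\alpha + \beta).
\]
Hence (1) follows as soon as $2 C_0 \overline{\alpha} < \eta$.

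For part (2), the bracket $[x, y] = \widetilde{W}^u(x) \cap \widetilde{W}^s(y)$ is continuous in $(x, y)$ because the two disk families depend continuously on their base points and the splittings $\widetilde{E}^s, \widetilde{E}^u$ are transverse. Part (1) confines the relevant pairs $(x, y)$ to a compact subset of $W \times U$, where continuity becomes uniform; passing between $\dist$ and the $d^\sigma$-metrics via (\ref{eq.constofequivmetric}) then produces (2).

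Part (3) is the substantive step and where the main obstacle lies. My plan is to propagate the hypothesis forward in time: forward invariance $f(\widetilde{W}^s(z)) \subset \widetilde{W}^s(f(z))$ together with the stable contraction $d^s_{f(z)}(f(a), f(b)) \leq \la \, d^s_z(a, b)$ yield, by induction,
\[
d^s_{f^k(y)}(f^k(x), f^k(y)) \leq \la^k \overline{\alpha}, \quad k \geq 0,
\]
provided every $f^j(y)$ with $j < k$ lies in $U \cap f^{-1}(U)$. The delicate, self-referential point is that this domain condition is what permits the contraction, while it is the contraction itself---combined with $f^k(x) \in W$ and $\dist(f^k(x), f^k(y)) < C_0 \la^k \overline{\alpha}$ via (\ref{eq.constofequivmetric})---that keeps $f^k(y)$ inside $U \cap f^{-1}(U)$. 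Closing this loop by shrinking $\overline{\alpha}$ small enough relative to the width of the collar $U \setminus W$ is the main obstacle; once it is done, the same argument applied to each $z \in \widetilde{W}^s(y, \overline{\alpha})$ shows the whole disk is forward-attracted to the orbit of $q$, so $\widetilde{W}^s(y, \overline{\alpha}) \subset W^s(q)$. Together with $W^s(q, \overline{\alpha}) \subset W^s(q)$, these two disks meet at $x$ and produce a connected path from $q$ through $x$ to $y$ inside $W^s(q)$ whose intrinsic length is bounded by a universal multiple of $\overline{\alpha}$; this bound is taken as $C$.
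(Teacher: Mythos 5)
Your plan matches the paper's proof in all three parts: the same $2C_0\overline{\alpha}$ triangle-inequality estimate for (1), the same uniform continuity of $(x,y)\mapsto [x,y]$ on a compact set for (2), and for (3) the same bootstrap in which the contraction along $\widetilde{W}^s$ keeps $f^k(y)$ within $C_0\lambda^k\overline{\alpha}$ of $f^k(x)\in W$, so that taking $\overline{\alpha}$ small relative to the gap between $W$ and $\partial(U\cap f^{-1}(U))$ closes the induction (the paper phrases this as a contradiction with the first exit time $k_0$, which is exactly how your ``self-referential loop'' is resolved). The proposal is correct and essentially identical in approach.
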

\begin{proof}
	\begin{enumerate}[leftmargin=*, itemindent=0.6cm, itemsep=0.2cm]
		\item[(1)]
		      Take $W$ so small that $W \subset U \cap f^{-1}(U)$.
		      There exists $\e > 0$ such that the $\e$-neighborhood $B(W, \e)$ satisfies $B(W, \e) \subset U \cap f^{-1}(U)$.
		      Then for all $x \in W$ and $y \in \B(x, \alpha, \beta)$ with $\alpha, \beta \in (0, \overline{\alpha})$, by \eqref{eq.constofequivmetric},
		      \begin{align}
			      \dist(x, y)
			       & \leq \dist(x, [x, y]) + \dist([x, y], y)           \\
			       & \leq C_0 d^u_x(x, [x, y]) +  C_0 d^s_y([x, y], y)  \\
			       & \leq C_0 \overline{\alpha} + C_0 \overline{\alpha}
			      = 2C_0 \overline{\alpha},\label{eq.distxy}
		      \end{align}
		      where $C_0$ is the constant given by \eqref{eq.constofequivmetric}.
		      Take $\overline{\alpha}$ so small that $\overline{\alpha} < \e/(2C_0)$.
		      Then we obtain $y \in B(x, \e) \subset U \cap f^{-1}(U)$, from which (1) follows.
		\item[(2)]
		      For sufficiently small $r > 0$, if $z, w \in U$ satisfy $\dist(z, w) \leq r$, then $\widetilde{W}^u(z) \cap \widetilde{W}^s(w)$ consists of a point $[z, w]$ and the map $(z, w) \mapsto [z, w]$ is continuous.
		      Since the set $\{ (x, y) \in W \times U ; \dist(x, y) \leq r \}$ is compact for sufficiently small $r > 0$, the map $(x, y) \mapsto [x, y]$ is uniformly continuous. 
		      Let us prove that $\dist(x, y) \leq r/2$ if $\widetilde{W}^u(x, \overline{\alpha}) \cap \widetilde{W}^s(y, \overline{\alpha}) \neq \emptyset$.
		      Take $\overline{\alpha} < r/(4C_0)$.
		      Then, as in \eqref{eq.distxy}, we can prove that $\dist(x, y) \leq 4C_0\overline{\alpha} < r/2$ if $\widetilde{W}^u(x, \overline{\alpha}) \cap \widetilde{W}^s(y, \overline{\alpha}) \neq \emptyset$.
		      Now the conclusion follows from the uniform continuity of the map $(x, y) \mapsto [x, y]$.
		\item[(3)]
		      If $z, w \in U$ satisfy
		      \[
			      z \in \widetilde{W}^s(w, \overline{\alpha}),
		      \]
		      then \eqref{eq.constofequivmetric} implies $\dist(z, w) < C_0 \overline{\alpha}$.
		      Thus, if $\overline{\alpha} > 0$ and a neighborhood $W$ of $\Lambda$ has been chosen sufficiently small, then
		      \begin{equation}
			      w \in U \cap f^{-1}(U)\label{eq.locstableisinside}
		      \end{equation}
		      when $z \in W$ and $w \in U$ satisfy $z \in \widetilde{W}^s(w, \overline{\alpha})$.
		      Making $W$ smaller if necessary, we may also assume that if $z \in W$ satisfies $f^k(z) \in W$ for all $k \geq 0$, then $z \in W^s(p, \overline{\alpha})$ for some $p \in \Lambda$.
		      
		      Let $x$, $y$, $q$ be the point in the statement.
		      Assume to the contrary that there exists $k_0 \geq 0$ such that
		      \begin{equation}
			      f^k(y) \in U, \quad 0 \leq k \leq k_0,\label{eq.defofk0}
		      \end{equation}
		      and $f^{k_0+1}(y) \notin U$.
		      It follows from \eqref{eq.defofk0} that $f^{k_0}(x) \in \widetilde{W}^s(f^k(y), \lambda^{k_0}\overline{\alpha})$.
		      By the choice of $\overline{\alpha}$ and the fact that $f^{k_0}(x) \in W$,
		      we may apply \eqref{eq.locstableisinside} with $(z, w)$ replaced by $(f^{k_0}(x), f^{k_0}(y))$ to obtain
		      \[
			      f^{k_0}(y) \in U \cap f^{-1}(U).
		      \]
		      This contradicts $f^{k_0+1}(y) \notin U$,
		      so we deduce that
		      \[
			      f^k(y) \in U, \quad k \geq 0.
		      \]
		      Then we have
		      $f^n(x) \in \widetilde{W}^s(f^n(y), \lambda^n\overline{\alpha})$ for all $n \geq 0$.
		      Since $x \in W^s(q, \overline{\alpha})$, we see that
		      \[
			      \dist(f^n(p), f^n(x)), \dist(f^n(x), f^n(y)) \to 0
		      \]
		      as $n \to \infty$
		      and therefore $y \in W^s(p, C)$ for some $C > 0$, proving (3).
	\end{enumerate}
\end{proof}
If $f$ is an Axiom A diffeomorphism,
then $\Omega(f)$ can be decomposed into
\[
	\Omega(f) = \Lambda_1 \sqcup \Lambda_2 \sqcup \cdots \sqcup \Lambda_s,
\]
where $\Lambda_1$, $\Lambda_2$, $\ldots$, $\Lambda_s$ are disjoint, compact, $f$-invariant locally maximal hyperbolic sets.

In what follows, we suppose that disjoint neighborhoods $U_i$ of $\Lambda_i$ for all $i = 1, \ldots, s$ satisfy above-mentioned properties of $U$ as $U_i=U$ and $\Lambda_i= \Lambda$.
In addition, neighborhoods $W_i$, $i = 1, 2, \ldots s$, and the numbers $\alpha, \beta$ are chosen so that Claim \ref{claim.condonWi} hold as $W_i = W$.

Let $d$ be a non-negative integer and $D_i$, $i = 1, 2$, be a compact set homeomorphic to a $d$-dimensional closed ball.
Then a continuous map $h : D_1 \to D_2$ is said to be {\it $d$-nontrivial} when one of the following properties holds:
\begin{itemize}[leftmargin=0.4cm, itemindent=0cm ]
	\item $d = 0$;
	\item $d > 0$ and $h(\partial D_1) \subset \partial D_2$, and the induced homomorphism of the reduced homology groups $(h \vert_{\partial D_1})_* :\widetilde{H}_{d-1}(\partial D_1) \to \widetilde{H}_{d-1}(\partial D_2)$ is nontrivial.
\end{itemize}

We denote by $D^k$ the closed unit ball in $\R^k$.
\begin{claim}\label{claim.dnontrivialissurjective}
	If $h : D_1 \to D_2$ is $d$-nontrivial, then $h$ is surjective.
\end{claim}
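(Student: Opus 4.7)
The plan is to argue by contradiction: suppose $h$ is not surjective and pick $y\in D_2\setminus h(D_1)$. The case $d=0$ is trivial since both $D_1$ and $D_2$ are singletons, so any map between them is surjective; I therefore assume $d>0$. The strategy is to show that under the assumption $y\notin h(D_1)$, the induced map $(h\vert_{\partial D_1})_*\colon \widetilde{H}_{d-1}(\partial D_1)\to \widetilde{H}_{d-1}(\partial D_2)$ must be trivial, contradicting $d$-nontriviality. By invariance of domain applied to the defining homeomorphism of $D_2$ with a ball, the interior and boundary of $D_2$ are well-defined, and I split according to where the missed point $y$ lies.

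If $y$ lies in the interior of $D_2$, then radial projection from $y$ provides a retraction $r\colon D_2\setminus\{y\}\to \partial D_2$ which is the identity on $\partial D_2$. Since $h(D_1)\subset D_2\setminus\{y\}$, the composition $r\circ h\colon D_1\to \partial D_2$ is a continuous extension of $h\vert_{\partial D_1}$ to all of $D_1$, so on passing to $\widetilde{H}_{d-1}$ the map $(h\vert_{\partial D_1})_*$ factors through $\widetilde{H}_{d-1}(D_1)=0$. If instead $y$ lies on $\partial D_2$, then $h\vert_{\partial D_1}$ takes values in $\partial D_2\setminus\{y\}$, which is homeomorphic to $S^{d-1}$ with a point removed and hence contractible; thus $(h\vert_{\partial D_1})_*$ factors through $\widetilde{H}_{d-1}(\partial D_2\setminus\{y\})=0$ and is again trivial. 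Either conclusion contradicts $d$-nontriviality, so $h$ must be surjective.

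The argument is short and each case reduces to a standard factoring-through-a-contractible-space trick, so I do not expect a substantial obstacle. The only mildly subtle points are ensuring that the interior/boundary dichotomy is intrinsically well-defined for a topological ball $D_2$ (which uses invariance of domain) and that radial projection from an interior point really produces a continuous retraction onto the boundary; both are standard.
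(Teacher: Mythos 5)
Your proof is correct and follows essentially the same retraction-and-factoring argument as the paper; the only organizational difference is that the paper first deduces $h(\partial D_1)=\partial D_2$ from nontriviality (so the missed point must be interior), while you handle the boundary case separately by factoring through the contractible space $\partial D_2\setminus\{y\}$. Both routes are standard and equivalent.
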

\begin{proof}
	When $d = 0$, there is nothing to prove.
	Consider the case where $d > 0$.
	Since the induced homomorphism $(h \vert_{\partial D_1})_* : \widetilde{H}_{d-1}(\partial D_1) \to \widetilde{H}_{d-1}(\partial D_2)$ is nontrivial, we have $h(\partial D_1) = \partial D_2$.
	
	Assume to the contrary that $h$ is not surjective.
	Then there is $x_0 \in D_2 \setminus h(D_1) \subset {\rm Int}D_2$.
	Let $r : D_2 \setminus \{x_0\} \to \partial D_2$ be a retraction.
	Define a continuous map $\hat{h} : D_1 \to \partial D_2$ by $\hat{h} = r \circ h$.
	Since $D_1$ is contractive, $\hat{h}_* : \widetilde{H}_{d-1}(D_1) \to \widetilde{H}_{d-1}(\partial D_2)$ is trivial.
	If $i : \partial D_1 \to D_1$ denotes the inclusion, then we have
	\[
		(h \vert_{\partial D_1})_*  = (\hat{h}\vert_{\partial D_1})_* = \hat{h}_* \circ i_*.
	\]
	This contradicts the fact that $(h \vert_{\partial D_1})_*$ is nontrivial.
\end{proof}
\begin{lem}\label{lem.nontrivial}$\,$
	\begin{enumerate}[leftmargin=0.8cm, itemindent=0cm]
		\item[$(1)$] If $h : D_1 \to D_2$ and $g : D_2 \to D_3$ are $d$-nontrivial, then $g \circ h : D_1 \to D_3$ is a $d$-nontrivial map.
		\item[$(2)$] If $h : D_1 \to D_2$ and $g : D_3 \to D_4$ are $d$-nontrivial and $e$-nontrivial, respectively, then $h \times g : D_1 \times D_3 \to D_2 \times D_4$ is a $(d+e)$-nontrivial map.
	\end{enumerate}
\end{lem}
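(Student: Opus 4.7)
The plan is to reformulate ``$d$-nontrivial'' as a condition on the relative homology group $H_d(D,\partial D)$, and then derive both parts by applying naturality and the K\"unneth theorem. When the relevant dimension equals zero both parts are immediate (in (2), one factor is a point, so $h\times g$ is essentially the other map up to homeomorphism), so I would assume $d,e\ge 1$ throughout the main argument.

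The reformulation goes as follows. For each closed $d$-ball $D$ with $d\ge 1$, the long exact sequence of the pair $(D,\partial D)$ combined with the contractibility of $D$ shows that the connecting homomorphism
\[
\partial_*:H_d(D,\partial D)\xrightarrow{\ \cong\ } \widetilde{H}_{d-1}(\partial D)
\]
is an isomorphism, and each of these groups is $\Z$. Hence, by naturality, a map $h:D_1\to D_2$ satisfying $h(\partial D_1)\subset \partial D_2$ is $d$-nontrivial if and only if $h_*:H_d(D_1,\partial D_1)\to H_d(D_2,\partial D_2)$ is multiplication by a nonzero integer.

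For part (1), the inclusion $(g\circ h)(\partial D_1)\subset g(\partial D_2)\subset \partial D_3$ is immediate, and under the reformulation $(g\circ h)_* = g_*\circ h_*$ is a composition of two nonzero endomorphisms of $\Z$, so it is nonzero. For part (2), the set identity
\[
\partial(D_1\times D_3) = (\partial D_1\times D_3)\cup(D_1\times \partial D_3)
\]
gives $(h\times g)(\partial(D_1\times D_3))\subset \partial(D_2\times D_4)$, and the K\"unneth theorem for the pairs $(D_i,\partial D_i)$ provides a natural isomorphism
\[
H_d(D_1,\partial D_1)\otimes H_e(D_3,\partial D_3)\xrightarrow{\ \cong\ } H_{d+e}\bigl(D_1\times D_3,\,\partial(D_1\times D_3)\bigr),
\]
with no Tor term since the two factors are free of rank one. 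Under this isomorphism $(h\times g)_*$ corresponds to $h_*\otimes g_*$, which is a nonzero endomorphism of $\Z$ because each factor is. Applying the reformulation in reverse shows that $h\times g$ is $(d+e)$-nontrivial. The only point requiring care is the K\"unneth identification for a product of pairs, but since only nontriviality (not a particular degree) is needed, sign and orientation issues play no role.
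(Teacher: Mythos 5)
Your argument is correct, and for part (2) it takes a genuinely different route from the paper. For part (1) the two proofs are essentially identical: both reduce to the observation that a nontrivial endomorphism of $\Z$ is injective (you phrase this on $H_d(D,\partial D)$, the paper on $\widetilde{H}_{d-1}(\partial D)$; the connecting isomorphism makes these the same statement). For part (2), however, the paper stays entirely in the absolute reduced homology of the boundary spheres: it factors $h\times g$ as $(h\times \mathrm{id})\circ(\mathrm{id}\times g)$, and for each factor runs a Mayer--Vietoris argument on a hand-built cover $A=\partial D^d\times \mathrm{Int}\,D^e$, $B=(D^d\times\partial D^e)\cup(\partial D^d\times(D^e\setminus\{0\}))$ of $\partial(D^d\times D^e)$, pushing the nontriviality down to $\widetilde{H}_{d+e-2}(A\cap B)$ where an absolute K\"unneth-type statement (Hatcher, Corollary 3B.7) applies. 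Your proof instead passes to relative homology once and for all, where the long exact sequence of $(D,\partial D)$ converts $d$-nontriviality into nonvanishing of $h_*$ on $H_d(D,\partial D)\cong\Z$, and then the relative K\"unneth theorem for CW pairs plus naturality of the cross product gives $(h\times g)_*=h_*\otimes g_*$ in one step. Your route is shorter and more conceptual, and it dispenses with both the factorization trick and the ad hoc cover; the price is invoking the relative K\"unneth isomorphism for the pair $\bigl(D_1\times D_3,\partial(D_1\times D_3)\bigr)$, a heavier (though entirely standard) tool than the absolute statement the paper uses. The points you would need to make explicit in a full write-up --- that $(D,\partial D)$ is a CW pair so the relative K\"unneth applies, that $H_*(D,\partial D)$ is $\Z$ concentrated in degree $d$ so the Tor term vanishes, and that the cross product is natural for arbitrary continuous maps of pairs --- are all routine, so I see no gap.
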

\begin{proof}
	\begin{enumerate}[leftmargin=*, itemindent=0.6cm, itemsep=0.2cm]
		\item When $d = 0$, there is nothing to prove.
		      Consider the case where $d > 0$.
		      Then
		      $g \circ h(\partial D_1) \subset g(\partial D_2) \subset \partial D_3$. Since $\widetilde{H}_{d-1}(\partial D_i) \cong \Z$ for $i = 1, 2, 3$, the fact that $(h \vert_{\partial D_1})_*$ and $(g \vert_{\partial D_2})_*$ are nontrivial is equivalent to the injectivity of these maps.
		      Thus, $(g \circ h \vert_{\partial D_1})_* = (g \vert_{\partial D_2})_* \circ (h \vert_{\partial D_1})_*$ is injective, and hence nontrivial.
		\item When $d = 0$, then we may identify $D_1 \times D_3$ with $D_3$, and $D_2 \times D_4$ with $D_4$, respectively. Thus, the map $g \times h$ can be identified with $h$, and is $e$-nontrivial. Similar argument holds for the case $e = 0$.
		      
		      Assume that $e, d > 0$. We may regard $D_1 = D_2 = D^d$ and $D_3 = D_4 = D^e$.
		      From (1) and the fact that $g \times h = (g \times {\rm id}_{D^e}) \circ ({\rm id}_{D^d} \times h)$, it is enough to prove that $g \times {\rm id}_{D^e}$ and ${\rm id}_{D^d} \times h$ are $(d+e)$-nontrivial.
		      Let us consider the former case.
		      Denote by $F : \partial (D^d \times D^e) \to  \partial (D^d \times D^e)$ the map $g \times {\rm id}_{D^e} \vert_{ \partial (D^d \times D^e)}$.
		      Let
		      \[
			      A = \partial D^d \times {\rm Int}D^e
		      \]
		      and
		      \[
			      B = (D^d \times \partial D^e) \cup \left( \partial D^d \times (D^e \setminus \{0\}^e) \right).
		      \]
		      Note that $F^{-1}(A) = A$ and $F^{-1}(B) = B$.
		      Then, by considering the Mayer-Vietoris sequence for  $A, B \subset \partial (D^d \times D^e)$, we have the following commutative diagram:
		      \[
			      \xymatrix{
			      \widetilde{H}_{d+e-1}(A) \oplus \widetilde{H}_{d+e-1}(B) \ar[r] \ar[d]_{F_* \oplus F_*}
			      &
			      \widetilde{H}_{d+e-1}(\partial (D^d \times D^e)) \ar@{^{(}-_>}[r] \ar[d]_{F_*} &
			      \widetilde{H}_{d+e-2}(A \cap B) \ar[d]_{F_*} \\
			      \widetilde{H}_{d+e-1}(A) \oplus \widetilde{H}_{d+e-1}(B) \ar[r] &
			      \widetilde{H}_{d+e-1}(\partial (D^{d} \times D^{e})) \ar@{^{(}-_>}[r]  \ar@{}[lu]|{\circlearrowright}& 
			      \widetilde{H}_{d+e-2}(A \cap B) \ar@{}[lu]|{\circlearrowright}
			      }
		      \]
		      Since $A \approx \partial D^{d}$ and $B \approx \partial D^{e}$, we obtain $\widetilde{H}_{d+e-1}(A) \oplus \widetilde{H}_{d+e-1}(B) \cong 0$.
		      Thus, the maps $\widetilde{H}_{d+e-1}(\partial (D^d \times D^e)) \to \widetilde{H}_{d+e-2}(A \cap B)$ written in the diagram are injective.
		      Since $A \cap B = \partial D^d \times ({\rm Int} D^e \setminus \{0\}^e)$, we have
		      $F \vert_{A \cap B} = g \vert_{\partial D^d} \times {\rm id}_{{\rm Int} D^e \setminus \{0\}^e}$, which implies that the map $F_* : \widetilde{H}_{d+e-2}(A \cap B) \to \widetilde{H}_{d+e-2}(A \cap B)$ is nontrivial (see \cite[Corollary 3B.7]{HatcherAlgTop}).
		      Thus, 
		      \[
			      \widetilde{H}_{d+e-1}(\partial (D^d \times D^e)) \to \widetilde{H}_{d+e-2}(A \cap B) \to \widetilde{H}_{d+e-2}(A \cap B)
		      \]
		      is injective.
		      The commutativity shows that $F_* : \widetilde{H}_{d+e-1}(\partial (D^d \times D^e)) \to \widetilde{H}_{d+e-1}(\partial (D^d \times D^e))$ is injective, and hence nontrivial.
		      The proof for the later case is similar.
	\end{enumerate}
\end{proof}

Let $0 \leq m_0 \leq m$ and take $x \in U_i$ to satisfy $u = \dim \widetilde{W}^u(x) \leq m_0$.
For sufficiently small $\alpha, \beta > 0$, consider the following properties for pairs of an embedded $m_0$-dimensional closed ball $D \subset M$ and a continuous map $h \times \eta : D \to \widetilde{W}^u(x, \beta) \times D^{m_0-u}$;
\begin{enumerate}
	\item[{\bf(P1)}] $h \times \eta : D \to \widetilde{W}^u(x, \beta) \times D^{m_0-u}$ is $m_0$-nontrivial (note that $\widetilde{W}^u(x, \beta) \times D^{m_0-u}$ is homeomorphic to an $m_0$-dimensional closed ball).
	\item[{\bf(P2)}] For all $y \in (h \times \eta)^{-1} \bigl( {\rm Int} (\widetilde{W}^u(x, \beta) \times D^{m_0-u}) \bigr)$, we have
	      \[
		      h(y) = [x, y] \in \widetilde{W}^s(y, \alpha) \cap \widetilde{W}^u(x, \beta).
	      \]
\end{enumerate}
\begin{dfn}
	Denote by $\D(\alpha, \widetilde{W}^u(x, \beta) \times D^{m_0-u})$ the set consisting of pairs of an embedded $m_0$-dimensional closed ball $D \subset M$ and a continuous map $h \times \eta : D \to \widetilde{W}^u(x, \beta) \times D^{m_0-u}$ satisfying {\bf(P1)} and {\bf(P2)}.
\end{dfn}
\begin{dfn}
	For $(D, h \times \eta) \in \D(\alpha, \widetilde{W}^u(x, \beta) \times D^{m_0-u})$, we set
	\[
		V^u(h \times \eta)
		= (h \times \eta)^{-1} \bigl( {\rm Int} (\widetilde{W}^u(x, \beta) \times D^{m_0-u}) \bigr).
	\]
\end{dfn}
Note that $D$ is a disjoint union of
\[
	(h \times \eta)^{-1} \bigl( {\rm Int} (\widetilde{W}^u(x, \beta) \times D^{m_0-u}) \bigr) \text{ and } (h \times \eta)^{-1} \bigl( \partial (\widetilde{W}^u(x, \beta) \times D^{m_0-u}) \bigr).
\]
This implies that we have either $(h \times \eta)(y) \in \partial (\widetilde{W}^u(x, \beta) \times D^{m_0-u})$ or $h(y) = [x, y]$ for all $y \in D$.
When it is not necessary to specify the continuous map $h \times \eta$, we just write $D \in \D(\alpha, \widetilde{W}^u(x, \beta) \times D^{m_0-u})$ instead of $(D, h \times \eta) \in \D(\alpha, \widetilde{W}^u(x, \beta) \times D^{m_0-u})$.
\begin{figure}[h]
	\includegraphics[width=12cm]{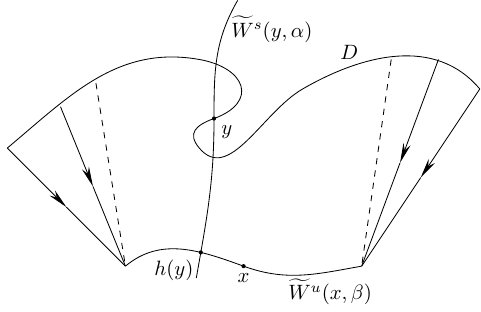}
	\caption{$(D, h \times \eta) \in \D(\alpha, \widetilde{W}^u(x, \beta) \times D^{m_0-u})$ when $m_0-u=0$.}\label{figure.Dform0u0}
\end{figure}

\begin{figure}[h]
	\centering
	\begin{minipage}[b]{1\columnwidth}
		\centering
		\includegraphics[width=10cm]{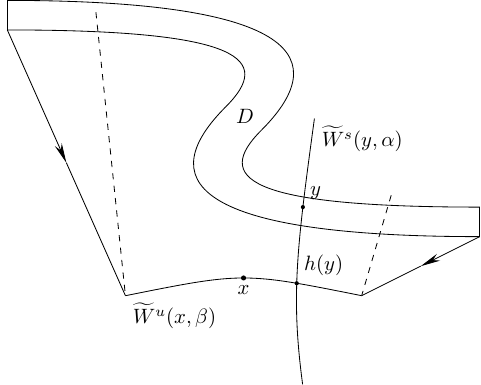}
		\subcaption{$h : D \to \widetilde{W}^u(x, \beta)$}\label{figure.Dforh}
		\vfill
		\centering
		\includegraphics[width=12cm]{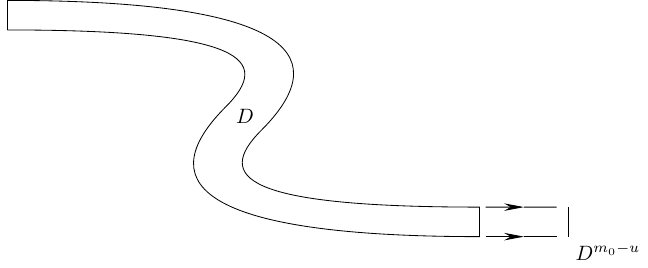}
		\subcaption{$\eta : D \to D^{m_0-u}$}\label{figure.Dforeta}
	\end{minipage}
	\caption{$(D, h \times \eta) \in \D(\alpha, \widetilde{W}^u(x, \beta) \times D^{m_0-u})$ when $m_0-u>0$.}\label{figure.Dform0upositive}
\end{figure}
\begin{rk}\label{rk.motivationofD(a(w(xb)))}
	Let $(D, h \times \eta) \in \D(\alpha, \widetilde{W}^u(x, \beta) \times D^{m_0-u})$. Then, by continuity of $h$ and {\bf (P2)}, we have
	\[
		h(y) = [x, y] \in \widetilde{W}^s(y, \alpha) \cap \widetilde{W}^u(x, \beta)
	\]
	for all $y \in \overline{V^u(h \times \eta)}$, which is
	\[
		\overline{(h \times \eta)^{-1} \bigl( {\rm Int} (\widetilde{W}^u(x, \beta) \times D^{m_0-u}) \bigr)}.
	\]
	It follows from Claim \ref{claim.dnontrivialissurjective} that the map $h \times \eta$ is surjective.
	Thus, there exists $y \in D$ such that $h(y) = z \in \widetilde{W}^s(y, \alpha)$ for all $z \in \widetilde{W}^u(x, \beta)$, that is, every point in $\widetilde{W}^u(x, \beta)$ is ``$\alpha$-close'' to some point in $D$ (see Figures \ref{figure.Dform0u0} and \ref{figure.Dform0upositive}).
\end{rk}

\section{Homological inclination lemma}\label{sec.homologicalinclinationlemma}
It was proved by Morimoto \cite{morimoto1979method}, Sawada \cite{sawada1980extended} and Robinson \cite{R} that if an Axiom A diffeomorphism $f$ satisfies the strong transversality condition, then $f$ has the shadowing property.
Petrov and Pilyugin \cite{PETROV2015} proved that an Axiom A diffeomorphism satisfying the $C^0$-transversality condition on a closed surface also satisfies the shadowing property, using a strategy similar to \cite{R}. In this paper, we follow that approach and prove Theorem \ref{thm.homologyshadowing} by using the homological version of the inclination lemma.
In this section, 
we give several lemmas to prove the homological inclination lemma.
The classical inclination lemma asserts that if a disk $D$ is transverse to a stable manifold of a hyperbolic fixed point, then $f^n(D)$ is $C^1$-close to a compact part of its unstable manifold for sufficiently large $n$.
Here, we consider the $T$-condition instead of the transversality, and $D \in \D(\alpha, \widetilde{W}^u(x, \beta) \times D^k)$ instead of a transversal disk $D$.

In the following lemma, we prove that if a stable semi-invariant disk and an embedded closed ball $D$ satisfy the $T$-condition, then $D \in \D(\alpha, \widetilde{W}^u(x, \beta) \times D^k)$.
\begin{lem}\label{lem.C0transstarimplyeclosed}
	Let $x \in W_i$, $1 \leq i \leq s$, satisfy $f^k(x) \in W_i$ for all $k \geq 0$,
	and let $u = \dim\widetilde{W}^u(x)$. For $\overline{\alpha}$ given in Claim \ref{claim.condonWi}, let $\alpha \in (0, \overline{\alpha})$.
	Suppose that 
	$\widetilde{W}^s(x, \alpha/2)$ and an embedded closed ball $W \subset M$ with dimension $d$ satisfy the $T$-condition at some point $p$.
	Then, for all $0 \leq k \leq d-u$,
	there exists $\beta_1 > 0$ and an embedded $(u+k)$-dimensional closed ball $D \subset W$ centered at $p$ and a continuous map $h \times \eta : D \to \widetilde{W}^u(x, \beta_1) \times D^{k}$ such that $(D, h \times \eta) \in \D(\alpha, \widetilde{W}^u(x, \beta_1) \times D^{k})$.
\end{lem}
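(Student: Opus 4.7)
First, by Claim~\ref{claim.condonWi}(2), for a suitably small $\beta_0 > 0$, the map $\Phi(s, u) := [s, u]$ is a homeomorphism from a neighborhood of $(x, x)$ in $\widetilde{W}^s(x, \alpha/2) \times \widetilde{W}^u(x, \beta_0)$ onto a neighborhood of $x$ in $M$. In this chart, $h_0(y) := [x, y]$ equals $\pi_u \circ \Phi^{-1}(y)$, where $\pi_u$ denotes the projection onto the $\widetilde{W}^u$-factor. Since $(\widetilde{W}^s(x, \alpha/2), W)$ is a $T$-pair at $p$, taking neighborhoods of $p$ small enough, the $T$-condition furnishes a local chart $\varphi$ straightening $\widetilde{W}^s(x, \alpha/2)$ and an embedded $u$-ball $B_u \subset W$ centered at $p$ and contained in the image of $\Phi$. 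Because both $\varphi$ and $\Phi^{-1}$ flatten $\widetilde{W}^s(x, \alpha/2)$ near $p$, the homotopy equivalences $(-1,1)^m \setminus ((-1,1)^{m-u} \times \{0\}^u) \simeq \widetilde{W}^u(x, \beta_0) \setminus \{x\} \simeq S^{u-1}$ translate the nontriviality in the $T$-condition into the assertion that $h_0|_{\partial B_u} : \partial B_u \to \widetilde{W}^u(x, \beta_0) \setminus \{x\}$ is nontrivial on $\widetilde{H}_{u-1}$.

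For the case $k = 0$, I would set $D := B_u$ and $h := \rho \circ h_0$, where $\rho : \widetilde{W}^u(x, \beta_0) \to \widetilde{W}^u(x, \beta_1)$ is the retraction that is the identity on $\widetilde{W}^u(x, \beta_1)$ and radially retracts the exterior onto $\partial \widetilde{W}^u(x, \beta_1)$. Choosing $0 < \beta_1 < \min_{y \in \partial B_u} d^u_x(h_0(y), x)$ (positive since $h_0(\partial B_u)$ is compact in $\widetilde{W}^u(x, \beta_0) \setminus \{x\}$), the map $h$ sends $\partial B_u$ into $\partial \widetilde{W}^u(x, \beta_1)$. Property {\bf(P2)} is immediate since $\rho$ is the identity on $\widetilde{W}^u(x, \beta_1)$, and $u$-nontriviality of $h|_{\partial B_u}$ follows because $\rho$ restricted to $\widetilde{W}^u(x, \beta_0) \setminus \Int \widetilde{W}^u(x, \beta_1)$ is a homotopy equivalence onto $\partial \widetilde{W}^u(x, \beta_1)$.

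For $k > 0$, I would thicken $B_u$ inside $W$. Using a local chart $\psi : V_W \to \R^d$ of $W$ near $p$, after shrinking $B_u$ so $B_u \subset V_W$ and choosing coordinates on $\R^d$ in which $B_u$ is locally modeled on $\R^u \times \{0\}^{d-u}$, set $D := \Psi(B_u \times [-\delta, \delta]^k)$ for small $\delta > 0$, where $\Psi(y, b) := \psi^{-1}(\psi(y) + (0_u, b, 0_{d-u-k}))$. Define $\eta(\Psi(y, b)) := b$ and $h := \rho \circ h_0$. The homotopy $H_t(\Psi(y, b)) := \rho(h_0(\Psi(y, tb)))$ interpolates between $h$ (at $t = 1$) and a map that depends only on $y$ (at $t = 0$); the latter, together with $\eta$, is the product of the $k = 0$ construction with the identity on $[-\delta, \delta]^k$, hence $(u+k)$-nontrivial by Lemma~\ref{lem.nontrivial}(2). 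For $\delta$ small enough, continuity and compactness guarantee that $H_t \times \eta$ sends $\partial D$ into $\partial(\widetilde{W}^u(x, \beta_1) \times D^k)$ uniformly for $t \in [0, 1]$, so $h \times \eta$ is $(u+k)$-nontrivial.

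The principal obstacle I anticipate is in the $k > 0$ case: realizing the thickening as a genuine embedded ball in $W$ requires a local-flatness / tubular-neighborhood structure for $B_u$ in $W$, which in the purely $C^0$ setting may require a preliminary approximation of $B_u$ by a locally flat ball still satisfying the $T$-condition. In addition, the homotopy must be calibrated, via compactness and a careful choice of $\delta$ and $\beta_1$, so that the boundary condition $\partial D \to \partial(\widetilde{W}^u(x, \beta_1) \times D^k)$ holds uniformly in $t$.
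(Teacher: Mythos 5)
Your proposal is correct and follows essentially the same route as the paper: extract the $u$-ball from the $T$-condition, use $y\mapsto[x,y]$ (the unstable coordinate of the local product structure) composed with a retraction onto $\widetilde{W}^u(x,\beta_1)$ for a $\beta_1$ chosen so that the image of the boundary is avoided, then thicken inside $W$ and invoke Lemma \ref{lem.nontrivial}(2) for the $(u+k)$-nontriviality. The only omissions are minor: the degenerate case $u=0$ (handled in the paper by a constant map $h\equiv x$ on a $k$-ball in $W\cap\widetilde{W}^s(x,\alpha)$) and the verification that $[x,y]$ lands in $\widetilde{W}^s(y,\alpha)$ on the whole thickened ball, which the paper arranges via the $\alpha/2$ margin and Claim \ref{claim.condonWi}(2); the local-flatness point you raise is likewise present (and silently assumed) in the paper's choice of the homeomorphism $\tau:D_0\times D^k\to U$.
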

\begin{proof}
	When $u = 0$, for $0 \leq k \leq d$ and an embedded $k$-dimensional closed ball $D \subset W \cap \widetilde{W}^s(x, \alpha)$ centered at $p$,
	take a constant map $h : D \to \{x\}$ and any homeomorphism $\eta : D \to D^{k}$. Then we have $(D, h \times \eta) \in \D(\alpha, \widetilde{W}^u(x, \beta_1) \times D^{k})$.
	
	When $u > 0$,
	use Claim \ref{claim.condonWi} (3) to have $z \in \Lambda_i$ with $x \in W^s(z, \overline{\alpha})$.
	Let $C > 0$ be the constant given in Claim \ref{claim.condonWi}.
	Since $W^s(z, C)$ is an embedded closed ball, 
	there exist a neighborhood $U \subset \B(x, \alpha, \beta)$ of $p$ and a diffeomorphism $\phi : U \to (-1, 1)^m$ such that
	\begin{equation}
		\phi(W^s(z, C) \cap U) = (-1, 1)^{m-u} \times \{0\}^u.\label{eq.defofU}
	\end{equation}
	By the $T$-condition, there exists an embedded $u$-dimensional closed ball $D_0 \subset W$ centered at $p$ satisfying the second item of Definition \ref{dfn.TUaUbcondition} such that
	\[
		\widetilde{W}^u(x, \beta/2) \cap \widetilde{W}^s(y, 3\alpha/4) = \{[x, y]\} \neq \emptyset, \quad y \in D_0.
	\]
	Since $D_0$ is an embedded closed ball, there is a homeomorphism $\tau : D_0 \times D^k \to U$ onto its image such that $\tau(y, 0) = y$ for all $y \in D_0$ and
	\[
		\widetilde{W}^u(x, \beta) \cap \widetilde{W}^s(y, \alpha) = \{[x, y]\} \neq \emptyset, \quad y \in \tau(D_0 \times D^k).
	\]
	
	Let $f_0 : U \setminus W^s(z, C) \to \widetilde{W}^u(x, \beta)$ be a continuous map satisfying $f_0(y) = [x, y]$.
	Then we have $f_0(y) \neq x$ for all $y \in U \setminus W^s(z, C)$.
	In fact, if $y \in U$ satisfies $f_0(y) = x$, then $x \in \widetilde{W}^s(y, \alpha)$.
	By Claim \ref{claim.condonWi} (3), $y \in \widetilde{W}^s(z, C)$.
	Therefore, there exist $t_0 \in (0, 1)$ and $\beta_1 \in (0, \beta)$ such that
	\begin{equation}
		f_0(\tau(\partial D_0 \times D^k(t_0))) \cap \widetilde{W}^u(x, \beta_1) = \emptyset,\label{eq.conditiononf0}
	\end{equation}
	where $D^k(t_0) \subset \R^k$ is a closed ball with radius $t_0$. Let $r : \widetilde{W}^u(x, \beta_0) \to \widetilde{W}^u(x, \beta_1)$ be a $u$-nontrivial retraction such that
	\begin{equation}
		r(\widetilde{W}^u(x, \beta_0) \setminus \widetilde{W}^u(x, \beta_1)) = \partial \widetilde{W}^u(x, \beta_1).\label{eq.rrDpartial}
	\end{equation}
	Let $\hat{D} = D_0 \times D^k(t_0)$ and define $\hat{h} \times \hat{\eta} : \hat{D} \to \widetilde{W}^u(x, \beta_1) \times D^k$ by
	\[
		\hat{h} \times \hat{\eta}(y, v) = (r \circ f_0\circ \tau(y, v), 1/t_0 \cdot v).
	\]
	Using \eqref{eq.rrDpartial} and \eqref{eq.conditiononf0}, we get
	\begin{align}
		\hat{h} \times \hat{\eta}(\partial \hat{D})
		 & = \hat{h} \times \hat{\eta}(\partial D_0 \times D^k(t_0))
		\cup \hat{h} \times \hat{\eta}(D_0 \times \partial D^k(t_0))                           \\
		 & \subset \partial \widetilde{W}^u(x, \beta_1) \times D^k
		\cup \widetilde{W}^u(x, \beta_1) \times \partial D^k                                   \\
		 & \subset \partial (\widetilde{W}^u(x, \beta_1) \times D^k).\label{eq.inclusionfor31}
	\end{align}
	In addition, $\hat{h} \times \hat{\eta} \vert_{\partial \hat{D}}$ is homotopic to
	\[
		\partial \hat{D} \ni (y, v) \mapsto (r \circ f_0\circ \tau(y, 0), 1/t_0 \cdot v) \in \partial (\widetilde{W}^u(x, \beta_1) \times D^k).
	\]
	By the choice of $r$ and the fact that $f_0 : U \setminus W^s(z, C) \to \widetilde{W}^u(x, \beta)$ is a retraction, $r \circ f_0 \circ \tau(\cdot, 0) \vert_{\partial D_0}$ induces a nontrivial homomorphism
	\[
		(r \circ f_0 \circ \tau(\cdot, 0) \vert_{\partial D_0})_* : \widetilde{H}_{u-1}(\partial D_0) \to \widetilde{H}_{u-1}(\partial \widetilde{W}^u(x, \beta_1))
	\]
	between reduced homology groups.
	Thus, Lemma \ref{lem.nontrivial} (2) implies that $\hat{h} \times \hat{\eta}$ is $(u+k)$-nontrivial.
	Let $D = \tau(\hat{D})$ and define $h \times \eta = (\hat{h} \times \hat{\eta}) \circ \tau^{-1} : D \to \widetilde{W}^u(x, \beta_1) \times D^k$.
	Then, let us prove that $(D, h \times \eta) \in \D(\alpha, \widetilde{W}^u(x, \beta_1) \times D^k)$.
	Since $\hat{h} \times \hat{\eta}$ is $(u+k)$-nontrivial, $h \times \eta$ is also $(u+k)$-nontrivial.
	For $z \in (h \times \eta)^{-1}({\rm Int}(\widetilde{W}^u(x, \beta_1) \times D^k))$,
	since $h(z) = r \circ f_0 \circ \tau \circ \tau^{-1}(z) = r \circ f_0(z) \in {\rm Int}\widetilde{W}^u(x, \beta_1)$,
	using \eqref{eq.rrDpartial}, we obtain
	\[
		h(z) = r \circ f_0(z) = f_0(z) = [x, z].
	\]
\end{proof}

\begin{lem}\label{lem.basicofepsilonclosever2}
	Let $0 \leq u \leq m_0 \leq m$.
	Given $x \in U_i$, $1 \leq i \leq s$, with $\dim \widetilde{W}^u(x) = u$
	and $(D, h \times \eta) \in \D(\alpha, \widetilde{W}^u(x, \beta) \times D^{m_0-u})$,
	if $\beta' \in (0, \beta)$, then there exists $g : D \to \widetilde{W}^u(x, \beta')$ satisfying the following properties:
	\begin{enumerate}[leftmargin=0.8cm, itemindent=0cm]
		\item[$(1)$] $(D, g \times \eta) \in \D(\alpha, \widetilde{W}^u(x, \beta') \times D^{m_0-u})$;
		\item[$(2)$] $V^u(g \times \eta) \subset V^u(h \times \eta)$.
	\end{enumerate}
\end{lem}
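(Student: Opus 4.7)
The natural move is to shrink the unstable factor by post-composing $h$ with a radial retraction. Fix a retraction $r:\widetilde{W}^u(x,\beta)\to\widetilde{W}^u(x,\beta')$ that equals the identity on $\widetilde{W}^u(x,\beta')$ and collapses the shell $\widetilde{W}^u(x,\beta)\setminus\widetilde{W}^u(x,\beta')$ radially onto $\partial\widetilde{W}^u(x,\beta')$ (when $u=0$ both balls collapse to the point $\{x\}$ and $r$ is the identity). Set $g=r\circ h$, so that we have the product factorization
\[
g\times\eta \;=\; (r\times\mathrm{id}_{D^{m_0-u}})\circ(h\times\eta).
\]

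For properties (2) and (P2), I would argue as follows. If $y\in V^u(g\times\eta)$, then $g(y)=r(h(y))\in\mathrm{Int}\,\widetilde{W}^u(x,\beta')$; since $r$ sends every point of $\widetilde{W}^u(x,\beta)\setminus\widetilde{W}^u(x,\beta')$ to $\partial\widetilde{W}^u(x,\beta')$, this forces $h(y)\in\mathrm{Int}\,\widetilde{W}^u(x,\beta')\subset\mathrm{Int}\,\widetilde{W}^u(x,\beta)$, hence $y\in V^u(h\times\eta)$, which is exactly (2). Then (P2) for $(D,h\times\eta)$ yields $h(y)=[x,y]\in\widetilde{W}^u(x,\beta')$, and because $r$ is the identity on $\widetilde{W}^u(x,\beta')$ we get $g(y)=h(y)=[x,y]$, verifying (P2) for $(D,g\times\eta)$.

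For (P1), I first check that $(g\times\eta)(\partial D)\subset\partial(\widetilde{W}^u(x,\beta')\times D^{m_0-u})$: since $(h\times\eta)(\partial D)$ is contained in $(\partial\widetilde{W}^u(x,\beta)\times D^{m_0-u})\cup(\widetilde{W}^u(x,\beta)\times\partial D^{m_0-u})$, applying $r\times\mathrm{id}$ sends the first piece into $\partial\widetilde{W}^u(x,\beta')\times D^{m_0-u}$ and the second into $\widetilde{W}^u(x,\beta')\times\partial D^{m_0-u}$. To obtain $m_0$-nontriviality, I would invoke Lemma \ref{lem.nontrivial}: the retraction $r$ is $u$-nontrivial (when $u>0$ it restricts to a homeomorphism between the two boundary $(u{-}1)$-spheres; when $u=0$ the condition is vacuous), and $\mathrm{id}_{D^{m_0-u}}$ is $(m_0{-}u)$-nontrivial, so part~(2) of the lemma gives that $r\times\mathrm{id}_{D^{m_0-u}}$ is $m_0$-nontrivial. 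Combining this with the $m_0$-nontriviality of $h\times\eta$ via part~(1) yields (P1) for $g\times\eta$.

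The main obstacle is essentially bookkeeping rather than substance: once $r$ is chosen so that it is the identity on $\widetilde{W}^u(x,\beta')$, every verification reduces to tracking the two faces of $\partial(\widetilde{W}^u(x,\beta)\times D^{m_0-u})$ and applying Lemma \ref{lem.nontrivial}. The only point that would merit a line of care in the write-up is the equality $g(y)=[x,y]$ for $y\in V^u(g\times\eta)$, which depends crucially on having $r$ act as the identity on $\widetilde{W}^u(x,\beta')$ so that no drift in the stable-slice coordinate is introduced.
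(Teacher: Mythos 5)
Your proposal is correct and follows essentially the same route as the paper: post-compose $h$ with a $u$-nontrivial retraction $r$ collapsing the shell onto $\partial\widetilde{W}^u(x,\beta')$, factor $g\times\eta=(r\times\mathrm{id}_{D^{m_0-u}})\circ(h\times\eta)$, use Lemma \ref{lem.nontrivial} for {\bf (P1)}, and observe that $V^u(g\times\eta)=(h\times\eta)^{-1}(\mathrm{Int}(\widetilde{W}^u(x,\beta')\times D^{m_0-u}))$ to get {\bf (P2)} and the inclusion of $V^u$'s. The extra care you note about $r$ being the identity on $\widetilde{W}^u(x,\beta')$ is exactly the point the paper uses to conclude $r\circ h(y)=h(y)=[x,y]$.
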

\begin{proof}
	Let $r : \widetilde{W}^u(x, \beta) \to \widetilde{W}^u(x, \beta')$ be a $u$-nontrivial retraction such that
	\begin{equation}
		r(\widetilde{W}^u(x, \beta) \setminus \widetilde{W}^u(x, \beta')) \subset \partial \widetilde{W}^u(x, \beta').\label{eq.condonretract}
	\end{equation}
	Let $h \times \eta : D \to \widetilde{W}^u(x, \beta) \times D^{m_0-u}$ be a continuous map with $(D, h \times \eta) \in \D(\alpha, \widetilde{W}^u(x, \beta) \times D^{m_0-u})$. 
	Let us prove that $(r \circ h) \times \eta : D \to \widetilde{W}^u(x, \beta') \times D^{m_0-u}$ satisfies conditions {\bf (P1)} and {\bf (P2)} above (i.e., $(D, (r \circ h) \times \eta) \in \D(\alpha, \widetilde{W}^u(x, \beta') \times D^{m_0-u})$).
	Since $h \times \eta$ satisfies {\bf (P1)} and $r$ is $u$-nontrivial, Lemma \ref{lem.nontrivial} implies that the map $(r \circ h) \times \eta = (r \times {\rm id}_{D^{m_0-u}}) \circ (h \times \eta)$ satisfies {\bf (P1)}.
	By \eqref{eq.condonretract}, we have
	\begin{align}
		 & ((r \circ h) \times \eta)^{-1}\bigl({\rm Int} (\widetilde{W}^u(x, \beta') \times D^{m_0-u})\bigr)                                    \\
		 & = (h \times \eta)^{-1} \circ (r \times {\rm id}_{D^{m_0-u}})^{-1}\bigl({\rm Int} (\widetilde{W}^u(x, \beta') \times D^{m_0-u})\bigr) \\
		 & = (h \times \eta)^{-1}({\rm Int} (\widetilde{W}^u(x, \beta') \times D^{m_0-u})).\label{eq.incluofrheta}
	\end{align}
	Thus, by applying {\bf (P2)} for $h \times \eta$, we have
	\[
		r \circ h(y) = h(y) = [x, y] \in \widetilde{W}^s(y, \alpha) \cap \widetilde{W}^u(x, \beta')
	\]
	for all $y \in ((r \circ h) \times \eta)^{-1}\bigl({\rm Int} (\widetilde{W}^u(x, \beta') \times D^{m_0-u})\bigr)$.
	Thus, we see that $(D, (r \circ h) \times \eta) \in \D(\alpha, \widetilde{W}^u(x, \beta') \times D^{m_0-u})$.
	Moreover, it follows from \eqref{eq.incluofrheta} that
	\begin{align}
		V^u((r \circ h) \times \eta)
		=       & ((r \circ h) \times \eta)^{-1}({\rm Int}(\widetilde{W}^u(x, \beta') \times D^{m_0-u}))             \\
		=       & (h \times \eta)^{-1}({\rm Int} (\widetilde{W}^u(x, \beta') \times D^{m_0-u}))                      \\
		\subset & (h \times \eta)^{-1}({\rm Int}(\widetilde{W}^u(x, \beta) \times D^{m_0-u})) = V^u(h \times \eta).
	\end{align}
\end{proof}

In the following lemma, we prove that if $D$ is ``$\alpha$-close'' to $\widetilde{W}^u(x, \beta)$ in the sense of Remark \ref{rk.motivationofD(a(w(xb)))}, then $f(D)$ is ``$\lambda\alpha$-close'' to $\widetilde{W}^u(f(x), \beta/\lambda)$.
Combining this lemma and Lemma \ref{lem.C0transstarimplyeclosed}, we obtain a homology version of the inclination lemma.
\begin{lem}\label{lem.closeddiscchange}
	Let $0 \leq u \leq m_0 \leq m$.
	Suppose $x \in W_i$, $1 \leq i \leq s$, satisfies $\dim \widetilde{W}^u(x) = u$ and $(D, h \times \eta) \in \D (\alpha, \widetilde{W}^u(x, \beta) \times D^{m_0-u})$.
	Then there exists a continuous map $g \times \theta : f(D) \to \widetilde{W}^u(f(x), \beta/\lambda) \times D^{m_0-u}$ satisfying the following properties:
	\begin{enumerate}[leftmargin=0.8cm, itemindent=0cm]
		\item[$(a)$] $(f(D), g \times \theta) \in \D(\lambda\alpha, \widetilde{W}^u(f(x), \beta/\lambda) \times D^{m_0-u})$;
		\item[$(b)$] $V^u(g \times \theta) \subset f (V^u(h \times \eta))$.
	\end{enumerate}
\end{lem}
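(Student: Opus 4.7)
The plan is to push the domain through $f$, post-compose $h$ with $f$, and then project the resulting image back into the smaller unstable disc via a suitable retraction; the second factor is transported by the change of variable $f^{-1}$. Two invariance estimates are needed upfront: on the stable side, $f(\widetilde{W}^s(y, \alpha)) \subset \widetilde{W}^s(f(y), \lambda\alpha)$ from the $d^s$-contraction; on the unstable side, the dual inclusion
\[
    \widetilde{W}^u(f(x), \beta/\lambda) \subset f(\widetilde{W}^u(x, \beta)),
\]
which follows because $f^{-1}(\widetilde{W}^u(f(x))) \subset \widetilde{W}^u(x)$ and $f^{-1}$ contracts $d^u$ by the factor $\lambda$, so any $v \in \widetilde{W}^u(f(x), \beta/\lambda)$ satisfies $f^{-1}(v) \in \widetilde{W}^u(x, \beta)$. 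With this in hand, $f(\widetilde{W}^u(x, \beta))$ is an embedded closed $u$-ball containing $\widetilde{W}^u(f(x), \beta/\lambda)$, and I choose a $u$-nontrivial retraction $r \colon f(\widetilde{W}^u(x, \beta)) \to \widetilde{W}^u(f(x), \beta/\lambda)$ with
\[
    r\bigl(f(\widetilde{W}^u(x, \beta)) \setminus \widetilde{W}^u(f(x), \beta/\lambda)\bigr) \subset \partial \widetilde{W}^u(f(x), \beta/\lambda),
\]
so in particular $r^{-1}(\Int \widetilde{W}^u(f(x), \beta/\lambda)) = \Int \widetilde{W}^u(f(x), \beta/\lambda)$.

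Then I set $g(z) = r(f(h(f^{-1}(z))))$ and $\theta(z) = \eta(f^{-1}(z))$ for $z \in f(D)$. Property {\bf (P1)} follows by factoring
\[
    g \times \theta = \bigl(r \times \mathrm{id}_{D^{m_0-u}}\bigr) \circ \bigl(f \times \mathrm{id}_{D^{m_0-u}}\bigr) \circ (h \times \eta) \circ \bigl(f|_D\bigr)^{-1}
\]
and chaining Lemma \ref{lem.nontrivial}: the two homeomorphisms $(f|_D)^{-1}$ and $f \times \mathrm{id}$ are $m_0$-nontrivial as homeomorphisms between $m_0$-balls; $r \times \mathrm{id}$ is $m_0$-nontrivial by part (2) of the lemma since $r$ is $u$-nontrivial and $\mathrm{id}_{D^{m_0-u}}$ is $(m_0-u)$-nontrivial; $(h \times \eta)$ is $m_0$-nontrivial by hypothesis; and part (1) of the lemma composes these.

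For {\bf (P2)} and (b), take $z \in V^u(g \times \theta)$ and set $y = f^{-1}(z)$. The retraction property forces $f(h(y)) \in \Int \widetilde{W}^u(f(x), \beta/\lambda)$ and hence $h(y) \in \Int \widetilde{W}^u(x, \beta)$; together with $\eta(y) \in \Int D^{m_0-u}$ this places $y \in V^u(h \times \eta)$, proving (b). Applying {\bf (P2)} for $(h \times \eta)$ yields $h(y) = [x, y] \in \widetilde{W}^u(x) \cap \widetilde{W}^s(y)$, and then the inclusions $f(h(y)) \in \widetilde{W}^u(f(x))$ and $f([x, y]) \in f(\widetilde{W}^s(y, \alpha)) \subset \widetilde{W}^s(f(y), \lambda\alpha)$ let me identify $f([x, y]) = [f(x), f(y)] = [f(x), z]$, whence $g(z) = r(f(h(y))) = f(h(y)) = [f(x), z] \in \widetilde{W}^s(z, \lambda\alpha) \cap \widetilde{W}^u(f(x), \beta/\lambda)$. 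The main subtlety is precisely this identification $f([x, y]) = [f(x), f(y)]$: the bracket on the right only makes sense once one knows $f([x, y]) \in \widetilde{W}^u(f(x))$, and the retraction $r$ is designed so that this inclusion is automatic for $z \in V^u(g \times \theta)$.
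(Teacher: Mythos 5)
Your proposal is correct and follows essentially the same route as the paper: the same retraction $r$ from $f(\widetilde{W}^u(x,\beta))$ onto $\widetilde{W}^u(f(x),\beta/\lambda)$, the same maps $g = r\circ f\circ h\circ f^{-1}$ and $\theta = \eta\circ f^{-1}$, and the same observation that on $V^u(g\times\theta)$ the retraction acts as the identity so that {\bf (P2)} for $h\times\eta$ transfers with the contraction factor $\lambda$. You are in fact slightly more explicit than the paper about verifying {\bf (P1)} and about the identification $f([x,y])=[f(x),f(y)]$.
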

\begin{proof}
	Let $r : f(\widetilde{W}^u(x, \beta)) \to \widetilde{W}^u(f(x), \beta/\lambda)$ be a $u$-nontrivial retraction such that $r(f(\widetilde{W}^u(x, \beta)) \setminus \widetilde{W}^u(f(x), \beta/\lambda)) = \partial \widetilde{W}^u(f(x), \beta/\lambda)$. 
	Consider the maps
	\[
		g = r \circ f \circ h \circ f^{-1}\vert_{f(D)} : f(D) \to \widetilde{W}^u(f(x), \beta/\lambda)
	\]
	and
	\[
		\theta = \eta \circ f^{-1} : f(D) \to D^{m_0-u}.
	\]
	For the proof of property (a),
	note that
	\begin{align}
		 & (g \times \theta)^{-1}({\rm Int}(\widetilde{W}^u(f(x), \beta/\lambda) \times D^{m_0-u}))                                                  \\
		 & = f \circ h^{-1} \circ f^{-1} \circ r^{-1} ({\rm Int} \widetilde{W}^u(f(x), \beta/\lambda)) \cap \theta^{-1}( {\rm Int}D^{m_0-u})         \\
		 & = f \circ h^{-1}({\rm Int} f^{-1}(\widetilde{W}^u(f(x), \beta/\lambda))) \cap f \circ \eta^{-1}({\rm Int}D^{m_0-u})                       \\
		 & = f(h^{-1}({\rm Int} f^{-1}(\widetilde{W}^u(f(x), \beta/\lambda))) \cap \eta^{-1}({\rm Int}D^{m_0-u}))                                    \\
		 & = f \circ (h \times \eta)^{-1}({\rm Int} (f^{-1}(\widetilde{W}^u(f(x), \beta/\lambda)) \times D^{m_0-u})).\label{eq.inclucloseddischange}
	\end{align}
	Thus, for $y \in (g \times \theta)^{-1}( {\rm Int}(\widetilde{W}^u(f(x), \beta/\lambda) \times D^{m_0-u}))$, we have
	\[
		(h \times \eta) \circ f^{-1}(y) \in {\rm Int} (f^{-1}(\widetilde{W}^u(f(x), \beta/\lambda)) \times D^{m_0-u}).
	\]
	Since $(D, h \times \eta) \in \D(\alpha, \widetilde{W}^u(x, \beta) \times D^{m_0-u})$ satisfies {\bf (P2)}, we have
	\[
		h(f^{-1}(y)) = [x, f^{-1}(y)] \in {\rm Int} f^{-1}(\widetilde{W}^u(f(x), \beta/\lambda))
	\]
	for all $y \in (g \times \theta)^{-1}( {\rm Int}\widetilde{W}^u(f(x), \beta/\lambda) \times D^{m_0-u})$.
	This implies that
	\begin{align}
		g(y)
		 & = r(f(h(f^{-1}(y)))) = r(f ( [x, f^{-1}(y)] ))                                                              \\
		 & = f ( [x, f^{-1}(y)] ) \in \widetilde{W}^u(f(x), \beta/\lambda) \cap f(\widetilde{W}^s(f^{-1}(y), \alpha))
	\end{align}
	for all $y \in (g \times \theta)^{-1}( {\rm Int}\widetilde{W}^u(f(x), \beta/\lambda) \times D^{m_0-u})$.
	Thus, 
	\[
		d^s_y(y, g(y)) \leq \lambda d^s_{f^{-1}(y)}(f^{-1}(y), f^{-1}(g(y))) \leq \lambda \alpha,
	\]
	proving property (a).
	This together with \eqref{eq.inclucloseddischange} yields the conclusion.
\end{proof}

Let us define the following new property;
\begin{enumerate}
	\item[{\bf (P2')}] For all $y \in (\overline{h} \times \overline{\eta})^{-1}({\rm Int} (\widetilde{W}^u(x, \beta-\e/2) \times D^{m_0-u}))$,
	      \[
		      \widetilde{W}^s(y, \alpha) \cap \widetilde{W}^u(x) = \{ [x, y] \} \neq \emptyset
	      \]
	      and $d^u_{x}(\overline{h}(y), [x, y]) < \e/(2m)$.
\end{enumerate}
The following lemma asserts that if $\overline{h} \times \overline{\eta}$ satisfies {\bf (P1)} and {\bf (P2')}, then there exists a map $g \times \theta$ satisfying {\bf (P1)} and {\bf (P2)}.
Since {\bf (P2')} is weaker than {\bf (P2)}, we have to make $\beta$ smaller to have such $g \times \theta$.
\begin{lem}\label{lem.generalizedpurt}
	Let $\alpha, \beta > 0$ and $\e \in (0, \beta)$ be given.
	Suppose that there are $x \in W_i$, $1 \leq i \leq s$, with $\dim\widetilde{W}^u(x) = u$, an embedded $m_0$-dimensional closed ball $D$ and a continuous map $\overline{h} \times \overline{\eta} : D \to \widetilde{W}^u(x, \beta) \times D^{m_0-u}$ for some $0 \leq u \leq m_0 \leq m$ satisfying {\bf (P1)} and {\bf (P2')}.
	Then there exists a continuous map $g \times \theta : D \to \widetilde{W}^u(x, \beta - \e) \times D^{m_0-u}$ satisfying the following properties:
	\begin{enumerate}[leftmargin=0.8cm, itemindent=0cm]
		\item[$(a)$]
		      $(D, g \times \theta) \in \D(\alpha, \widetilde{W}^u(x, \beta - \e) \times D^{m_0-u})$;
		\item[$(b)$]
		      $V^u(g \times \theta)
			      \subset (\overline{h} \times \overline{\eta})^{-1}({\rm Int}(\widetilde{W}^u(x, \beta) \times D^{m_0-u}))$.
	\end{enumerate}
\end{lem}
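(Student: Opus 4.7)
The plan is to first perturb $\overline{h}$ to a map $\tilde{h}$ that literally agrees with $y\mapsto[x,y]$ on a suitable deep interior region, and then compose with a $u$-nontrivial retraction onto $\widetilde{W}^u(x,\beta-\e)$. The numerical bound $d^u_x(\overline{h}(y),[x,y])<\e/(2m)$ in (P2') is precisely what keeps the interpolation inside $\widetilde{W}^u(x,\beta)$ while leaving a positive gap in which to place the cutoff.

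First I would use $\exp_x^{-1}$ to identify $\widetilde{W}^u(x,\beta)$ with the closed Euclidean ball of radius $\beta$ in $\R^u$, so that convex combinations and linear homotopies are available. Write $U=(\overline{h}\times\overline{\eta})^{-1}({\rm Int}(\widetilde{W}^u(x,\beta-\e/2)\times D^{m_0-u}))$; by (P2'), $y\mapsto[x,y]$ is a well-defined continuous map on $U$, and the triangle inequality gives $d^u_x(x,[x,y])<\beta-\e/2+\e/(2m)<\beta$, so $[x,y]\in\widetilde{W}^u(x,\beta)$ there.

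Next, introduce cutoffs $\chi_1:\widetilde{W}^u(x,\beta)\to[0,1]$ with $\chi_1=1$ on $\widetilde{W}^u(x,\beta-\e+\e/(2m))$ and $\chi_1=0$ outside $\widetilde{W}^u(x,\beta-\e/2)$, and $\chi_2:D^{m_0-u}\to[0,1]$ with $\chi_2=1$ on $D^{m_0-u}(t_0)$ and $\chi_2=0$ outside $D^{m_0-u}(t_0')$ for chosen $t_0<t_0'<1$. Set $\chi(y)=\chi_1(\overline{h}(y))\,\chi_2(\overline{\eta}(y))$ and define
\[
\tilde{h}(y)=(1-\chi(y))\,\overline{h}(y)+\chi(y)\,[x,y],
\]
the combination taken in $\exp_x^{-1}$-coordinates; since $\chi(y)>0$ forces $y\in U$, the formula is everywhere well-defined and continuous. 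A convex-combination estimate using $d^u_x(\overline{h}(y),[x,y])<\e/(2m)$ keeps $\tilde{h}(y)\in\widetilde{W}^u(x,\beta)$. The straight-line homotopy $\tilde{h}_s(y)=(1-s\chi(y))\overline{h}(y)+s\chi(y)[x,y]$ from $\overline{h}$ to $\tilde{h}$ stays inside $\widetilde{W}^u(x,\beta)$, and on $\partial D$ it reduces to $\overline{h}$ because there $(\overline{h}\times\overline{\eta})(y)\in\partial(\widetilde{W}^u(x,\beta)\times D^{m_0-u})$ forces $\chi_1(\overline{h}(y))=0$ or $\chi_2(\overline{\eta}(y))=0$; hence $\tilde{h}\times\overline{\eta}$ inherits $m_0$-nontriviality from $\overline{h}\times\overline{\eta}$.

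Finally, let $r:\widetilde{W}^u(x,\beta)\to\widetilde{W}^u(x,\beta-\e)$ be a $u$-nontrivial retraction collapsing the annulus onto $\partial\widetilde{W}^u(x,\beta-\e)$, and let $s:D^{m_0-u}\to D^{m_0-u}$ be an $(m_0-u)$-nontrivial retraction with $s|_{D^{m_0-u}(t_0)}=\mathrm{id}$ sending $D^{m_0-u}\setminus D^{m_0-u}(t_0)$ onto $\partial D^{m_0-u}$. Put $g=r\circ\tilde{h}$ and $\theta=s\circ\overline{\eta}$. Applying Lemma~\ref{lem.nontrivial}(2) and then~(1) to $(r\times s)\circ(\tilde{h}\times\overline{\eta})$ yields (P1). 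For any $y$ with $g(y)\in{\rm Int}\widetilde{W}^u(x,\beta-\e)$ and $\theta(y)\in{\rm Int}D^{m_0-u}$, the choice of cutoff radii forces $\chi_1(\overline{h}(y))=1$ and $\chi_2(\overline{\eta}(y))=1$, hence $\chi(y)=1$, $\tilde{h}(y)=[x,y]\in{\rm Int}\widetilde{W}^u(x,\beta-\e)$, and $g(y)=r([x,y])=[x,y]$, giving (P2); condition (b) is immediate, since $\chi(y)=1$ places $\overline{h}(y)\in\widetilde{W}^u(x,\beta-\e+\e/(2m))\subset{\rm Int}\widetilde{W}^u(x,\beta)$ and $\overline{\eta}(y)\in D^{m_0-u}(t_0)\subset{\rm Int}D^{m_0-u}$. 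The main obstacle is the simultaneous bookkeeping of the nested cutoff thresholds in both factors: the $\e/(2m)$-hypothesis is sharp in the sense that it exactly opens a positive window $(\beta-\e+\e/(2m),\,\beta-\e/2)$ for $\chi_1$, and a parallel treatment of the $D^{m_0-u}$-factor via $s$ and $\chi_2$ is what lets (P2) coexist with $m_0$-nontriviality.
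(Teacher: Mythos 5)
Your overall strategy is the same as the paper's: push $\overline{h}$ toward $y\mapsto[x,y]$ on a deep interior region, using the $\e/(2m)$ bound of {\bf (P2')} to keep the interpolation inside $\widetilde{W}^u(x,\beta)$, and then compose with nontrivial retractions in both factors, invoking Lemma \ref{lem.nontrivial} for {\bf (P1)}. The only real difference of implementation is that the paper extends the correction term $P([x,y])-P(\overline{h}(y))$ from $K_0\sqcup K_1$ to all of $D$ coordinatewise by the Tietze extension theorem and only then multiplies by a radial cutoff, whereas you build the interpolation directly as $(1-\chi)\overline{h}+\chi\,[x,y]$ with $\chi=\chi_1(\overline{h})\chi_2(\overline{\eta})$. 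This is equivalent (and arguably more transparent, since $\chi>0$ already forces $y$ into the region where $[x,y]$ is defined and $\e/(2m)$-close to $\overline{h}(y)$, so $\chi(y)\bigl([x,y]-\overline{h}(y)\bigr)$ extends continuously by $0$), so I would not call it a genuinely different route.

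There is, however, one step that fails as literally written: the map $s:D^{m_0-u}\to D^{m_0-u}$ cannot be the identity on $D^{m_0-u}(t_0)$ and simultaneously send $D^{m_0-u}\setminus D^{m_0-u}(t_0)$ onto $\partial D^{m_0-u}$ --- at $\lvert v\rvert=t_0$ the two prescriptions give points of norm $t_0$ and of norm $1$, so such an $s$ is discontinuous (unlike your $r$, whose annulus collapses onto the boundary of the retract, exactly where the identity part ends). The correct map is the paper's $r_\eta$: dilate $D^{m_0-u}(t_0)$ onto $D^{m_0-u}$ by $v\mapsto v/t_0$ and send the annulus radially onto $\partial D^{m_0-u}$. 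With this replacement one still has $\theta^{-1}({\rm Int}\,D^{m_0-u})=\overline{\eta}^{-1}({\rm Int}\,D^{m_0-u}(t_0))$, which is all your verification of {\bf (P2)} and (b) uses, so the rest of the argument goes through. A second, harmless imprecision: in the check of (b), $\chi(y)=1$ does not by itself place $\overline{h}(y)$ in $\widetilde{W}^u(x,\beta-\e+\e/(2m))$ (that implication goes the other way); what you need, and what does hold, is that $\chi(y)>0$ forces $\overline{h}(y)\in\widetilde{W}^u(x,\beta-\e/2)\subset{\rm Int}\,\widetilde{W}^u(x,\beta)$ and $\overline{\eta}(y)\in D^{m_0-u}(t_0')\subset{\rm Int}\,D^{m_0-u}$, which suffices for (b).
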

\begin{proof}
	Denote
	\[
		\widetilde{E}^u_x(\beta) = \{ y \in \widetilde{E}^u_x ; \lvert y \rvert \leq \beta \},
	\]
	where $\widetilde{E}^u_x$ is given in the beginning of Section \ref{sec.prepforthm.homologyshadowing}.
	Let
	\[
	K_0 = (\overline{h} \times \overline{\eta})^{-1}(\widetilde{W}^u(x, \beta - \e/2) \times D^{m_0-u}(1/2))\]
	and
	\[
		K_1 = (\overline{h} \times \overline{\eta})^{-1}(\partial (\widetilde{W}^u(x, \beta) \times D^{m_0-u})),
	\]
	where $D^{m_0-u}(1/2)$ is a closed ball in $\R^{m_0-u}$ with radius $1/2$.
	Denote by $\pr_{\widetilde{E}^u_x} : \widetilde{E}^s_x \oplus \widetilde{E}^u_x \to \widetilde{E}^u_x$ the projection to the second component.
	Then, define a diffeomorphism $P : \widetilde{W}^u(x, \beta) \to \widetilde{E}^s_x(\beta)$ by $P(z) = \pr_{\widetilde{E}^u_x} \circ \exp_{x}^{-1}(z)$.
	By the definition of $d^u_x$, we have
	\begin{equation}
		d^u_x(y, z) = \lvert P(y) - P(z) \rvert, \quad y, z \in \widetilde{W}^u(x, \beta).\label{eq.defofduy}
	\end{equation}
	Let $v_1, v_2, \ldots, v_u \in \widetilde{E}^u_x$ be orthonormal basis of $\widetilde{E}^u_x$.
	We need the following claim.
	\begin{claim}\label{claim.3}
		There exists a continuous map $H : D \to \widetilde{E}^u_x(\e/2)$ such that
		\[
			H(y) =
			\begin{cases}
				P([x, y]) - P(\overline{h}(y)), & y \in K_0,  \\
				0,                              & y \in K_1,
			\end{cases}
		\]
		and $\lvert H(y) \rvert \leq \e/2$ for all $y \in D$.
	\end{claim}
	For the proof, by {\bf (P2')},
	\[
		\widetilde{W}^s(y, \alpha) \cap \widetilde{W}^u(x, \beta) = \{ [x, y] \} \neq \emptyset
	\]
	for all $y \in K_0$.
	Define $H_0 : K_0 \sqcup K_1 \to \widetilde{E}^u_x$ by
	\[
		H_0(y) =
		\begin{cases}
			P([x, y]) - P(\overline{h}(y)), & y \in K_0,  \\
			0,                              & y \in K_1.
		\end{cases}
	\]
	For each $j = 1, 2, \ldots, u$, denote by $\pr_j : \widetilde{E}^u_x \to \R$ the projection
	\[
		\pr_j(t_1v_1 + \cdots + t_uv_u) = t_j, \quad (t_1, \ldots, t_u) \in \R^u.
	\]
	Using Tietze extension theorem for $\pr_j \circ H_0 : K_0 \sqcup K_1 \to \R$, we find a continuous map $H_j : D \to \R$ such that
	\[
		H_j(y) = \pr_j \circ H_0(y), \quad y \in K_0 \sqcup K_1,
	\]
	and
	\[
		\sup \{\lvert H_j(y) \rvert ; y \in D \}
		= \sup \{ \lvert \pr_j \circ H_0(y) \rvert ; y \in K_0 \sqcup K_1 \}
		\leq \e/(2m).
	\]
	The last inequality follows from \eqref{eq.defofduy} and the property {\bf (P2')}.
	Define $H : D \to \widetilde{E}^u_x$ by $H(y) = H_1(y)v_1 + \cdots + H_u(y)v_u$.
	Then we have
	\[
		\lvert H(y) \rvert
		\leq \lvert H_1(y) \rvert + \cdots + \lvert H_u(y) \rvert
		\leq \e/(2m) + \cdots + \e/(2m) \leq \e/2
	\]
	for all $y \in D$, which proves the claim \ref{claim.3}.
	
	Let $\tau : D \to [0, 1]$ be a continuous map defined by
	\[
		\tau(y) = 
		\begin{cases}
			(\beta - \lvert P \circ \overline{h}(y) \rvert)/(\e/2), & \beta - \e/2 \leq \lvert P \circ \overline{h}(y) \rvert \leq \beta, \\
			1,                                                      & \lvert P \circ \overline{h}(y) \rvert \leq \beta - \e/2.
		\end{cases}
	\]
	Then we see that $\tau(y) \cdot H(y) + P \circ \overline{h}(y) \in \widetilde{E}^u_x(\beta)$ for all $y \in D$.
	In fact, for all $y \in D$ with $\beta - \e/2 \leq \lvert P \circ \overline{h}(y) \rvert \leq \beta$,
	\begin{align}
		\lvert \tau(y) \cdot H(y) + P \circ \overline{h}(y) \rvert
		 & \leq \tau(y) \cdot \lvert H(y) \rvert + \lvert P \circ \overline{h}(y) \rvert                                  \\
		 & \leq (\beta - \lvert P \circ \overline{h}(y) \rvert)/(\e/2) \cdot \e/2 + \lvert P \circ \overline{h}(y) \rvert \\
		 & = \beta,
	\end{align}
	and for all $y \in D$ with $\lvert P \circ \overline{h}(y) \rvert \leq \beta - \e/2$, 
	\[
		\lvert \tau(y) \cdot H(y) + P \circ \overline{h}(y) \rvert
		\leq \lvert H(y) \rvert + \lvert P \circ \overline{h}(y) \rvert
		\leq \e/2 + (\beta - \e/2) = \beta.
	\]
	Thus, we can define a continuous map $G : D \to \widetilde{E}^u_x(\beta)$ by
	\[
		G(y) = \tau(y) \cdot H(y) + P \circ \overline{h}(y).
	\]
	
	Assume that $G(y) \in \widetilde{E}^u_x(\beta - \e)$ and $\overline{\eta}(y) \in D^{m_0-u}(1/2)$ for some $y \in D$. Since $\lvert G(y) - P \circ \overline{h}(y) \rvert = \lvert \tau(y) \cdot H(y) \rvert = \lvert H(y) \rvert < \e/2$, we have $P \circ \overline{h}(y) \in \widetilde{E}^u_x(\beta - \e/2)$.
	This and $\overline{\eta}(y) \in D^{m_0-u}(1/2)$ imply $y \in K_0$.
	Therefore,
	\begin{equation}
		y \in K_0,\label{eq.inverseiimageofG}
	\end{equation}
	\begin{equation}
		G(y) = P([x, y])\label{eq.Gofinteriorofdomain}
	\end{equation}
	for all $y \in D$ satisfying $G(y) \in \widetilde{E}^u_x(\beta - \e)$ and $\overline{\eta}(y) \in D^{m_0-u}(1/2)$.
	Let $r : \widetilde{W}^u(x, \beta) \to \widetilde{W}^u(x, \beta - \e)$ be a $u$-nontrivial retraction such that $r(\widetilde{W}^u(x, \beta) \setminus \widetilde{W}^u(x, \beta - \e)) = \partial \widetilde{W}^u(x, \beta - \e)$, and let $r_\eta : D^{m_0-u} \to D^{m_0-u}$ be a continuous map defined by
	\[
		r_\eta(v) =
		\begin{cases}
			2v,               & \lvert v \rvert \leq 1/2,         \\
			v/\lvert v\rvert, & 1/2 \leq \lvert v \rvert \leq 1.
		\end{cases}
	\]
	Now let us prove that $(D, (r \circ P^{-1} \circ G) \times (r_\eta \circ \overline{\eta})) \in \D(\alpha, \widetilde{W}^u(x, \beta - \e) \times D^{m_0-u})$.
	From Lemma \ref{lem.nontrivial}, it follows that
	\[
		(r \circ \overline{h}) \times (r_\eta \circ \overline{\eta})
		= (r \times r_\eta) \circ (\overline{h} \times \overline{\eta})
	\]
	is $m_0$-nontrivial.
	This and the fact that $r \circ P^{-1} \circ G \vert_{\partial D} = r \circ \overline{h} \vert_{\partial D}$ (notice that $\partial D \subset K_0$) imply that $(r \circ P^{-1} \circ G) \times (r_\eta \circ \overline{\eta})$ is $m_0$-nontrivial.
	Since
	\begin{align}
		(r \circ P^{-1} \circ G)^{-1}({\rm Int} \widetilde{W}^u(x, \beta - \e))
		 & = G^{-1} \circ P \circ r^{-1}({\rm Int} \widetilde{W}^u(x, \beta - \e))       \\
		 & = G^{-1} \circ P({\rm Int} \widetilde{W}^u(x, \beta - \e))                    \\
		 & = G^{-1}({\rm Int} \widetilde{E}^u_x(\beta - \e))\label{eq.explicitiverseofg}
	\end{align}
	and
	\[
		(r_\eta \circ \overline{\eta})^{-1}({\rm Int}D^{m_0-u})
		= \overline{\eta}^{-1} \circ r_\eta^{-1}({\rm Int}D^{m_0-u})
		= \overline{\eta}^{-1} ({\rm Int}D^{m_0-u}(1/2)),
	\]
	we may apply \eqref{eq.Gofinteriorofdomain} for $y \in ((r \circ P^{-1} \circ G) \times (r_\eta \circ \overline{\eta}))^{-1}({\rm Int} (\widetilde{W}^u(x, \beta - \e) \times D^{m_0-u}))$ to obtain
	\[
		r \circ P^{-1} \circ G(y) = r([x, y]) = [x, y],
	\]
	which implies {\bf (P2)} as required.
	Combining \eqref{eq.explicitiverseofg} and \eqref{eq.inverseiimageofG}, we also get
	\begin{align}
		V^u((r \circ P^{-1} \circ G) \times (r_\eta \circ \overline{\eta}))
		 & =((r \circ P^{-1} \circ G) \times (r_\eta \circ \overline{\eta}))^{-1}({\rm Int} (\widetilde{W}^u(x, \beta - \e) \times D^{m_0-u}))   \\
		 & = (r \circ P^{-1} \circ G)^{-1}({\rm Int}\widetilde{W}^u(x, \beta - \e)) \cap (r_\eta \circ \overline{\eta})^{-1}({\rm Int}D^{m_0-u}) \\
		 & \subset G^{-1} ({\rm Int} \widetilde{E}^u_x(\beta - \e)) \cap \overline{\eta}^{-1}({\rm Int}D^{m_0-u}(1/2))                           \\
		 & \subset K_0                                                                                                                           \\
		 & \subset (\overline{h} \times \overline{\eta})^{-1}({\rm Int}(\widetilde{W}^u(x, \beta) \times D^{m_0-u})).
	\end{align}
	This complete the proof of Lemma \ref{lem.generalizedpurt}.
\end{proof}
The following claim follows from Claim \ref{claim.condonWi} (2).
\begin{claim}\label{claim.presmallpurtforclosed ball}
	Let $\alpha, \beta > 0$ and $\e > 0$ be given.
	If $x \in W_i$ and $(D_0, h_0 \times \eta_0) \in \D(\alpha, \widetilde{W}^u(x, \beta) \times D^{m_0-u})$, then there exists $\d > 0$ such that for all $y \in (h_0 \times \eta_0)^{-1}({\rm Int} (\widetilde{W}^u(x, \beta-\e/2) \times D^{m_0-u}))$ and $z \in B(y, \d)$,
	\[
		\widetilde{W}^s(z, \alpha + \e) \cap \widetilde{W}^u(x)= \{[x, z]\} \neq \emptyset \quad \text{ {\it and} } \quad
		d^u_{x}( h_0(y), [x, z]) < \e/(2m).
	\]
\end{claim}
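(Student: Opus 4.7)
My plan is to combine property \textbf{(P2)} satisfied by $(D_0, h_0 \times \eta_0)$ with the uniform continuity of the bracket map $(z, w) \mapsto [z, w]$ that was in fact established inside the proof of Claim \ref{claim.condonWi}(2). Write $V = (h_0 \times \eta_0)^{-1}({\rm Int}(\widetilde{W}^u(x, \beta - \e/2) \times D^{m_0-u}))$ and let $\overline{V}$ denote its closure in $D_0$; since $D_0$ is a compact embedded ball, $\overline{V}$ is compact. For each $y \in V$, \textbf{(P2)} gives $h_0(y) = [x, y] \in \widetilde{W}^s(y, \alpha) \cap \widetilde{W}^u(x, \beta - \e/2)$, so $y \in \B(x, \alpha, \beta) \subset U$ by Claim \ref{claim.condonWi}(1); by Remark \ref{rk.motivationofD(a(w(xb)))} the identity $h_0(y) = [x, y]$ persists on all of $\overline{V}$, and the $d^s_y$-distance from $y$ to $[x, y]$ is everywhere bounded by $\alpha$.

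For the inequality $d^u_x(h_0(y), [x, z]) < \e/(2m)$, I would apply Claim \ref{claim.condonWi}(2) with $x' = x$, $y' = z$ and parameter $\e/(2m)$. This produces $\d_1 > 0$ such that $\dist(y, z) < \d_1$ implies $d^u_x([x, y], [x, z]) < \e/(2m)$, which becomes the desired inequality once one substitutes $h_0(y) = [x, y]$. Shrinking $\d_1$ further so that the $\d_1$-neighborhood of the compact set $\overline{V}$ is contained in $U$ (possible because $\overline{V} \subset U$ and $U$ is open) simultaneously ensures $z \in U$ and keeps $z$ in the regime where $[x, z]$ is uniquely defined by the local product structure.

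For the set-equality $\widetilde{W}^s(z, \alpha + \e) \cap \widetilde{W}^u(x) = \{[x, z]\}$, the local product structure already forces $\widetilde{W}^s(z) \cap \widetilde{W}^u(x) = \{[x, z]\}$ for $z$ close enough to $x$, so the only real content is the bound $d^s_z(z, [x, z]) \leq \alpha + \e$. The function $w \mapsto d^s_w(w, [x, w])$ is continuous on a neighborhood of $\overline{V}$ (by continuity of the bracket and of the exponential-based metric $d^s$) and, by the previous paragraph, is everywhere bounded by $\alpha$ on the compact set $\overline{V}$. Compactness of $\overline{V}$ and uniform continuity therefore produce $\d_2 > 0$ with $d^s_z(z, [x, z]) < \alpha + \e$ whenever $y \in \overline{V}$ and $\dist(y, z) < \d_2$. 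Taking $\d = \min(\d_1, \d_2)$ completes the argument; the sole technical point is this final uniform-continuity step, which is directly analogous to the one already carried out in the proof of Claim \ref{claim.condonWi}(2), so I do not anticipate any serious obstacle.
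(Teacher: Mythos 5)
Your proof is correct and follows exactly the route the paper intends: the paper gives no proof of this claim beyond the remark that it "follows from Claim \ref{claim.condonWi} (2)", and your argument is a sound elaboration of precisely that, combining \textbf{(P2)} with the uniform continuity of the bracket map and a compactness argument on $\overline{V}$.
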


We see that $(D_0, h_0 \times \eta_0) \in \D(\alpha, \widetilde{W}^u(x, \beta) \times D^k)$ means $D_0$ is ``$\alpha$-close'' to $\widetilde{W}^u(x, \beta)$ in the sense of Remark \ref{rk.motivationofD(a(w(xb)))}.
The following lemma shows that, if we take $D_1$ so "close" to $D_0$, then $D_1$ is ``$(\alpha + \e)$-close'' to $\widetilde{W}^u(x, \beta)$, which implies $(D_1, h \times \eta) \in \D(\alpha + \e, \widetilde{W}^u(x, \beta - \e) \times D^{k + l})$. Here, $D_0$ and $D_1$ are not required to have the same dimension. Consequently, one can handle pseudotrajectories passing neighborhoods of basic sets with different dimensions.
As in the proof of Lemma \ref{lem.generalizedpurt}, we need to make $\beta$ smaller.
\begin{lem}\label{lem.smallpurtforclosedball}
	Given $\alpha, \beta > 0$ and $\e > 0$, let
	$(D_0, h_0 \times \eta_0) \in \D(\alpha, \widetilde{W}^u(x, \beta) \times D^k)$ with $x \in W_i$ for some $k \geq 0$ and $1 \leq i \leq s$.
	Suppose that there are an embedded closed ball $D_1$ and a continuous map $h_1 \times \eta_1 : D_1 \to D_0 \times D^\ell$ for some $\ell \geq 0$ satisfying the following properties:
	\begin{itemize}[leftmargin=0.4cm, itemindent=0cm]
		\item $D_1$ is a $(u + k + \ell)$-dimensional closed ball when $u = \dim \widetilde{W}^u(x)$.
		\item $h_1 \times \eta_1$ is $(u + k + \ell)$-nontrivial, where $D_0 \times D^\ell$ is thought of as a $((u+k)+\ell)$-dimensional closed disk.
		\item $\dist(y, h_1(y)) < \d$ for all $y \in (h_1 \times \eta_1)^{-1}({\rm Int}(D_0 \times D^\ell))$, where $\d > 0$ is the constant given by Claim \ref{claim.presmallpurtforclosed ball}.
	\end{itemize}
	Then there exists a continuous map $h \times \eta : D_1 \to \widetilde{W}^u(x, \beta - \e) \times D^{k + \ell}$ satisfying the following properties:
	\begin{enumerate}[leftmargin=0.8cm, itemindent=0cm]
		\item[$(a)$] $(D_1, h \times \eta) \in \D(\alpha + \e, \widetilde{W}^u(x, \beta - \e) \times D^{k + \ell})$;
		\item[$(b)$] 
		      $V^u(h \times \eta) \subset (h_1 \times \eta_1)^{-1}({\rm Int} (D_0 \times D^\ell))$.
	\end{enumerate}
\end{lem}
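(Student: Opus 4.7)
The plan is to reduce the statement to Lemma \ref{lem.generalizedpurt} by first manufacturing an auxiliary map on $D_1$ that already satisfies {\bf(P1)} and the weaker {\bf(P2')} at parameter $\alpha + \e$. Concretely, I form
\[
	\overline{h} \times \overline{\eta}
	= \bigl( (h_0 \times \eta_0) \times {\rm id}_{D^\ell} \bigr) \circ (h_1 \times \eta_1)
	: D_1 \to \widetilde{W}^u(x, \beta) \times D^k \times D^\ell,
\]
after the identification $D^{k+\ell} \simeq D^k \times D^\ell$. Property {\bf(P1)} is immediate from Lemma \ref{lem.nontrivial}: since $h_0 \times \eta_0$ is $(u+k)$-nontrivial, part (2) of that lemma shows $(h_0 \times \eta_0) \times {\rm id}_{D^\ell}$ is $(u+k+\ell)$-nontrivial, and composing with the $(u+k+\ell)$-nontrivial map $h_1 \times \eta_1$ via part (1) gives the desired nontriviality of $\overline{h} \times \overline{\eta}$.

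To verify {\bf(P2')}, fix $y \in (\overline{h} \times \overline{\eta})^{-1}({\rm Int}(\widetilde{W}^u(x, \beta - \e/2) \times D^{k+\ell}))$. Property {\bf(P1)} for $D_0$ forces $(h_0 \times \eta_0)(\partial D_0) \subset \partial(\widetilde{W}^u(x, \beta) \times D^k)$, so the interiority of $(h_0 \times \eta_0)(h_1(y))$ forces $h_1(y) \in {\rm Int}\, D_0$, and hence $y \in (h_1 \times \eta_1)^{-1}({\rm Int}(D_0 \times D^\ell))$. The hypothesis then yields $\dist(y, h_1(y)) < \d$, and Claim \ref{claim.presmallpurtforclosed ball} applied with $y' = h_1(y)$, $z = y$ delivers
\[
	\widetilde{W}^s(y, \alpha + \e) \cap \widetilde{W}^u(x) = \{[x, y]\}, \qquad d^u_x(\overline{h}(y), [x, y]) < \e/(2m),
\]
which is exactly {\bf(P2')} at parameter $\alpha + \e$.

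Now I would apply Lemma \ref{lem.generalizedpurt} to $\overline{h} \times \overline{\eta}$ with parameters $(\alpha + \e, \beta, \e)$ and $m_0 - u = k + \ell$, obtaining a map $h \times \eta : D_1 \to \widetilde{W}^u(x, \beta - \e) \times D^{k+\ell}$ with $(D_1, h \times \eta) \in \D(\alpha + \e, \widetilde{W}^u(x, \beta - \e) \times D^{k+\ell})$, which is conclusion (a). Conclusion (b) follows because Lemma \ref{lem.generalizedpurt}(b) gives $V^u(h \times \eta) \subset (\overline{h} \times \overline{\eta})^{-1}({\rm Int}(\widetilde{W}^u(x, \beta) \times D^{k+\ell}))$, and the same boundary argument as above shows this last set is contained in $(h_1 \times \eta_1)^{-1}({\rm Int}(D_0 \times D^\ell))$.

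The main obstacle I anticipate is bookkeeping rather than conceptual: one must carefully track the identification $D^{k+\ell} \simeq D^k \times D^\ell$ and the induced boundary decomposition, be sure that the interiority of $\overline{h}(y)$ in $\widetilde{W}^u(x, \beta - \e/2)$ combined with interiority in the $D^k$ factor actually forces $h_1(y) \in {\rm Int}\, D_0$, and remember to promote $\alpha$ to $\alpha + \e$ before invoking Lemma \ref{lem.generalizedpurt}.
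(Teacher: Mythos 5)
Your proposal is correct and follows essentially the same route as the paper: both form the composition $\bigl((h_0\times\eta_0)\times{\rm id}_{D^\ell}\bigr)\circ(h_1\times\eta_1)$, verify \textbf{(P1)} via Lemma \ref{lem.nontrivial} and \textbf{(P2')} at parameter $\alpha+\e$ via Claim \ref{claim.presmallpurtforclosed ball}, and then invoke Lemma \ref{lem.generalizedpurt} with $(\alpha+\e,\beta,\e)$ to obtain (a) and the containment giving (b). Your explicit boundary argument for why interiority of the image forces $h_1(y)\in{\rm Int}\,D_0$ is a detail the paper leaves implicit, but the substance is the same.
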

\begin{proof}
	We identify $D^{k+\ell}$ with $D^k \times D^\ell$,
	and prove that
	\[
		(h_0 \circ h_1) \times ((\eta_0 \circ h_1) \times \eta_1) : D_1 \to \widetilde{W}^u(x, \beta) \times (D^k \times D^l)
	\]
	satisfies the two properties {\bf (P1)}, {\bf (P2')} of Lemma \ref{lem.generalizedpurt} with $(\alpha, \beta, \e)$ replaced by $(\alpha + \e, \beta, \e)$.
	\begin{figure}
		\centering
		\includegraphics[width=12cm]{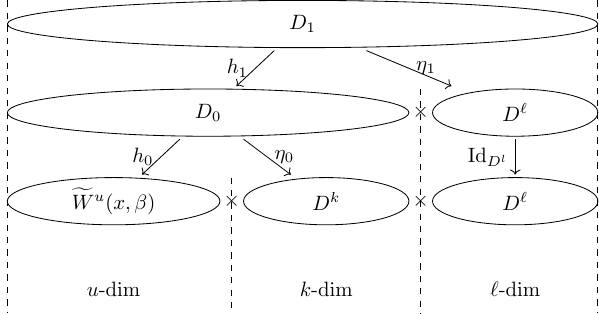}
		\caption{Construction of $(h_0 \circ h_1) \times ((\eta_0 \circ h_1) \times \eta_1)$ in Lemma \ref{lem.smallpurtforclosedball}}\label{figure.constofukl}
	\end{figure}
	By Lemma \ref{lem.nontrivial}(2), $(h_0 \times \eta_0) \times {\rm id}_{D^{\ell}}$ is $(u + k + \ell)$-nontrivial.
	Combining this and the fact that $h_1 \times \eta_1$ is $(u + k + \ell)$-nontrivial, we can use Lemma \ref{lem.nontrivial}(1) to obtain that
	\[
		(h_0 \times \eta_0 \times {\rm id}_{D^{l}}) \circ (h_1 \times \eta_1) = (h_0 \circ h_1) \times ((\eta_0 \circ h_1) \times \eta_1)
	\]
	is $(u + k + \ell)$-nontrivial (see Figure \ref{figure.constofukl}).
	Let
	\[
		z \in ((h_0 \circ h_1) \times ((\eta_0 \circ h_1) \times \eta_1))^{-1}( {\rm Int} (\widetilde{W}^u(x, \beta - \e/2) \times D^{k + l})).
	\]
	Then $h_1 \times \eta_1(z) \in {\rm Int} (D_0 \times D^\ell)$,
	and by hypothesis, we have $\dist(z, h_1(z)) < \d$.
	Applying Claim \ref{claim.presmallpurtforclosed ball} to $z$ and $y = h_1(z)$,
	\[
		\widetilde{W}^s(z, \alpha + \e) \cap \widetilde{W}^u(x)= \{[x, z]\} \neq \emptyset
	\]
	and $d^u_x(h_0(h_1(z)), [x, z]) < \e/(2m)$,
	which yields property {\bf (P2')}.
	Thus, we may apply Lemma \ref{lem.generalizedpurt} to obtain that there is a continuous map $h \times \eta : D_1 \to \widetilde{W}^u(x, \beta - \e) \times D^{k+\ell}$ such that $(D_1, h \times \eta) \in \D(\alpha + \e, \widetilde{W}^u(x, \beta - \e) \times D^{k+\ell})$, proving property (a).
	Moreover,
	\begin{align}
		V^u(h \times \eta)
		 & = (h \times \eta)^{-1}({\rm Int} (\widetilde{W}^u(x, \beta - \e) \times D^{k + l}))                                                     \\
		 & \subset ((h_0 \circ h_1) \times ((\eta_0 \circ h_1) \times \eta_1))^{-1}({\rm Int} (\widetilde{W}^u(x, \beta) \times D^{k + l}))        \\
		 & = (h_1 \times \eta_1)^{-1} \circ (h_0 \times \eta_0 \times {\rm id}_{D^{l}})^{-1}({\rm Int} \widetilde{W}^u(x, \beta) \times D^{k + l}) \\
		 & \subset (h_1 \times \eta_1)^{-1}({\rm Int} (D_0 \times D^l)),                                                                           
	\end{align}
	which proves property (b) and finishes the proof of this lemma.
\end{proof}

As in Remark \ref{rk.motivationofD(a(w(xb)))}, $(D, h \times \eta) \in \D(\alpha, \widetilde{W}^u(x, \beta) \times D^k)$ means that $D$ is ``$\alpha$-close'' to $\widetilde{W}^u(x, \beta)$.
The following lemma shows that, if we take $y$ so ``close'' to $x$, then $D$ is ``$(\alpha + \Delta)$-close'' to $\widetilde{W}^u(y, \beta)$, which implies $(D, g \times \theta) \in \D(\alpha + \Delta, \widetilde{W}^u(y, \beta') \times D^k)$. In the proof, we need to make $\beta$ smaller again.
\begin{lem}\label{lem.purtaofbase}
	Let $\alpha, \beta > 0$ be given.
	For every $\beta' \in (0, \beta)$ and every $\Delta > 0$, there exists $d > 0$ such that 
	for all $x, y \in W_i$, $1 \leq i \leq s$, with $\dist(x, y) < d$ and $(D, h \times \eta) \in \D(\alpha, \widetilde{W}^u(x, \beta) \times D^k)$, there is a continuous map $g \times \theta : D \to \widetilde{W}^u(y, \beta') \times D^k$ satisfying the following properties:
	\begin{enumerate}[leftmargin=0.8cm, itemindent=0cm]
		\item[$(a)$]
		      $(D, g \times \theta) \in \D(\alpha + \Delta, \widetilde{W}^u(y, \beta') \times D^k)$;
		\item[$(b)$]
		      $V^u(g \times \theta)
			      \subset V^u(h \times \eta)$.
	\end{enumerate}
\end{lem}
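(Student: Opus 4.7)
The plan is to imitate the proof of Lemma \ref{lem.smallpurtforclosedball}: first build an auxiliary continuous map $\overline{h}\times\overline{\eta}\colon D\to\widetilde W^u(y,\beta)\times D^k$ satisfying property \textbf{(P1)} and a version of property \textbf{(P2')} with $\alpha$ replaced by $\alpha+\Delta$; then apply Lemma \ref{lem.generalizedpurt} to upgrade to \textbf{(P2)}; and finally shrink the target radius from $\beta-\e$ down to $\beta'$ via Lemma \ref{lem.basicofepsilonclosever2}.

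Choose $\e\in\bigl(0,\min\{\Delta,\beta-\beta'\}/2\bigr)$. The key observation is that when $\dist(x,y)$ is small, the \emph{stable holonomy} $\Phi_{xy}\colon\widetilde W^u(x,\beta)\to\widetilde W^u(y)$ defined by $\Phi_{xy}(w):=[y,w]$ is well defined (Claim \ref{claim.condonWi}(1)), is a homeomorphism onto its image, and by the continuous dependence of the family $\{\widetilde W^u(\cdot)\}$ on the base point, its image lies in $\widetilde W^u(y,\beta_0)$ for some $\beta_0$ arbitrarily close to $\beta$ and contains $\widetilde W^u(y,\beta)$ in the interior. Applying Claim \ref{claim.condonWi}(2) with $(x,x',y,y')$ replaced by $(x,y,z,z)$ also yields
\[
d^s_z\bigl([x,z],[y,z]\bigr)<\e/(2m),\qquad z\in D,
\]
uniformly, provided $d>0$ is sufficiently small.

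Set $\overline{\eta}:=\eta$ and $\overline{h}:=r\circ\Phi_{xy}\circ h$, where $r\colon\widetilde W^u(y,\beta_0)\to\widetilde W^u(y,\beta)$ is a $u$-nontrivial retraction collapsing $\widetilde W^u(y,\beta_0)\setminus\widetilde W^u(y,\beta)$ onto $\partial\widetilde W^u(y,\beta)$. Property \textbf{(P1)} for $\overline{h}\times\overline{\eta}=(r\circ\Phi_{xy}\times\mathrm{id}_{D^k})\circ(h\times\eta)$ follows from Lemma \ref{lem.nontrivial}, once one observes that $r\circ\Phi_{xy}$ is $u$-nontrivial, being a uniform limit as $d\to 0$ of homeomorphisms between two closed $u$-disks sending boundary to boundary. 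To verify \textbf{(P2')} with parameters $(\alpha+\Delta,\beta,\e)$, take $z\in(\overline{h}\times\overline{\eta})^{-1}\bigl({\rm Int}(\widetilde W^u(y,\beta-\e/2)\times D^k)\bigr)$. Since $r$ is the identity on $\widetilde W^u(y,\beta)$, one has $\overline{h}(z)=\Phi_{xy}(h(z))$; shrinking $d$ further we may arrange that $\Phi_{xy}^{-1}(\widetilde W^u(y,\beta-\e/2))\subset{\rm Int}\,\widetilde W^u(x,\beta)$, so that $z\in V^u(h\times\eta)$ and hence $h(z)=[x,z]$. The identity $[y,[x,z]]=[y,z]$ then gives $\overline{h}(z)=[y,z]$, whence $d^u_y(\overline{h}(z),[y,z])=0$ and
\[
d^s_z(z,[y,z])\le d^s_z(z,[x,z])+d^s_z([x,z],[y,z])<\alpha+\e/(2m)<\alpha+\Delta,
\]
which yields \textbf{(P2')}.

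Applying Lemma \ref{lem.generalizedpurt} produces $(D,g_1\times\theta_1)\in\D(\alpha+\Delta,\widetilde W^u(y,\beta-\e)\times D^k)$ with $V^u(g_1\times\theta_1)\subset(\overline{h}\times\overline{\eta})^{-1}\bigl({\rm Int}(\widetilde W^u(y,\beta)\times D^k)\bigr)\subset V^u(h\times\eta)$, where the last inclusion follows from the same argument as above after a final shrinking of $d$ so that $\Phi_{xy}^{-1}(\widetilde W^u(y,\beta))\subset\widetilde W^u(x,\beta)$. Since $\beta-\e>\beta'$, a further application of Lemma \ref{lem.basicofepsilonclosever2} yields $(D,g\times\theta)\in\D(\alpha+\Delta,\widetilde W^u(y,\beta')\times D^k)$ with $V^u(g\times\theta)\subset V^u(g_1\times\theta_1)$, establishing (a) and (b). The main obstacle is the quantitative control of $\Phi_{xy}$: one must verify that it is a $u$-nontrivial homeomorphism onto an image comparable to $\widetilde W^u(y,\beta)$, which rests on the continuous dependence of the local unstable disks on the base point and the uniform local product structure recorded in Section \ref{sec.prepforthm.homologyshadowing}. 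Once this is in hand, the remainder is a routine adaptation of the arguments in Lemmas \ref{lem.generalizedpurt} and \ref{lem.smallpurtforclosedball}.
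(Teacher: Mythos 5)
Your overall architecture is the same as the paper's: transport $h$ from $\widetilde W^u(x,\beta)$ to a disk around $y$, verify \textbf{(P1)} and \textbf{(P2')}, invoke Lemma \ref{lem.generalizedpurt}, and finish with Lemma \ref{lem.basicofepsilonclosever2}. The difference is the transport map: the paper uses a homeomorphism $i_y\colon\widetilde W^u(x,\beta)\to\widetilde W^u(y,\beta)$ supplied directly by the continuous dependence of the disk family on the base point, with $\dist(z,i_y(z))<(\beta-\beta')/(4m)$, and then bounds $\dist(i_y([x,z]),[y,z])\le\dist(i_y([x,z]),[x,z])+\dist([x,z],[y,z])$ using Claim \ref{claim.condonWi}(2). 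You instead use the holonomy $\Phi_{xy}(w)=[y,w]$ composed with a retraction. Two steps of your version do not hold as written.

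First, the identity $[y,[x,z]]=[y,z]$ is unjustified. It would require $\widetilde W^u(y)\cap\widetilde W^s([x,z])=\widetilde W^u(y)\cap\widetilde W^s(z)$, i.e.\ a coherence of the extended family $\{\widetilde W^s(w)\}_{w\in U}$ under the relation $w\in\widetilde W^s(z)$; the construction in Section \ref{sec.prepforthm.homologyshadowing} gives only semi-invariance and continuity, not that these disks agree. Note that $[x,z]$ and $z$ are only $\alpha$-apart on $\widetilde W^s(z)$, so you cannot appeal to uniform continuity of the bracket in that variable either. The statement is repairable: both $[y,[x,z]]$ and $[y,z]$ lie within $O(\e)$ of $[x,z]=[x,[x,z]]$ by Claim \ref{claim.condonWi}(2) applied to the base points $x,y$ (once $d$ is small), which gives $d^u_y(\overline h(z),[y,z])<\e/(2m)$ as \textbf{(P2')} actually requires — but that argument, not the exact identity, is what must be written. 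Second, \textbf{(P1)} is not established: $r\circ\Phi_{xy}$ need not map $\partial\widetilde W^u(x,\beta)$ into $\partial\widetilde W^u(y,\beta)$, because points of $\Phi_{xy}(\partial\widetilde W^u(x,\beta))$ may fall strictly inside $\widetilde W^u(y,\beta)$, where your retraction $r$ is the identity. So the composite can fail the boundary condition in the definition of $u$-nontriviality, and the "uniform limit of homeomorphisms" remark does not rescue this. You would need to retract onto a strictly smaller disk $\widetilde W^u(y,\beta_1)$ with $\beta_1<\beta-(\text{holonomy distortion})$, or simply use the paper's $i_y$, which is a genuine boundary-preserving homeomorphism. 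With those two repairs the rest of your argument (the stable-distance estimate $d^s_z(z,[y,z])<\alpha+\e/(2m)<\alpha+\Delta$, the application of Lemma \ref{lem.generalizedpurt}, and the final shrinking to $\beta'$) goes through.
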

\begin{proof}
	Since $\widetilde{W}^u(z, \beta)$ depends continuously on the base point $z$, there exists $d = d(\beta', \Delta) > 0$ such that for all $x, y \in W_i$ with $\dist(x, y) < d$, there is a homeomorphism $i_y : \widetilde{W}^u(x, \beta) \to \widetilde{W}^u(y, \beta)$ satisfying
	\[
		\dist(z, i_y(z)) < (\beta - \beta')/(4m), \quad z \in \widetilde{W}^u(x, \beta).
	\]
	Moreover, by Claim \ref{claim.condonWi} (2), (making $d$ smaller if necessary), we may assume that if $x \in W_i$ and $w \in U_i$ satisfy $\widetilde{W}^s(w, \alpha) \cap \widetilde{W}^u(x, \beta) = \{[x, w]\} \neq \emptyset$, then the distance between the intersection
	\[
		[y, w] = \widetilde{W}^s(w, \alpha + \Delta) \cap \widetilde{W}^u(y)
	\]
	and $[x, w]$ is less than $(\beta - \beta')/(4m)$ for all $y \in W_i$ with $\dist(x, y) < d$. 
	
	Let us prove that $(i_y \circ h) \times \eta : D \to \widetilde{W}^u(y, \beta) \times D^k$ satisfies properties {\bf (P1)} and {\bf (P2')} of Lemma \ref{lem.generalizedpurt} with $(\alpha, \beta, \e)$ replaced by $(\alpha+\Delta, \beta, \beta - \beta')$.
	As for Property {\bf (P1)}, this follows from Lemma \ref{lem.nontrivial},
	\[
		(i_y \circ h) \times \eta = (i_y \times {\rm id}_{D^k}) \circ (h \times \eta)
	\]
	and the fact that $i_y \times {\rm id}_{D^k}$ is $m_0$-nontrivial.
	As for Property {\bf (P2)}, letting
	\[
		z \in ((i_y \circ h) \times \eta)^{-1}({\rm Int}(\widetilde{W}^u(y, \beta - (\beta - \beta')/2) \times D^k)) \subset (h \times \eta)^{-1}({\rm Int}(\widetilde{W}^u(x, \beta) \times D^k)),
	\]
	we have $h(z) = [x, z]$.
	By the choice of $d$,
	\begin{align}
		\dist(i_y \circ h(z), [y, z]) 
		 & \leq \dist(i_y ([x, z]), [x, z])
		+ \dist([x, z], [y, z])                            \\
		 & < (\beta - \beta')/(4m) + (\beta - \beta')/(4m) \\
		 & = (\beta - \beta')/(2m)
	\end{align}
	for all $z \in ((i_y \circ h) \times \eta)^{-1}({\rm Int}(\widetilde{W}^u(y, \beta - (\beta - \beta')/2) \times D^k))$,
	which implies property {\bf (P2')}.
	Now apply Lemma \ref{lem.generalizedpurt} to obtain a continuous map $g \times \theta : D \to \widetilde{W}^u(y, \beta') \times D^k$ satisfying $(D, g \times \theta) \in \D(\alpha + \Delta, \widetilde{W}^u(y, \beta') \times D^k)$ to have property (b). Finally,
	\begin{align}
		V^u(g \times \theta)
		 & =(g \times \theta)^{-1}({\rm Int}\widetilde{W}^u(y, \beta') \times D^k)                                 \\
		 & \subset ((i_x \circ h) \times \eta)^{-1}({\rm Int}(\widetilde{W}^u(y, \beta) \times D^k))               \\
		 & = (h \times \eta)^{-1}(i_y \times {\rm id}_{D^k})^{-1}({\rm Int}(\widetilde{W}^u(y, \beta) \times D^k)) \\
		 & \subset (h \times \eta)^{-1}({\rm Int}(\widetilde{W}^u(x, \beta) \times D^k)) = V^u(h \times \eta),
	\end{align}
	which proves the property (b) to finish the proof of this lemma.
\end{proof}
\section{The proof of Proposition \ref{pro.starimpliesnocycle}}
In this section, we prove Proposition \ref{pro.starimpliesnocycle}. 
For the proof, we need the following lemma:
\begin{lem}\label{lem.lemforTsuimpliesnocycle}
	Let $f$ be an Axiom A diffeomorphism satisfying the $T^{s, u}$-condition. Given $\alpha, \beta > 0$.
	Suppose that for two periodic points, $p \in \Lambda_i$ and $q \in \Lambda_j$, $1 \leq i, j \leq s$ with $i \neq j$, the stable and unstable manifolds $W^u(p)$ and $W^s(q)$ have an intersection point and $\dim W^u(p) = m - \dim W^s(q) = \dim W^u(q)$.
	Then there exists $k>0$ such that
	\[
		f^{k}(D) \in \D(\alpha, W^u(q, \beta) \times D^0)
	\]
	for all $D \in \D(\alpha, W^u(p, \beta) \times D^0)$.
\end{lem}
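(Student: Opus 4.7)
My plan is a heteroclinic inclination-lemma argument using the tools already established: iterates of $D$ first approximate $W^u(p)$ over growing regions (Lemma~\ref{lem.closeddiscchange}), and then the intersection $z \in W^u(p)\cap W^s(q)$ carries these iterates into a neighborhood of $q$, where the $T^{s,u}$-condition, invoked through Lemma~\ref{lem.C0transstarimplyeclosed}, transfers the $\D$-structure from $W^u(p)$ to $W^u(q)$.

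Let $\pi, \pi'$ be the periods of $p$ and $q$, and choose positive integers $N$ and $k_0$ with $k_0$ a multiple of $\mathrm{lcm}(\pi, \pi')$ so that: (i) $z_0 := f^{k_0}(z) \in W_j$ and its forward orbit stays in $W_j$ (possible since $z \in W^s(q)$); (ii) $\dist(z_0, q)$ is below the threshold required by Lemma~\ref{lem.purtaofbase} for the parameters introduced below; (iii) $\lambda^{\pi N + k_0}\alpha$ is below the threshold $\delta$ from Claim~\ref{claim.presmallpurtforclosed ball} associated to the $\D$-disk of the next paragraph; and (iv) $\beta\lambda^{-(\pi N + k_0)}$ is large enough to contain $z_0$ deep inside. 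Lemma~\ref{lem.closeddiscchange}, applied $\pi N + k_0$ times, then produces $f^{\pi N + k_0}(D) \in \D(\lambda^{\pi N + k_0}\alpha, W^u(p, \beta\lambda^{-(\pi N + k_0)}) \times D^0)$, and because $k_0$ is a common period multiple, $f^{k_0}$ preserves both $W^u(p)$ and $W^s(q)$; thus $z_0 \in W^u(p) \cap W^s(q)$, and the $T^{s,u}$-condition forces the $T$-condition between $W^s(q)$ and $W^u(p)$ at $z_0$.

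I would then apply Lemma~\ref{lem.C0transstarimplyeclosed} with $x = z_0$, $d = u$, $k = 0$ to obtain an embedded $u$-dimensional closed ball $D^\ast \subset W^u(p)$ centered at $z_0$ and a map $h^\ast \times \eta^\ast$ with $(D^\ast, h^\ast \times \eta^\ast) \in \D(\alpha/4, \widetilde W^u(z_0, \beta_1) \times D^0)$ for some $\beta_1 > 0$. Using surjectivity of the $\D$-projection of $f^{\pi N + k_0}(D)$ onto $W^u(p, \beta\lambda^{-(\pi N + k_0)}) \times D^0$, I extract a $u$-dimensional embedded sub-ball $D_1 \subset f^{\pi N + k_0}(D)$ whose restricted projection $h_1 : D_1 \to D^\ast$ is $u$-nontrivial; condition (P2) gives $\dist(y, h_1(y)) \leq C_0\lambda^{\pi N + k_0}\alpha < \delta$ on $h_1^{-1}(\mathrm{Int}\,D^\ast)$, so Lemma~\ref{lem.smallpurtforclosedball} (with $\ell = 0$) produces $(D_1, \bar h \times \bar\eta) \in \D(\alpha/4 + \epsilon, \widetilde W^u(z_0, \beta_1 - \epsilon) \times D^0)$. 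Lemma~\ref{lem.purtaofbase} then changes the base from $z_0$ to $q$ (both lie in $W_j$, and $\widetilde W^u(q) = W^u_{\mathrm{loc}}(q) \subset W^u(q)$), at a further cost of $\epsilon$. Iterating this by a large multiple of $\pi'$ (Lemma~\ref{lem.closeddiscchange}) drives the closeness below $\alpha$ and pushes the base radius past $\beta$, and Lemma~\ref{lem.basicofepsilonclosever2} trims the base back to exactly $\beta$; the total iteration count is the required $k$.

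The main obstacle is that the conclusion concerns $f^k(D)$ itself, whereas Lemma~\ref{lem.smallpurtforclosedball} only yields the $\D$-structure on the sub-ball $D_1$. I expect to resolve this by arranging the retractions hidden in the proofs of Lemmas~\ref{lem.C0transstarimplyeclosed} and \ref{lem.smallpurtforclosedball} so that the complement of $D_1$ inside $f^{\pi N + k_0}(D)$ maps to $\partial(W^u(p, \beta\lambda^{-(\pi N + k_0)}) \times D^0)$, thereby inheriting the boundary part of (P1); alternatively, one can extend $\bar h \times \bar\eta$ from $D_1$ to all of $f^{\pi N + k_0}(D)$ via a retraction onto $D_1$ and verify directly that $u$-nontriviality and (P2) survive. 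Careful bookkeeping of the various $\epsilon$'s, so that the final closeness parameter is exactly $\alpha$ rather than $\alpha/4+\mathrm{error}$, should be routine once this extension is in place.
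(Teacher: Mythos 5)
Your overall strategy is the same as the paper's: iterate $D$ toward $W^u(p)$ (Lemmas \ref{lem.closeddiscchange} and \ref{lem.basicofepsilonclosever2}), use a heteroclinic point pushed into $W^s(q,\alpha/2)$ together with the $T^{s,u}$-condition and Lemma \ref{lem.C0transstarimplyeclosed} to produce a transfer disk $\hat D\subset W^u(p)$ lying in $\D(\cdot, W^u(q,\cdot)\times D^0)$, and then feed the iterates of $D$ into Lemma \ref{lem.smallpurtforclosedball}. (The paper applies Lemma \ref{lem.C0transstarimplyeclosed} with base point $q$ directly, so $\hat D\in\D(\alpha,W^u(q,\beta')\times D^0)$ without your extra base-change via Lemma \ref{lem.purtaofbase}; that difference is harmless.)

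However, there is a genuine gap at the crucial step. You claim that surjectivity of the $m_0$-nontrivial map on $f^{\pi N+k_0}(D)$ lets you ``extract a $u$-dimensional embedded sub-ball $D_1$ whose restricted projection $h_1:D_1\to D^\ast$ is $u$-nontrivial.'' This does not follow: the preimage of $D^\ast$ under $h$ is merely a compact set, need not contain any embedded $u$-ball, and even when it does there is no reason the restriction is $u$-nontrivial onto $D^\ast$. Moreover, as you yourself note, even granting $D_1$ you would only obtain the $\D$-structure on $D_1$, not on $f^k(D)$, and neither of your proposed repairs (forcing the complement of $D_1$ to the boundary, or retracting the domain onto $D_1$) supplies a mechanism that preserves $u$-nontriviality. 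The paper's resolution is to never restrict the domain at all: it takes the map $h_n:f^{n\pi_p}(D)\to W^u(p,\beta)$ furnished by Lemmas \ref{lem.closeddiscchange} and \ref{lem.basicofepsilonclosever2}, chooses $k'$ with $\hat D\subset f^{k'}(W^u(p,\beta))$, and post-composes on the \emph{target} side, setting $h'_n=r\circ f^{k'}\circ h_n\circ f^{-k'}$ where $r:f^{k'}(W^u(p,\beta))\to\hat D$ is a $u$-nontrivial retraction collapsing the complement of $\hat D$ to $\partial\hat D$. Then $h'_n$ is defined on all of $f^{n\pi_p+k'}(D)$, is $u$-nontrivial by Lemma \ref{lem.nontrivial}(1), and satisfies $\dist(y,h'_n(y))<\d$ on $(h'_n)^{-1}({\rm Int}\,\hat D)$ because there $r$ acts as the identity and $\dist(f^{-k'}(y),h_n(f^{-k'}(y)))<C_0\lambda^{n\pi_p}\alpha$ is pushed forward by the uniformly continuous $f^{k'}$. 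Lemma \ref{lem.smallpurtforclosedball} with $D_0=\hat D$ and $D_1=f^{n\pi_p+k'}(D)$ then gives the conclusion for the whole iterated disk. This target-side retraction is the idea missing from your argument.
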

\begin{proof}
	Let $u = \dim W^u(p) = \dim W^u(q)$ and let $x \in W^u(p) \cap W^s(q)$.
	Denote the periods of $p$ and $q$ by $\pi_p$ and $\pi_q$, respectively.
	For some $l > 0$, we have $f^{l\pi_p\pi_q}(x) \in W^s(q, \alpha/2)$.
	Let $x_0 = f^{l\pi_p\pi_q}(x)$.
	It follows from Lemma \ref{lem.C0transstarimplyeclosed} that
	there exists a $u$-dimensional embedded closed ball $\hat{D} \subset f^{l\pi_p\pi_q}(W^u(p)) = W^u(p)$ and $\beta' > 0$ such that $\hat{D} \in \D(\alpha, W^u(q, \beta') \times D^0)$.
	Apply Lemma \ref{lem.smallpurtforclosedball} with $\e$ replaced by $\min\{\alpha, \beta'/2\}$ and $k=l=0$ to obtain $\d > 0$ such that if a $u$-dimensional ball $D'$ and a $u$-nontrivial continuous map $h' : D' \to \hat{D}$ satisfy $\dist(y, h'(y)) < \d$ for all $y \in h'^{-1}({\rm Int}\hat{D})$, then $D' \in \D(2\alpha, W^u(q, \beta'/2) \times D^0)$.
	
	
	By Lemma \ref{lem.closeddiscchange} and Lemma \ref{lem.basicofepsilonclosever2}, there exists $h_n \times \eta_n : f^{n \pi_{p}}(D) \to W^u(p, \beta) \times D^0$ such that
	\[
		(f^{n \pi_{p}}(D), h_n \times \eta_n) \in \D(\lambda^{n\pi_{p}}\alpha, W^u(p, \beta) \times D^0)
	\]
	for sufficiently large $n$.
	
	Take $k' > 0$ with $\hat{D} \subset f^{k'}(W^u(p, \beta))$,
	and let $r : f^{k'}(W^u(p, \beta)) \to \hat{D}$ be $u$-nontrivial retraction such that $r(f^{k'}(W^u(p, \beta)) \setminus \hat{D}) = \partial \hat{D}$.
	Then we have the following claim:
	\begin{claim}\label{claim.5}
		For sufficiently large $n$, the map $h'_n = r \circ f^{k'} \circ h_n \circ f^{-k'} : f^{n \pi_{p} + k'}(D) \to \hat{D}$ is $u$-nontrivial and 
		\begin{equation}
			\dist(y, h'_n(y)) < \d, \quad y \in (h'_n)^{-1}({\rm Int}\hat{D}).\label{eq.yh'ny}
		\end{equation}
	\end{claim}
	To prove this claim, first notice that
	\begin{align}
		(h'_n)^{-1}({\rm Int}\hat{D})
		 & = f^{k'} \circ h_n^{-1} \circ f^{-k'} \circ r^{-1}({\rm Int}\hat{D})           \\
		 & = f^{k'} \circ h_n^{-1} \circ f^{-k'}({\rm Int}\hat{D})                        \\
		 & \subset f^{k'} \circ h_n^{-1}({\rm Int}W^u(p, \beta))                          \\
		 & = f^{k'} \circ (h_n \times \eta_n)^{-1}({\rm Int}(W^u(p, \beta) \times D^0)).
	\end{align}
	From this and {\bf (P2)} for $h_n$, it follows that $h_n \circ f^{-k'}(y) = [p, f^{-k'}(y)] \in W^s(f^{-k'}(y), \lambda^{n\pi_{p}}\alpha)$ for all $y \in (h'_n)^{-1}({\rm Int}\hat{D})$.
	Then using \eqref{eq.constofequivmetric}, we see that $\dist (f^{-k'}(y), h_n \circ f^{-k'}(y)) < C_0\lambda^{n\pi_{p}}\alpha$ for all $y \in (h'_n)^{-1}({\rm Int}\hat{D})$.
	This together with the uniform continuity of $f^{k'}$ finishes the proof of this claim for sufficiently large $n$.
	
	By Claim \ref{claim.5}, $h'_n : f^{n \pi_{p} + k'}(D) \to \hat{D}$ is $u$-nontrivial and satisfies \eqref{eq.yh'ny} for sufficiently large $n$.
	From the choice of $\d$, we have $f^{n \pi_{p} + k'}(D) \in \D(2\alpha, W^u(q, \beta'/2) \times D^0)$.
	
	Finally, apply Lemma \ref{lem.closeddiscchange} and Lemma \ref{lem.basicofepsilonclosever2} repeatedly to obtain $n' > 0$ such that
	\[
		f^{n \pi_{p} + k' + n'\pi_q}(D) \in \D(\alpha, W^u(q, \beta) \times D^0).
	\]
	Put $k = n \pi_{p} + k' + n'\pi_q$ to finish the proof of Lemma \ref{lem.lemforTsuimpliesnocycle}.
\end{proof}
{\bf Proof of Proposition \ref{pro.starimpliesnocycle}}.
To get a contradiction, assume that $f$ has a cycle of basic sets.
Then there exist different basic sets $\Lambda_1$, $\Lambda_2$, $\ldots$, $\Lambda_k$
and wandering points $x_1$, $x_2$, $\ldots$, $x_k$ such that 
\[
	x_i \in W^u(\Lambda_i) \cap W^s(\Lambda_{i+1}), \quad i = 1, 2, \ldots, k,
\]
where $\Lambda_{k+1} = \Lambda_1$.
Since $x_{i+1} \in W^u(\Lambda_{i+1})$ and $x_i \in W^s(\Lambda_{i+1})$,
we have $\dim W^s(x_{i}) + \dim W^u(x_{i+1}) = m$ for all $ i = 1, 2, \ldots, k$.
Combining this and Remark \ref{rk.homology} (1), we obtain
\[
	km
	= \sum^k_{i=1} (\dim W^s(x_{i}) + \dim W^u(x_{i+1}))
	= \sum^k_{i=1} (\dim W^s(x_i) + \dim W^u(x_i)) \geq km.
\]
Thus, $\dim W^s(x_i) + \dim W^u(x_i) = m$ for all $ i = 1, 2, \ldots, k$.
This implies that $\dim W^u(x_i)$ is constant for all $ i = 1, 2, \ldots, k$. Let $u$ be the dimension of the unstable manifold $W^u(x_1)$.

Now take $r_i \in \Lambda_i$ and $p_{i+1} \in \Lambda_{i+1}$ with $x_i \in W^u(r_i) \cap W^s(p_{i+1})$.
By Remark \ref{rk.PETROV2015prop2}, $x_i$ is a $C^0$-transversal intersection of $W^u(r_i)$ and $W^s(p_{i+1})$ for all $i = 1, 2, \ldots, k$ (where $p_{k+1} = p_1$). Thus, there is $\overline{d} > 0$ such that there is $x'_i \in W^u(r'_i) \cap W^s(p'_{i+1})$ near $x_i$ for all $r'_i \in \Lambda_i$ and $p'_{i+1} \in \Lambda_{i+1}$ with $\max \{ \dist(r'_i, r_i), \dist(p'_{i+1}, p_{i+1}) \} < \overline{d}$.
Since $f$ satisfies the $T^{s, u}$-condition, we see that  the $T^{s, u}$-condition is satisfies at $x'_i$.
It follows from the fact that periodic points are dense in each basic set $\Lambda_i$ that there is a periodic point $p'_i \in \Lambda_i$ and $k_i > 0$ such that
\[
	\dist(p'_i, p_i), \dist(f^{k_i}(p'_i), r_i) < \overline{d}, \quad i = 1, 2, \ldots, k.
\]
Replacing $(p'_i, f^{k_i}(p'_i), x'_i)$ by $(p_i, r_i, x_i)$, we have that:
\begin{itemize}
	\item $p_i$ and $r_i$ are in the same periodic orbit contained in $\Lambda_i$, and $r_i = f^{k_i}(p_i)$ for each $i = 1, 2, \ldots, k$.
	\item $x_i$ is a wandering point in $W^u(r_i) \cap W^s(p_{i+1})$ for each $i = 1, 2, \ldots, k$.
\end{itemize}
Since $x_1$ is a wandering point, there is a neighborhood $U$ of $x_1$ satisfying $f^n(U) \cap U = \emptyset$ for all $n \geq 1$,
containing an embedded $u$-dimensional closed ball $D$ transversal to $W^s(p_2)$. 
Since $p_2$ is periodic, $f^\ell(D)$ and $W^s(p_2, \alpha/2)$ have a transversal intersection for some $\ell>0$.
In particular, the intersection of $f^\ell(D)$ and $W^s(p_2, \alpha/2)$ satisfies the $T^{s,u}$-condition.

By Lemma \ref{lem.C0transstarimplyeclosed}, we have $\beta > 0$ and $D_0 \subset f^l(D)$ with $D_0 \in \D(\alpha, W^u(p_2, \beta) \times D^0)$.
Then, apply Lemma \ref{lem.closeddiscchange} and \ref{lem.basicofepsilonclosever2} repeatedly to see that 
\[
	f^{k_2}(D_0) \in \D(\lambda^{k_2}\alpha, W^u(r_2, \beta) \times D^0),
\]
and then Lemma \ref{lem.lemforTsuimpliesnocycle} with $(p, q)$ replaced by $(r_2, p_3)$ to show
\[
	f^{k_2+\ell_2}(D_0) \in \D(\alpha, W^u(p_3, \beta) \times D^0)
\]
for some $l_2 > 0$.
Now continue this process for $r_3$, $p_4$, and so on.
As a consequence, we deduce that there is $L > 0$ such that
\[
	f^{L}(D_0) \in \D(\alpha, W^u(r_1, \beta) \times D^0).
\]
Let $\pi_1$ be the period of $r_1$.
Then, apply Lemma \ref{lem.closeddiscchange} and \ref{lem.basicofepsilonclosever2} repeatedly to see 
\[
	f^{L+n\pi_1}(D_0) \in \D(\lambda^{n\pi_1}\alpha, W^u(r_1, \beta) \times D^0), \quad n \geq 1.
\]
Since $f^{L+n\pi_1}(D_0)$ is ``$\lambda^{n\pi_1}\alpha$-close'' to $W^u(r_1, \beta)$ in the sense of Remark \ref{rk.motivationofD(a(w(xb)))},
we have $f^{L+n\pi_1}(D_0) \cap \Orb^-(U) \neq \emptyset$.
This and $f^{L+n\pi_1}(D_0) \subset f^{L+n\pi_1+\ell}(D) \subset \Orb^+(U)$ exhibit a contradiction.

\qed
\section{Proof of Theorem \ref{thm.homologyshadowing}}
In this section, we prove Theorem \ref{thm.homologyshadowing} by extending the proof in \cite{P.S.trans} through homological methods in Section \ref{sec.homologicalinclinationlemma}.
Suppose that $f$ is an Axiom A diffeomorphism satisfying the $T^{s, u}$-condition.

For a pseudotrajectory $\xi = \{ x_k ; k \in \Z \}$ and integers $l \leq l'$, we denote
\[
	\xi^{l, l'} = \{ x_k ; l \leq k \leq l' \}, \quad \xi^l_- = \{ x_k ; k \leq l \}, \quad \text{and} \quad \xi^l_+ = \{ x_k ; k \geq l \}.
\]
For two basic sets $\Lambda_i$ and $\Lambda_j$, we write $\Lambda_i \to \Lambda_j$ if $W^u(\Lambda_i) \cap W^s(\Lambda_j) \setminus (\Lambda_i \cup \Lambda_j) \neq \emptyset$.

To prove the main theorem, we classify the pseudotrajectories by their staying time in neighborhoods of hyperbolic sets and the time spending between the neighborhoods of hyperbolic sets (Figure \ref{figure.PT(LKs)}).
\begin{dfn}\label{dfn.PT(LKs)}
	For $L, N, s_0 > 0$ and $d>0$,
	denote by $PT(L, N, s_0, d)$ the set of all $d$-pseudotrajectories $\xi = \{x_k\}$ of $f$ such that there exist basic sets $\Lambda_1, \ldots, \Lambda_{\overline{s}}$ with $0 \leq \overline{s} \leq s_0$, indices $l \in \Z$ and
	\begin{equation}
		0 = \tau(1) < t(2) \leq \tau(2) < t(3) \leq \tau(3) < \cdots < t(\overline{s}-1) \leq \tau(\overline{s}-1) < t(\overline{s})\label{eq.taut}
	\end{equation}
	satisfying:
	\begin{itemize}[leftmargin=0.4cm, itemindent=0cm]
		\item  $0 < t(i+1) - \tau(i) \leq L$, $i = 1, \ldots, \overline{s}-1$;
		\item $\tau(i) - t(i) \geq N$, $i = 2, \ldots, \overline{s}-1$;
		\item $\xi^{l+t(i), l + \tau(i)} \in W_i$, $i = 2, \ldots, \overline{s}-1$; and
		\item $\xi^{l+\tau(1)}_- \in W_1$ and $\xi^{l+t(\overline{s})}_+ \in W_{\overline{s}}$.
	\end{itemize}
\end{dfn}
\begin{figure}[h]
	\centering
	\includegraphics[width=12cm]{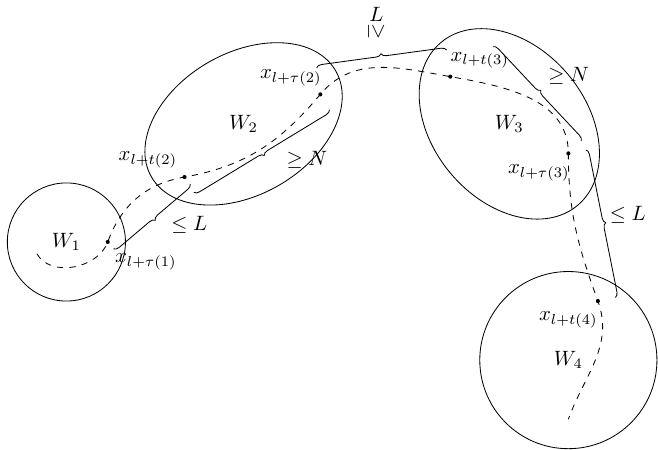}
	\caption{$\xi = \{x_k\} \in PT(L, N, s_0, d)$. In this case, $\overline{s} = 4$.}\label{figure.PT(LKs)}
\end{figure}

We need the following two lemmas in \cite{TransandCopensets}.
\begin{lem}[{\cite[Lemma 4.1]{TransandCopensets}}]\label{lem.birkhoffconstant}
	Let $f$ be a homeomorphism of a compact metric space and let $U$ be a neighborhood of $\Omega(f)$. Then, there exist positive numbers $d, L > 0$ such that if $\xi = \{ x_k ; k \in \Z \}$ is a $d$-pseudotrajectory of $f$ and $\xi^{l, l'} \cap U = \emptyset$ for some $l \leq l'$, then $l' - l \leq L$.
\end{lem}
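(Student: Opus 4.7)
The plan is to argue by contradiction, using the standard compactness trick that extracts a genuine orbit from pseudotrajectories with shrinking error. Suppose the conclusion fails; then for every pair $(d,L)$ with $d, L > 0$ there is a $d$-pseudotrajectory of $f$ possessing a segment of length strictly greater than $L$ that is disjoint from $U$. Taking $d_n = 1/n$ and $L_n = n$, choose $d_n$-pseudotrajectories $\xi^{(n)} = \{x^{(n)}_k\}_{k \in \Z}$ together with indices $l_n \le l'_n$ satisfying $l'_n - l_n > n$ and $x^{(n)}_k \notin U$ for all $l_n \le k \le l'_n$. After shifting the time parameter by $-\lfloor (l_n + l'_n)/2 \rfloor$, we may assume that the bad segment is centred at $0$; in particular, for every fixed $k \in \Z$ and all sufficiently large $n$, the index $k$ lies inside the bad segment of $\xi^{(n)}$, so $x^{(n)}_k \in M \setminus U$.

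Using compactness of $M$ and a diagonal extraction, pass to a subsequence, still denoted $\{\xi^{(n)}\}$, such that $x^{(n)}_k \to y_k$ as $n \to \infty$ for every $k \in \Z$. Since $M \setminus U$ is closed, each $y_k$ lies in $M \setminus U$. For every $k$ we have $\dist(f(x^{(n)}_k), x^{(n)}_{k+1}) < d_n = 1/n$, so letting $n \to \infty$ and using continuity of $f$ yields $f(y_k) = y_{k+1}$. Hence $\{y_k\}_{k \in \Z}$ is the genuine bi-infinite orbit of $y_0$ under $f$, and the entire orbit is contained in $M \setminus U$.

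To finish, take the $\omega$-limit set of $y_0$. Since $\{f^k(y_0)\}_{k \ge 0}$ lives in the compact set $M$, $\omega(y_0)$ is nonempty, and because $M \setminus U$ is closed, $\omega(y_0) \subset M \setminus U$. On the other hand, every $\omega$-limit point is nonwandering, so $\omega(y_0) \subset \Omega(f) \subset U$, a contradiction.

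The only mildly delicate step is the diagonal extraction: one must first recentre each bad segment at its midpoint so that every fixed index $k$ eventually belongs to the bad segment, guaranteeing that the limit $y_k$ is defined for all $k \in \Z$ and lies in the closed set $M \setminus U$. Once this is arranged, the remainder is the standard ``pseudo-orbits with vanishing error converge to true orbits'' argument together with the elementary fact that bounded semi-orbits have nonempty limit sets inside $\Omega(f)$.
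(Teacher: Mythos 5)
Your argument is correct: recentring the bad segments, extracting a diagonal subsequence to obtain a full orbit $\{y_k\}$ avoiding a neighborhood of $\Omega(f)$, and then contradicting $\emptyset \neq \omega(y_0) \subset \Omega(f)$ is exactly the standard proof of this fact. The paper itself gives no proof (the lemma is quoted verbatim from the cited reference), so there is nothing to compare against; the only cosmetic point is that if $U$ is not assumed open you should first replace it by an open neighborhood $V \subset U$ of $\Omega(f)$, so that $M \setminus V$ is closed and the limits $y_k$, and hence $\omega(y_0)$, genuinely avoid $\Omega(f)$.
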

By Proposition \ref{pro.starimpliesnocycle} and our hypothesis, note that $f$ is $C^1$ $\Omega$-stable.
\begin{lem}[{\cite[Lemma 4.2]{TransandCopensets}}]\label{lem.connofbasicsets}
	Let $f$ be a $C^1$ $\Omega$-stable diffeomorphism. Then there exist compact neighborhoods $V_i$, $i=1, 2, \ldots, s$, of basic sets of $f$ and a number $d > 0$ with the following property:
	if $\xi = \{x_k\}$ is a $d$-pseudotrajectory of $f$ such that $x_0 \in V_i$ and $x_l \in V_j$ for some $l > 0$, then there exist basic sets $\Lambda_{l_i}, \ldots, \Lambda_{l_k}$ such that
	\[
		\Lambda_i \to \Lambda_{l_1} \to \cdots \to \Lambda_{l_k} \to \Lambda_j.
	\]
\end{lem}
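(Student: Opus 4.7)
The plan is to invoke the structural theory of $\Omega$-stable diffeomorphisms to build an adapted filtration, and then use that filtration to track how pseudotrajectories can only descend through the partial order of basic sets. First I would apply Smale's filtration theorem: since $f$ is $\Omega$-stable it satisfies Axiom A together with the no-cycles condition, which yields a filtration $\emptyset = M_0 \subsetneq M_1 \subsetneq \cdots \subsetneq M_s = M$ of compact submanifolds with $f(M_j) \subset \mathrm{int}(M_j)$ and each $M_j \setminus \mathrm{int}(M_{j-1})$ containing exactly one basic set as its maximal invariant set; after relabeling, $\Lambda_j \subset M_j \setminus \mathrm{int}(M_{j-1})$. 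I would then pick pairwise disjoint compact neighborhoods $V_i \subset \mathrm{int}(M_i) \setminus M_{i-1}$ of $\Lambda_i$ with $\overline{V_i} \cap \Omega(f) = \Lambda_i$, and choose $d > 0$ small enough that every $d$-pseudotrajectory obeys the filtration constraint: $x_k \in M_j$ implies $x_{k+1} \in M_j$. Such $d$ exists because $M_j$ is compact and $f(M_j) \subset \mathrm{int}(M_j)$. For the $\xi$ of the statement, this forces $j \le i$ and, more generally, the sequence of filtration levels visited is non-increasing in $k$.

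The core step is the \emph{direct-transit assertion}: if some sub-pseudotrajectory $\xi^{k, k'}$ enters $V_a$ at $k$, leaves it, and reaches $V_b$ at $k'$ without entering any other $V_m$, then $\Lambda_a \to \Lambda_b$. To prove this I would take a sequence $d_n \to 0$ of such direct-transit pseudotrajectories; by Lemma \ref{lem.birkhoffconstant} the transit time outside $\bigcup_m V_m$ is bounded above by some $L$, so the sub-pseudotrajectories have uniformly bounded length. Extracting a convergent subsequence yields a true orbit segment $y, f(y), \ldots, f^N(y)$ with $y \in \overline{V_a}$, $f^N(y) \in \overline{V_b}$, and the intermediate points lying outside $\bigcup_{m \ne a,b} V_m$. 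Because $f(M_b) \subset \mathrm{int}(M_b)$ and $\overline{V_b} \cap \Omega(f) = \Lambda_b$, the forward orbit of $f^N(y)$ must accumulate on $\Lambda_b$, so $f^N(y) \in W^s(\Lambda_b)$; symmetrically, by running time backward in $V_a$, one gets $y \in W^u(\Lambda_a)$. Disjointness of $V_a$ and $V_b$ guarantees the connecting orbit is not contained in $\Lambda_a \cup \Lambda_b$, witnessing $\Lambda_a \to \Lambda_b$. Applying this assertion to each consecutive pair of visited neighborhoods along $\xi$ produces the required chain $\Lambda_i \to \Lambda_{l_1} \to \cdots \to \Lambda_{l_k} \to \Lambda_j$.

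The main obstacle I anticipate is reconciling the two roles played by $d$: the filtration argument wants $d$ small and fixed for the chosen $V_i$, while the direct-transit step wants to let $d \to 0$. The resolution is to prove the direct-transit assertion once, uniformly in the choice of $V_i$, before selecting the working $d$; the uniform bound $L$ from Lemma \ref{lem.birkhoffconstant} together with compactness of $\overline{V_i}$ and continuity of $f$ then supply the convergent subsequence without any circularity. A secondary technical point is the degenerate case where $\xi$ never leaves $V_i$ (so $i = j$ and the chain is empty), which must be handled by convention, and the observation that the non-increasing filtration level ensures that the visited neighborhoods appear in a well-defined linear order along $\xi$, so that the chain assembled from direct transits is genuinely a chain and not merely a cycle of arrows.
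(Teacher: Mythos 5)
The paper does not prove this lemma at all (it is quoted from \cite[Lemma 4.2]{TransandCopensets}), so your argument has to stand on its own, and it has a genuine gap at the ``direct-transit'' step. From the limiting orbit segment $y, f(y), \ldots, f^N(y)$ with $y \in \overline{V_a}$ and $f^N(y) \in \overline{V_b}$ you conclude $f^N(y) \in W^s(\Lambda_b)$ ``because $f(M_b) \subset \mathrm{int}(M_b)$ and $\overline{V_b} \cap \Omega(f) = \Lambda_b$'', and symmetrically $y \in W^u(\Lambda_a)$. Neither deduction is valid: forward invariance of $M_b$ only confines the forward orbit of $f^N(y)$ to $M_b$, so its $\omega$-limit set lies in some basic set of filtration level $\leq b$, not necessarily in $\Lambda_b$ (the orbit is free to leave $\overline{V_b}$ and descend further); and the backward orbit of $y$ is completely uncontrolled, since the pseudotrajectory may touch $V_a$ for a single step and the limit only gives $y \in \overline{V_a}$, so its $\alpha$-limit set can be any basic set of level $\geq a$. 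Hence the limit orbit merely passes through the two neighborhoods and witnesses a connection between possibly different basic sets, and the chain you assemble need not start at $\Lambda_i$ nor end at $\Lambda_j$. A symptom that the assertion is too strong as formulated: applied to a pseudotrajectory that leaves $V_a$ and re-enters $V_a$ without meeting any other $V_m$ (true orbits in $W^s(\Lambda_a)$ do this for many admissible compact neighborhoods), it would yield $\Lambda_a \to \Lambda_a$, a $1$-cycle, contradicting the no-cycle condition.

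The missing ingredient is precisely the hard part of the lemma. One correct route is a cloud-lemma/induction argument: use long sojourn times inside an isolating neighborhood of $\Lambda_a$ together with local maximality (a point whose entire backward orbit stays in the isolating neighborhood lies in $W^u(\Lambda_a)$) to manufacture genuine points of $W^u(\Lambda_a)$ at exit times, then iterate over basic sets, using the no-cycle condition to make the chain terminate at $\Lambda_j$; your bounded-length limit segments never produce such points. Another route is to use filtrations adapted not to one linear extension but to every down-set of the connection order: if there is no chain from $\Lambda_i$ to $\Lambda_j$, choose a linear extension in which the basic sets reachable from $\Lambda_i$ form an initial segment, build Smale's filtration for that order, put $V_i$ inside and $V_j$ outside the corresponding forward-invariant compact set, and let smallness of $d$ trap the pseudotrajectory; intersecting over the finitely many bad pairs gives the stated $V_i$ and $d$. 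Related to this, your conditions on the $V_i$ (contained in $\mathrm{int}(M_i)\setminus M_{i-1}$, pairwise disjoint, meeting $\Omega(f)$ only in $\Lambda_i$) do not force the smallness the statement needs: a connecting orbit from a basic set above $\Lambda_a$ to $\Lambda_b$ can thread through an admissible but too-large $V_a$ and then reach $V_b$ even when no chain from $\Lambda_a$ to $\Lambda_b$ exists, so with your choice the conclusion can simply fail; identifying the correct smallness conditions is exactly what the cited proof has to do, and your single-filtration setup does not capture it.
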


First we apply Lemmas \ref{lem.birkhoffconstant} and \ref{lem.connofbasicsets} and replace the neighborhoods $\{W_i\}$ smaller if necessary, to find numbers $d_0, L_0 > 0$ such that $\xi \in PT(L_0,1,s,d_0)$ for all $d_0$-pseudotrajectories $\xi$.
In addition, by \cite{S.P.}, we may also assume that $f$ has the Lipschitz shadowing property in each $W_i$.
\begin{rk}
	It is obvious that the lemmas in Section \ref{sec.homologicalinclinationlemma} still hold even if we replace $W_i$, $1 \leq i \leq s$, by some smaller ones.
\end{rk}

Theorem \ref{thm.homologyshadowing} follows from the following two propositions:
\begin{pro}\label{pro.eachstep}
	For every $\e > 0$ and $L \geq L_0$, there exist $d>0$ and $N \in \Z_{>0}$ such that if $\xi \in PT(L,N+1,s,d)$, then $\xi$ is $\e$-shadowed by some point $x \in M$.
\end{pro}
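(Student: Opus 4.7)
The plan is to adapt the classical inclination-lemma proof of shadowing (Morimoto--Sawada--Robinson, see \cite{R}) to the homological setting of Section \ref{sec.homologicalinclinationlemma}, using the disk families $\D(\alpha, \widetilde{W}^u(x, \beta) \times D^k)$ from Section \ref{sec.prepforthm.homologyshadowing} as the building blocks. By Remark \ref{rk.motivationofD(a(w(xb)))}, membership in such a family means the disk is uniformly $\alpha$-close to the local unstable manifold through its base point, and the lemmas of Section \ref{sec.homologicalinclinationlemma} show these families are closed under forward iteration, small perturbations, base-point shifts, and the $T^{s,u}$-transition. The goal is to build, along the pseudotrajectory $\xi \in PT(L, N+1, s, d)$ visiting basic sets $\Lambda_{i_1}, \ldots, \Lambda_{i_{\overline s}}$ with $\overline s \leq s$, a nested chain of preimage sets whose intersection contains the shadowing point.

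I would begin with the tail $\xi^{l+\tau(1)}_-$ lying in $W_1$: by the Lipschitz shadowing property of $f$ restricted to $W_1$, this one-sided piece is tracked by a single point which, by Claim \ref{claim.condonWi}(3), sits on $W^s(q_1, C)$ for some $q_1 \in \Lambda_{i_1}$. A small local unstable disk of this point, transported to $x_{l+\tau(1)}$, provides an initial disk $D_1 \in \D(\alpha, \widetilde{W}^u(q_1, \beta) \times D^{0})$. I would then propagate $D_1$ forward along $\xi$: on each transition block of length at most $L$, combine Lemma \ref{lem.closeddiscchange} with Lemma \ref{lem.smallpurtforclosedball} (to absorb the $d$-step error from $\dist(f(x_k), x_{k+1}) < d$) and Lemma \ref{lem.purtaofbase} (to shift the base point to a point of the next basic set chosen via Lemma \ref{lem.connofbasicsets}); on each long stay in $W_{i_j}$ of length $\geq N$, iterate Lemmas \ref{lem.closeddiscchange} and \ref{lem.basicofepsilonclosever2}, each application contracting the first parameter by $\lambda$, so that for $N$ large the parameters are restored to values $\leq \alpha, \geq \beta$. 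At the exit of each $W_{i_j}$, Lemma \ref{lem.C0transstarimplyeclosed} applied using the $T^{s,u}$-condition between the current disk and $W^s(\Lambda_{i_{j+1}})$ supplies a new $\D$-family disk adapted to the next basic set, of appropriate (possibly lower) dimension reflecting the drop in unstable dimension along the chain.

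Iterating this procedure through all $\overline s \leq s$ basic sets produces a final disk near $x_{l+t(\overline s)}$. The forward tail $\xi^{l+t(\overline s)}_+ \subset W_{i_{\overline s}}$ is shadowed by Lipschitz shadowing inside $W_{i_{\overline s}}$, selecting a unique point along the stable direction within this final disk. Pulling back under the composed iterates of $f$ and intersecting yields a decreasing sequence of nonempty compact subsets of $M$, and any point in the intersection $\e$-shadows $\xi$, the block-wise error being bounded by $C_0 \alpha$ via \eqref{eq.constofequivmetric}, hence globally by $\e$ for small initial $\alpha$.

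The main obstacle will be the quantitative bookkeeping. Each transition inflates the first $\D$-parameter additively by $\Delta$ (Lemma \ref{lem.purtaofbase}) and shrinks the second by a fixed $\e_0$ (Lemmas \ref{lem.smallpurtforclosedball}, \ref{lem.generalizedpurt}), while each step of a long stay contracts the first by $\lambda$. One must choose $\alpha_0, \beta_0, d$, and $N$ so that after at most $s-1$ transitions the parameters still lie in the admissible range $(0, \overline\alpha)$ while the accumulated shadowing error stays below $\e$; since the number of transitions is capped by $s$ independent of $\xi$, this can be arranged by a finite recursion, with $N \geq N_0(L, \e, \lambda, s)$ and $d \leq d_0(L, \e, \lambda, s)$. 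A secondary subtlety is the dimension accounting at each transition: Remark \ref{rk.homology}(1) applied to the existence of heteroclinic points $\Lambda_{i_j} \to \Lambda_{i_{j+1}}$ forces the inequality needed so that Lemma \ref{lem.C0transstarimplyeclosed} applies with the correct output dimension at every stage.
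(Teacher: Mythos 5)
Your overall strategy coincides with the paper's: seed a disk $D=\widetilde W^u(y,2\beta_0)$ from the backward tail (the paper's Claim \ref{claim.7}), push it forward through the families $\D(\alpha,\widetilde W^u(x,\beta)\times D^k)$ using the Section \ref{sec.homologicalinclinationlemma} lemmas, and extract the shadowing point from the nested compacta $D_\ell=\overline{f^{-\ell}(V^u(h_\ell\times\eta_\ell))}$. The parameter bookkeeping you describe (contraction by $\lambda$ during long stays, additive inflation $\Delta$ and shrinkage of $\beta$ at transitions, restoration of $\beta$ via a factor $1/\mu$ per step, finitely many transitions) is exactly what the paper does via Proposition \ref{pro.generalizedcloseddiscchange} and the choice of $N_1$ in \eqref{eq.defofN1first}--\eqref{eq.defofN1second}.

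There is, however, a genuine gap at the transition step. You propose to apply Lemma \ref{lem.C0transstarimplyeclosed} ``between the current disk and $W^s(\Lambda_{i_{j+1}})$,'' but the $T^{s,u}$-condition (and hence Lemma \ref{lem.C0transstarimplyeclosed}) is only available for genuine invariant manifolds $W^u(P)$, $W^s(Q)$ with $P,Q\in\Omega(f)$ --- not for the propagated disk, which is merely homologically $\alpha$-close to an unstable disk, and not at the pseudotrajectory points $x_k$, which need not lie on any invariant manifold. More seriously, the constants that Lemma \ref{lem.C0transstarimplyeclosed} produces ($\beta_1$, the ball $D_0$, the $\delta$ of Claim \ref{claim.presmallpurtforclosed ball}) depend on the particular heteroclinic point at which the $T$-condition is invoked, and a priori degenerate as that point varies; yet Proposition \ref{pro.eachstep} demands a single $d$ and $N$ valid for \emph{all} $\xi\in PT(L,N+1,s,d)$. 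Your ``finite recursion over at most $s-1$ transitions'' controls only the number of transitions, not the uniformity over the infinitely many near-heteroclinic configurations a pseudotrajectory may realize. The paper closes this gap with Proposition \ref{pro.betweenbasicsets}, proved by contradiction and compactness: a putative sequence of counterexamples with $d_n\to0$, $N(n)\to\infty$ converges (after the pigeonhole reduction of $\tau_n$, dimensions, and $k_n$) to a genuine heteroclinic orbit $p\mapsto q=f^\tau(p)$ with $\Orb^-(p)\subset W_I$ and $\Orb^+(q)\subset W_J$, at which the local Lemma \ref{lem.betweenbasicsets} applies and yields the contradiction. Without this compactness argument (or an equivalent uniformization), your construction does not produce the required uniform $d$ and $N$.
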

\begin{pro}\label{pro.induction}
	Let $L \geq L_0$, $N \in \Z_{> 0}$, $3 \leq s_0 \leq s$ and $d > 0$ be given.
	Then
	\[
		PT(L, 1, s_0, d) \subset PT(L,N+1,s_0, d) \cup PT((s_0-1)L+(s_0-2)N, 1, s_0-1,d).
	\]
\end{pro}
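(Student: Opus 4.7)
The plan is a purely combinatorial manipulation of the data witnessing membership in $PT(L, 1, s_0, d)$, with no dynamical input beyond Definition \ref{dfn.PT(LKs)}.

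Fix $\xi \in PT(L, 1, s_0, d)$ together with a witness: basic sets $\Lambda_1, \ldots, \Lambda_{\overline{s}}$ with $\overline{s} \leq s_0$, an integer $l$, and times $\tau(i), t(i)$ as in the definition. I would split into three cases. If $\overline{s} \leq s_0 - 1$, the same witness shows $\xi \in PT(L, 1, s_0 - 1, d)$, and since $s_0 \geq 3$ forces $L \leq (s_0-1)L + (s_0-2)N$, $\xi$ already lies in the second term of the asserted union. If $\overline{s} = s_0$ and every middle stay satisfies $\tau(i) - t(i) \geq N+1$ for $i = 2, \ldots, s_0 - 1$, the same witness instead places $\xi$ in $PT(L, N+1, s_0, d)$.

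In the remaining case $\overline{s} = s_0$, some index $i_0 \in \{2, \ldots, s_0 - 1\}$ satisfies $\tau(i_0) - t(i_0) \leq N$. I construct a new witness by \emph{deleting} the $i_0$-th entry: take basic sets $\Lambda_1, \ldots, \Lambda_{i_0 - 1}, \Lambda_{i_0 + 1}, \ldots, \Lambda_{s_0}$ (of length $s_0 - 1$), with times $\tau'(j) = \tau(j)$, $t'(j) = t(j)$ for $j < i_0$ and $\tau'(j) = \tau(j+1)$, $t'(j) = t(j+1)$ for $j \geq i_0$. The stay-in-$W_j$ conditions for $j \neq i_0$ survive unchanged, and so do the extremal conditions $\xi^{l + \tau'(1)}_- \in W_1$ and $\xi^{l + t'(s_0 - 1)}_+ \in W_{s_0}$, which reduce to the original $\xi^{l + \tau(1)}_- \in W_1$ and $\xi^{l + t(s_0)}_+ \in W_{s_0}$ (since $\tau'(1) = \tau(1) = 0$ and $t'(s_0 - 1) = t(s_0)$). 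The only modified transit time is the merged gap across the deleted stay, for which
\[
  t'(i_0) - \tau'(i_0 - 1) = \bigl(t(i_0 + 1) - \tau(i_0)\bigr) + \bigl(\tau(i_0) - t(i_0)\bigr) + \bigl(t(i_0) - \tau(i_0 - 1)\bigr) \leq L + N + L = 2L + N.
\]
Since $s_0 \geq 3$ gives $2L + N \leq (s_0 - 1)L + (s_0 - 2)N$, the new witness places $\xi$ in $PT((s_0-1)L + (s_0-2)N, 1, s_0 - 1, d)$, completing the dichotomy.

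I do not anticipate any real obstacle: the proof is pure index bookkeeping. The only minor subtlety is handling the boundary sub-cases $i_0 = 2$ and $i_0 = s_0 - 1$, in which the deleted stay borders the one-sided initial or terminal segment; in both situations the extremal conditions transport unchanged, as indicated, so no separate treatment is needed. The apparently generous bound $(s_0-1)L + (s_0-2)N$ in the statement presumably accommodates iteration of this removal (simultaneously deleting many consecutive short middle stays would yield precisely that worst-case transit length); but for a single application, the sharper bound $2L + N$ already suffices.
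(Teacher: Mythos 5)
Your proof is correct and follows essentially the same route as the paper: the paper also splits on whether every middle stay exceeds $N$, and if not, deletes the short stays and bounds the merged transit times by the telescoping sum $(i_{j+1}-i_j)L+(i_{j+1}-i_j-1)N$. The only difference is that the paper removes \emph{all} short middle stays at once (which is exactly why the bound $(s_0-1)L+(s_0-2)N$ appears), whereas you remove a single one and then use monotonicity in the $L$-parameter; both work, and your closing remark about the provenance of that bound is precisely what the paper does.
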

\begin{proof}
	Let $\xi \in PT(L, 1, s_0, d) \setminus PT(L,N+1,s_0, d)$.
	Since $\xi \in PT(L, 1, s_0, d)$, there exist basic sets $\Lambda_1, \ldots, \Lambda_{\overline{s}}$ with $0 \leq \overline{s} \leq s_0$ and
	\[
		0 = \tau(1) < t(2) \leq \tau(2) < t(3) \leq \tau(3) < \cdots < t(\overline{s}-1) \leq \tau(\overline{s}-1) < t(\overline{s})
	\]
	satisfying the four condition on Definition \ref{dfn.PT(LKs)} for some $l \in \Z$.
	Let
	\begin{align}
		{\cal I} & = \{ 1, \overline{s} \} \cup \{ 1 < i < \overline{s} ; \tau(i) - t(i) \geq N + 1 \} \\
		         & = \{1 = i_1 < i_2 < \cdots < i_{k-1} < i_k = \overline{s}\}.
	\end{align}
	Notice that $k < \overline{s} \leq s_0$ since $\xi \notin PT(L,N+1,s_0,d)$.
	Let $L' = (s_0-1)L+(s_0-2)N$.
	We need to prove that basic sets $\Lambda_{i_1}, \ldots, \Lambda_{i_k}$ with $0 \leq k \leq s_0-1$ and
	\[
		0 = \tau({i_1}) < t({i_2}) \leq \tau({i_2}) < t({i_3}) \leq \tau(i_3) < \cdots < t(i_{k-1}) \leq \tau(i_{k-1}) < t(i_k)
	\]
	satisfy the four condition on Definition \ref{dfn.PT(LKs)} for some $l \in \Z$ with $(L, N, s_0,d)$ replaced by $(L', N+1, s_0-1,d)$. This implies $\xi \in PT((s_0-1)L+(s_0-2)N, 1, s_0-1, d)$ as required.
	In fact, for each $j = 1, \ldots, k-1$, we have
	\begin{align}
		t(i_{j+1}) - \tau(i_j)
		 & = (t(i_{j+1}) - \tau(i_{j+1}-1))
		+ (\tau(i_{j+1}-1) - t(i_{j+1}-1))     \\
		 & + (t(i_{j+1}-1) - \tau(i_{j+1}-2))
		+ (\tau(i_{j+1}-2) - t(i_{j+1}-2))     \\
		 & + \cdots + (t(i_j+1) - \tau(i_j))   \\
		 & \leq L + N + L + N + \cdots + L     \\
		 & = (i_{j+1}-i_j)L + (i_{j+1}-i_j-1)N \\
		 & \leq (s_0-1)L + (s_0-2)N            \\
		 & =L',
	\end{align}
	proving the first item.
	Other three items are trivial.
\end{proof}
Before proving Proposition \ref{pro.eachstep}, let us give the proof of Theorem \ref{thm.homologyshadowing}.
\vspace{3mm}
\\
{\bf Proof of Theorem \ref{thm.homologyshadowing}}.
Fix an arbitrary $\e > 0$.
By the choice of $L_0$ and $d_0$, we see that
\begin{equation}
	\xi \in PT(L_0,1,s,d)\label{eq.setofpt}
\end{equation}
for all $d \in (0, d_0)$ if $\xi$ is a $d$-pseudotrajectory.
Applying Proposition \ref{pro.eachstep} with $L = L_0$, we have $d_1 \in (0, d_0)$ and $N_0 \in \Z_{>0}$ such that every $\xi \in PT(L_0,N_0+1,s,d_1)$ is $\e$-shadowed by some point $x \in M$.
Let $L_1 = (s-1)L_0 + (s-2)N_0$.
Again, applying Proposition \ref{pro.eachstep} with $L = L_1$, we have $d_2 \in (0, d_1)$ and $N_1 \in \Z_{>0}$ such that every $\xi = \{x_k\} \in PT(L_1,N_1+1,s-1,d_2)$ is $\e$-shadowed by some point $x \in M$.

Repeating this procedure, we can define $L_0 < L_1 < \cdots < L_{s-2}$, $N_0 < N_1 < \cdots < N_{s-3}$ and $d_1 > d_2 > \cdots > d_{s-2}$ to satisfy the following properties:
\begin{itemize}[leftmargin=0.8cm, itemindent=0cm]
	\item[(a)] Every $\xi \in PT(L_i,N_i+1,s-i,d_{i+1})$ is $\e$-shadowed by some point $x \in M$ for all $i = 0, 1, \ldots, s-3$;
	\item[(b)] $PT(L_i, 1, s-i, d) \subset PT(L_i, N_i+1, s-i, d) \cup PT(L_{i+1}, 1, s-(i+1), d)$ for all $i = 0, 1, \ldots, s-3$ and $d > 0$ (this follows from Proposition \ref{pro.induction}).
\end{itemize}
Applying Proposition \ref{pro.eachstep} again with $L = L_{s-2}$, we have $d \in (0, d_{s-2})$ and $N \in \Z_{>0}$ such that every $\xi \in PT(L_{s-2},N,2,d)$ is $\e$-shadowed by some point $x \in M$.
By Definition \ref{dfn.PT(LKs)},
\[
	PT(L_{s-2},1,2, d) = PT(L_{s-2},N, 2 ,d)
\]
(since these sets does not depend on $N$ when $s_0 \leq 2$).

Now use property (b) repeatedly to obtain that
\begin{align}
	        & PT(L_0,1,s,d)                                                                                            \\
	\subset & PT(L_0, N_0+1, s, d) \cup PT(L_1, 1, s-1, d)                                                             \\
	\subset & PT(L_0, N_0+1, s, d) \cup (PT(L_1, N_1+1, s-1, d) \cup PT(L_2, 1, s-2, d))                               \\
	\subset & PT(L_0, N_0+1, s, d) \cup (PT(L_1, N_1+1, s-1, d) \cup (PT(L_2, N_2+1, s-2, d) \cup PT(L_3, 1, s-3, d))) \\
	\subset & \cdots                                                                                             
	\subset \left( \bigcup_{i=0}^{s-3} PT(L_i,N_i+1,s-i,d_{i+1}) \right) \cup PT(L_{s-2},1,2, d)                       \\
	=       & \left( \bigcup_{i=0}^{s-3} PT(L_i,N_i+1,s-i,d_{i+1}) \right) \cup PT(L_{s-2},N,2, d).
\end{align}
Thus, by the choice of $N$ and property (a), $\xi$ is $\e$-shadowed by some point $x \in M$ if $\xi \in PT(L_0,1,s,d)$.
Thus, it follows from \eqref{eq.setofpt} that every $d$-pseudotrajectory $\xi$ belongs to $PT(L_0,1,s,d)$.
This completes the proof of Theorem \ref{thm.homologyshadowing}.

In order to prove Proposition \ref{pro.eachstep}, we need two propositions.

The first one is a multi-dimensional analog of \cite[Lemma 4.3.]{P.S.trans}. Fix $\mu \in (\lambda, 1)$.
\begin{pro}\label{pro.generalizedcloseddiscchange}
	Let $\alpha, \beta > 0$ be given. Suppose that $\Delta > 0$ satisfies the inequality
	\begin{equation}
		\beta/\lambda - \Delta > \beta/\mu.\label{eq.generalizedcloseddiscchange}
	\end{equation}
	Then there exists a number $d > 0$ depending on $\alpha$, $\beta$, and $\Delta$ such that:
	\begin{itemize}[leftmargin=0.6cm, itemindent=0cm]
		\item 
	If $x, f(x), y \in W_i$, $\dist(f(x), y) < d$, and $(D, h \times \eta) \in \D(\gamma, \widetilde{W}^u(x, \beta) \times D^k)$ for some $k \geq 0$ and $\gamma \in (0, \alpha)$, then there is a continuous map $g \times \theta : f(D) \to \widetilde{W}^u(y, \beta/\mu) \times D^k$ for which
	\[
		(f(D), g \times \theta) \in \D(\lambda\gamma + \Delta, \widetilde{W}^u(y, \beta/\mu) \times D^k)
	\]
	and
	\[
		V^u(g \times \theta) \subset f (V^u(h \times \eta))).
	\]
\end{itemize}
\end{pro}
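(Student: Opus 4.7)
The plan is to chain Lemma \ref{lem.closeddiscchange}, which advances a ``close'' disk family by one iterate of $f$ while keeping the base point at $f(x)$, with Lemma \ref{lem.purtaofbase}, which shifts the base point from $f(x)$ to the nearby $y$ at the cost of a small loss in the closeness parameter and a small shrinkage of the unstable disk radius. Concretely, I first apply Lemma \ref{lem.closeddiscchange} to $(D, h \times \eta) \in \D(\gamma, \widetilde{W}^u(x, \beta) \times D^k)$ to obtain a continuous map $g_1 \times \theta_1 : f(D) \to \widetilde{W}^u(f(x), \beta/\lambda) \times D^k$ such that $(f(D), g_1 \times \theta_1) \in \D(\lambda \gamma, \widetilde{W}^u(f(x), \beta/\lambda) \times D^k)$ and $V^u(g_1 \times \theta_1) \subset f(V^u(h \times \eta))$.

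Next, because $f(x), y \in W_i$, I invoke Lemma \ref{lem.purtaofbase} applied to $(f(D), g_1 \times \theta_1)$, with the parameters there replaced as follows: $(\alpha, \beta, \beta', \Delta, x, y) \leadsto (\lambda\alpha, \beta/\lambda, \beta/\mu, \Delta, f(x), y)$. The requirement $\beta/\mu < \beta/\lambda$ holds since $\lambda < \mu < 1$, and the quantitative hypothesis \eqref{eq.generalizedcloseddiscchange}, namely $\beta/\lambda - \Delta > \beta/\mu$, ensures that the ``available shrinkage'' $\beta/\lambda - \beta/\mu$ is strictly larger than $\Delta$; this is precisely the regime in which the argument of Lemma \ref{lem.purtaofbase} (itself relying on Lemma \ref{lem.generalizedpurt}, which trades radius shrinkage against closeness growth) applies cleanly. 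Lemma \ref{lem.purtaofbase} then delivers a constant $d > 0$ depending on $\alpha$, $\beta$, $\Delta$, $\lambda$, $\mu$, and a map $g \times \theta : f(D) \to \widetilde{W}^u(y, \beta/\mu) \times D^k$ with
\[
(f(D), g \times \theta) \in \D(\lambda \gamma + \Delta, \widetilde{W}^u(y, \beta/\mu) \times D^k),
\qquad V^u(g \times \theta) \subset V^u(g_1 \times \theta_1).
\]
Combining the two inclusions of sets $V^u$ yields $V^u(g \times \theta) \subset f(V^u(h \times \eta))$, which is the second desired conclusion.

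The one mild subtlety is uniformity in $\gamma$: the second step produces a $d$ that a priori depends on the closeness input, here $\lambda \gamma$. Since $\gamma \in (0, \alpha)$ implies $\lambda \gamma < \lambda \alpha$ and $\D(\gamma, \cdot) \subset \D(\lambda \alpha, \cdot)$ by the monotonicity of the defining property \textbf{(P2)} in the closeness parameter, it suffices to invoke Lemma \ref{lem.purtaofbase} once with the closeness parameter set to $\lambda \alpha$. The resulting $d$ is then valid for every $\gamma \in (0, \alpha)$, depends only on $\alpha$, $\beta$, $\Delta$ (with $\lambda, \mu$ fixed), and closes the argument. No step here looks truly hard; the only point one has to be careful about is to verify that the hypothesis \eqref{eq.generalizedcloseddiscchange} really matches the prerequisite ``$\beta' \in (0, \beta)$, $\Delta > 0$'' of Lemma \ref{lem.purtaofbase} under the substitution of parameters, and that the nested base point hypothesis ($f(x), y \in W_i$) is compatible with the invariance of the families $\widetilde{W}^u$ on $U_i$ established in Section \ref{sec.prepforthm.homologyshadowing}.
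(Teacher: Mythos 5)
Your proposal follows exactly the paper's route: first Lemma \ref{lem.closeddiscchange} to pass from $\widetilde{W}^u(x,\beta)$ to $\widetilde{W}^u(f(x),\beta/\lambda)$ with closeness $\lambda\gamma$, then Lemma \ref{lem.purtaofbase} to move the base point from $f(x)$ to $y$ at the cost of $\Delta$ in closeness and a shrinkage of the radius from $\beta/\lambda$ to $\beta/\mu$, and finally the chain of $V^u$-inclusions. The only slip is in your uniformity fix: invoking Lemma \ref{lem.purtaofbase} with the closeness parameter set to $\lambda\alpha$ would only yield membership in $\D(\lambda\alpha+\Delta,\cdot)$, which is weaker than the asserted $\D(\lambda\gamma+\Delta,\cdot)$ and would destroy the contraction $\gamma\mapsto\lambda\gamma+\Delta_0$ needed when the proposition is iterated in the proof of Theorem \ref{thm.homologyshadowing}; the correct resolution (as the paper does implicitly) is to apply Lemma \ref{lem.purtaofbase} with the parameter $\lambda\gamma$ itself and observe that the $d$ produced in its proof comes from Claim \ref{claim.condonWi}(2) and the continuity of $z\mapsto\widetilde{W}^u(z)$, hence depends only on $\beta'$ and $\Delta$ and is uniform over $\gamma\in(0,\alpha)$.
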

\begin{proof}
	Let $\Delta > 0$ be such that 
	\[
		\beta/\lambda - \Delta > \beta/\mu.
	\]
	Apply Lemma \ref{lem.closeddiscchange} with $(\alpha, m_0-u)$ replaced by $(\gamma, k)$ to have a continuous map $g' \times \theta' : f(D) \to \widetilde{W}^u(f(x), \beta/\lambda) \times D^k$ such that
	$(f(D), g' \times \theta') \in \D(\lambda\gamma, \widetilde{W}^u(f(x), \beta/\lambda) \times D^k)$ and
	\begin{equation}
		V^u(g' \times \theta')
		\subset f(V^u(h \times \eta)).\label{eq.incofcloseddiscchange}
	\end{equation}
	Next apply Lemma \ref{lem.purtaofbase} with $(\alpha, \beta)$ replaced by $(\lambda\gamma, \beta/\lambda)$ to have a continuous map $g \times \theta : f(D) \to \widetilde{W}^u(y, \beta/\mu) \times D^k$ such that $(f(D), g \times \theta) \in \D(\lambda\gamma + \Delta, \widetilde{W}^u(y, \beta/\mu) \times D^k)$ and
	\[
		V^u(g \times \theta) \subset V^u(g' \times \theta').
	\]
	Here we need $\dist(f(x),y) < d$ with sufficiently small $d > 0$ in order to apply Lemma \ref{lem.purtaofbase}.
	From this and \eqref{eq.incofcloseddiscchange}, we finish the proof.
\end{proof}
The second one is the following proposition. 
\begin{pro}\label{pro.betweenbasicsets}
	For $\alpha, \beta, L > 0$, there exist numbers $\Delta = \Delta(\alpha, \beta, L)$,
	$d = d(\alpha, \beta, L)$, $\beta' = \beta'(\alpha, \beta, L) \leq \beta$, and $N = N(\alpha, \beta, L)$ satisfying the following property:
	\begin{itemize}[leftmargin=0.6cm, itemindent=0cm]
		\item
		      If $\xi = \{ x_\nu \}$ is a $d$-pseudotrajectory of $f$ such that $\xi^{-N, 0} \subset W_i$, $\xi^{\ell, \ell + N} \subset W_j$ for some $i \neq j$ with $1 \leq i, j \leq s$ and $\ell \in [0, L]$, and
		      \begin{equation}
			      (D, g \times \theta) \in \D(\Delta, \widetilde{W}^u(x_0, \beta) \times D^k)\label{eq.betweenbasicsets}
		      \end{equation}
		      for some $0 \leq k \leq m - \dim\widetilde{W}^u(x_0)$,
		      then there exist $k' \geq 0$ with $\dim \widetilde{W}^u(x_0) + k = \dim \widetilde{W}^u(x_\ell) + k'$ and a continuous map $h \times \eta: f^\ell(D) \to \widetilde{W}^u(x_\ell, \beta') \times D^{k'}$ such that 
		      \[
			      (f^\ell(D), h \times \eta) \in \D(\alpha, \widetilde{W}^u(x_\ell, \beta') \times D^{k'})
		      \]
		      and
		      \[
			      V^u(h \times \eta) \subset f^\ell(V^u(g \times \theta)).
		      \]
	\end{itemize}
\end{pro}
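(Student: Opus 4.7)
My plan is to produce, via shadowing inside $W_i$ and $W_j$, a genuine heteroclinic orbit hugging the transit segment of $\xi$; to invoke the $T^{s,u}$-condition at its terminal point to pull in a reference disk in $W^u(p)$; and then to apply Lemma \ref{lem.smallpurtforclosedball} to identify $f^\ell(D)$ with a small perturbation of that reference disk.

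First I would fix $\Delta$ small and $N$ large (with $d$ shrunk last). Applying Lipschitz shadowing inside $W_j$ to $\xi^{\ell,\ell+N}$ yields a true point $y$ close to $x_\ell$ with $y\in W^s(q,C)$ for some $q\in\Lambda_j$ (via Claim \ref{claim.condonWi}(3)); the stable--unstable symmetric version of the same machinery applied to $\xi^{-N,0}\subset W_i$ yields $z$ close to $x_0$ with $z\in W^u(p)$ for some $p\in\Lambda_i$. Since $\ell\le L$ is bounded and $f$ is uniformly Lipschitz, $f^\ell(z)$ is close to $y$; the local product structure in $W_j$ then lets us form the true heteroclinic point $y_\ell:=[f^\ell(z),y]\in W^u(p)\cap W^s(q)$, which remains close to $x_\ell$, and whose pullback $y_0:=f^{-\ell}(y_\ell)\in W^u(p)$ is close to $x_0$.

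By the $T^{s,u}$-condition, $W^s(q)$ and $W^u(p)$ satisfy the $T$-condition at $y_\ell$, so Lemma \ref{lem.C0transstarimplyeclosed}, applied with $x=y_\ell$ (whose forward orbit stays in $W_j$) and $W$ an embedded ball of the appropriate dimension inside $W^u(p)$, produces an embedded ball $\hat D\subset W^u(p)$ centered at $y_\ell$ and a map $\hat h\times\hat\eta$ with $(\hat D,\hat h\times\hat\eta)\in\D(\alpha/2,\widetilde W^u(y_\ell,\beta_0)\times D^{k'})$ for some $\beta_0>0$; here $k'$ is the unique integer with $\dim\widetilde W^u(x_0)+k=\dim\widetilde W^u(x_\ell)+k'$, whose nonnegativity is forced by the dimension inequality in Remark \ref{rk.homology}(1) built into the $T$-condition. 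Because $(D,g\times\theta)\in\D(\Delta,\widetilde W^u(x_0,\beta)\times D^k)$ expresses $D$ as $\Delta$-close (in the stable direction) to $\widetilde W^u(x_0,\beta)$, and $y_0$ is $O(d)$-close to $x_0$ while $\widetilde W^u$ varies continuously with the base point, the iterate $f^\ell(D)$ lies within an arbitrarily small ambient distance of $f^\ell(\widetilde W^u(y_0,\beta))\subset W^u(p)$ near $y_\ell$. Combining this geometric proximity with Lemma \ref{lem.nontrivial} (to handle nontriviality in the unstable and in the $D^{k'}$-directions simultaneously), I would manufacture a $(\dim f^\ell(D))$-nontrivial continuous map $h_1\times\eta_1:f^\ell(D)\to\hat D\times D^0$ whose $C^0$-closeness to the identity meets the threshold $\hat\delta$ given by Claim \ref{claim.presmallpurtforclosed ball} applied to $\hat D$.

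Lemma \ref{lem.smallpurtforclosedball} (with its ``$\ell$'' equal to $0$ and its ``$\e$'' chosen so that $\alpha/2+\e<\alpha$) then upgrades $f^\ell(D)$ to membership in $\D(\alpha,\widetilde W^u(y_\ell,\beta_1)\times D^{k'})$ for some $\beta_1>0$; a final invocation of Lemma \ref{lem.purtaofbase} shifts the base point from $y_\ell$ to $x_\ell$, at the cost of a further slight decrease in $\beta_1$ to the advertised $\beta'$ and a negligible increase in the $\alpha$-parameter that I absorb by shrinking $\e$ further. The inclusion $V^u(h\times\eta)\subset f^\ell(V^u(g\times\theta))$ is then inherited from the inclusions guaranteed in Lemmas \ref{lem.smallpurtforclosedball} and \ref{lem.purtaofbase}, unwound through the definition of $h_1\times\eta_1$ and the fact that points mapped near $y_\ell$ pull back under $f^{-\ell}$ to points in $V^u(g\times\theta)$. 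The main difficulty is the construction of $h_1\times\eta_1$: the transit iterates $x_1,\ldots,x_{\ell-1}$ need not lie in any $U_k$, so the $\D$-structure cannot be propagated step by step and the whole transit must be treated as a single ``jump,'' with the product-factor dimensions reshuffling between the two ends, while the homological nontriviality of $h_1\times\eta_1$ has to be verified by hand through Lemma \ref{lem.nontrivial}.
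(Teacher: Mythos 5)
Your outline reproduces, in essence, the paper's \emph{local} version of the statement (Lemma \ref{lem.betweenbasicsets}): find a reference heteroclinic point, invoke the $T^{s,u}$-condition there, use Lemma \ref{lem.C0transstarimplyeclosed} to produce a reference ball $\hat D$ with its map into a local unstable disk, and then upgrade $f^\ell(D)$ via Lemma \ref{lem.smallpurtforclosedball} and Lemma \ref{lem.purtaofbase}. What is missing is the step that makes the constants uniform, and this is a genuine gap rather than a technicality. Every quantity you extract along the way --- the radius $\beta_0$ coming out of Lemma \ref{lem.C0transstarimplyeclosed}, the threshold $\hat\delta$ from Claim \ref{claim.presmallpurtforclosed ball}, and hence the final $\beta'$ and the admissible $d$ --- is attached to the particular heteroclinic point $y_\ell$, which you build from the particular pseudotrajectory $\xi$. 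The $T$-condition only asserts, for each intersection point separately, the existence of \emph{some} ball $B_u$ and \emph{some} radius making the homology map nontrivial; there is no a priori lower bound on these radii as the intersection point varies. But the Proposition requires $\Delta$, $d$, $\beta'$, $N$ to be functions of $(\alpha,\beta,L)$ alone, fixed before $\xi$ is given; in your scheme $d$ must beat a threshold $\hat\delta$ that is only determined after $\xi$ (hence $y_\ell$, hence $\hat D$) is known, which is circular. The paper resolves exactly this by a compactness/contradiction argument: assuming the Proposition fails, it extracts pseudotrajectories with $\Delta_n, d_n, \beta_n \to 0$ and $N(n)\to\infty$, uses the pigeonhole principle to freeze $\tau_n=\tau$ and the relevant dimensions, lets $x^{(n)}_0\to p$ and $x^{(n)}_{\tau}\to q$ with $f^\tau(p)=q$, $\Orb^-(p)\subset W_I$, $\Orb^+(q)\subset W_J$, and then applies the local lemma at this single limiting heteroclinic pair to contradict the assumed failure for large $n$. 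Some such compactness step is indispensable, and your proposal does not contain it.

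A secondary issue: deducing $y\in W^s(q,C)$ from shadowing the finite segment $\xi^{\ell,\ell+N}$ is not immediate, since Claim \ref{claim.condonWi}(3) requires the entire forward orbit of the point to remain in $W_j$, which a finite-time shadowing point need not satisfy. In the paper this difficulty evaporates because the limit point $q$ automatically satisfies $\Orb^+(q)\subset W_J$ once $N(n)\to\infty$; in your direct construction you would need an additional argument (or a different formulation of the shadowing step) to place $y$ on an actual stable manifold of $\Lambda_j$.
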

In order to prove Proposition \ref{pro.betweenbasicsets}, we need a ``local version'' of the proposition.
\begin{lem}\label{lem.betweenbasicsets}
	Let $\alpha, \beta > 0$ be given.
	Assume that $p \in W_i$ and $q \in W_j$, $1 \leq i, j \leq s$ with $i \neq j$, satisfy $f^\tau(p) = q$ for some $\tau \geq 1$ and
	\[
		\Orb^-(p) \subset W_i, \quad \Orb^+(q) \subset W_j.
	\]
	Then there exist numbers $\Delta$,
	$\beta' \leq \beta$, and $\Delta' > 0$ satisfying the following property:
	\begin{itemize}[leftmargin=0.6cm, itemindent=0cm]
		\item If $p' \in W_i$ and $q' \in W_j$ satisfy $\max \{ \dist(p', p), \dist(q', q) \} < \Delta'$, and
		      \begin{equation}
			      (D, g \times \theta) \in \D(\Delta, \widetilde{W}^u(p', \beta) \times D^k)\label{eq.lem_betweenbasicsets}
		      \end{equation}
		      for some $0 \leq k \leq m - \dim\widetilde{W}^u(p')$,
		      then there exist $k' \geq 0$ with $\dim \widetilde{W}^u(p') + k = \dim \widetilde{W}^u(q') + k'$ and a continuous map $h \times \eta: f^\tau(D) \to \widetilde{W}^u(q', \beta') \times D^{k'}$ such that 
		      \[
			      (f^\tau(D), h \times \eta) \in \D(\alpha, \widetilde{W}^u(q', \beta') \times D^{k'})
		      \]
		      and
		      \[
			      V^u(h \times \eta) \subset f^\ell(V^u(g \times \theta)).
		      \]
	\end{itemize}
\end{lem}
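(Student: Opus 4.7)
The strategy parallels Lemma \ref{lem.lemforTsuimpliesnocycle}: exploit the $T^{s,u}$-condition (assumed throughout Section \ref{sec.homologicalinclinationlemma}) at the heteroclinic endpoint $q$, combined with forward iteration of $D$ and a final matching step via Lemma \ref{lem.smallpurtforclosedball} and Lemma \ref{lem.purtaofbase}.

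First, I would identify the heteroclinic structure at $q$. Claim \ref{claim.condonWi}(3), applied to $q$ using $\Orb^+(q) \subset W_j$, yields $q \in W^s(z, C)$ for some $z \in \Lambda_j$; its backward analog (Claim \ref{claim.condonWi}(3) applied to $f^{-1}$), using $\Orb^-(p) \subset W_i$, yields $p \in W^u(r, C)$ for some $r \in \Lambda_i$. Thus $q = f^\tau(p) \in W^u(f^\tau(r)) \cap W^s(z)$ is a heteroclinic intersection. Writing $u_i = \dim \widetilde{W}^u(p')$ (in the $U_i$-system) and $u_j = \dim \widetilde{W}^u(q')$ (in the $U_j$-system), the $T^{s,u}$-condition ensures that $W^s(z)$ and $W^u(f^\tau(r))$ satisfy the $T$-condition at $q$, which forces $u_j \le u_i$. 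I would then apply Lemma \ref{lem.C0transstarimplyeclosed} with $x = q$ and $W = W^u(f^\tau(r), C')$ (of dimension $u_i$) to produce an embedded $u_j$-dimensional ball $\hat D \subset W^u(f^\tau(r), C')$ centered at $q$ together with some $\hat \beta > 0$ such that $\hat D \in \D(\alpha/2, \widetilde{W}^u(q, \hat \beta) \times D^0)$.

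Next, for $(D, g \times \theta) \in \D(\Delta, \widetilde{W}^u(p', \beta) \times D^k)$ with $\Delta$ and $\Delta'$ chosen small enough, the iterate $f^\tau(D)$ lies in a small neighborhood of $f^\tau(\widetilde{W}^u(p', \beta)) \subset W^u(f^\tau(r), C')$ near $q$. Using the $\D$-product structure of $D$ (which supplies a $D^k$ factor via $\theta$) together with a local identification $f^\tau(\widetilde{W}^u(p', \beta)) \cong \hat D \times D^{u_i - u_j}$ inside $W^u(f^\tau(r))$, I would construct a $(u_i + k)$-nontrivial continuous map $h_1 \times \eta_1 \colon f^\tau(D) \to \hat D \times D^{k + u_i - u_j}$ satisfying $\dist(w, h_1(w)) < \delta$ on the interior preimage, where $\delta$ is the constant given by Claim \ref{claim.presmallpurtforclosed ball} applied to $\hat D$. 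Lemma \ref{lem.smallpurtforclosedball} with $D_0 = \hat D$, $D_1 = f^\tau(D)$, and $\ell = k + u_i - u_j$ then yields $f^\tau(D) \in \D(\alpha/2 + \varepsilon, \widetilde{W}^u(q, \beta'') \times D^{k'})$ in the $U_j$-system with $k' = k + u_i - u_j$. Finally, Lemma \ref{lem.purtaofbase} applied in the $U_j$-system shifts the base from $q$ to $q'$, giving $f^\tau(D) \in \D(\alpha, \widetilde{W}^u(q', \beta') \times D^{k'})$ for a suitable $\beta'$; the inclusion $V^u(h \times \eta) \subset f^\tau(V^u(g \times \theta))$ is tracked through each of these steps.

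The main obstacle will be the construction of the matching map $h_1 \times \eta_1$, specifically the verification of its $(u_i + k)$-nontriviality and the closeness estimate. Nontriviality will combine the $u_j$-nontriviality provided by $\hat D$ (via the $T^{s,u}$-condition and Lemma \ref{lem.C0transstarimplyeclosed}), the $k$-nontriviality of the $D^k$ factor in the $\D$-description of $D$, and the extra $u_i - u_j$ dimensions coming from a local decomposition of $W^u(f^\tau(r))$ near $q$; the multiplicativity result of Lemma \ref{lem.nontrivial} should then combine these into the required homological nontriviality. The distance estimate $\dist(w, h_1(w)) < \delta$ will follow from a Hausdorff-convergence argument as $\Delta \to 0$, using property (P2) of the $\D$-structure to control the closeness of each point of $f^\tau(D)$ to $f^\tau(\widetilde W^u(p', \beta)) \subset W^u(f^\tau(r))$, hence to $\hat D \times D^{u_i-u_j}$.
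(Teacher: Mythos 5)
Your skeleton coincides with the paper's: identify the heteroclinic structure $p\in W^u(P)$, $q\in W^s(Q)$ from $\Orb^-(p)\subset W_i$, $\Orb^+(q)\subset W_j$; invoke the $T^{s,u}$-condition and Lemma \ref{lem.C0transstarimplyeclosed} at $q$ to get a disk $\hat D$ with a $\D$-structure over $\widetilde{W}^u(q,\hat\beta)$; build a matching map from $f^\tau(D)$ into a product over $\hat D$ that is nontrivial and $\d$-close to the identity; then pass through Lemma \ref{lem.smallpurtforclosedball} and finally Lemma \ref{lem.purtaofbase} to move the base point from $q$ to $q'$, tracking the $V^u$ inclusions. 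However, the step you defer as ``the main obstacle'' is exactly where the content of the proof lies, and your specific set-up makes it harder than necessary. By applying Lemma \ref{lem.C0transstarimplyeclosed} with $k=0$ you obtain only a $u_j$-dimensional $\hat D$, and you must then manufacture by hand a local identification $f^\tau(\widetilde{W}^u(p',\beta))\cong \hat D\times D^{u_i-u_j}$ together with a proof that the resulting map is $(u_i+k)$-nontrivial and satisfies the closeness hypothesis of Lemma \ref{lem.smallpurtforclosedball}; neither the construction of this identification (a tubular-neighborhood argument inside $W^u(f^\tau(P))$, plus a nontrivial retraction of $f^\tau(\widetilde{W}^u(p,\beta))$ onto it, plus control of the fiber projection in the distance estimate) nor the ``Hausdorff-convergence as $\Delta\to0$'' estimate is actually carried out, and the latter cannot be a pure limiting argument since $\Delta$ must be fixed uniformly in $(D,g\times\theta)$.

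The paper avoids all of this by applying Lemma \ref{lem.C0transstarimplyeclosed} with $k=u_p-u_q$ (where $u_p=\dim\widetilde{W}^u(p)$, $u_q=\dim\widetilde{W}^u(q)$, and $u_q\le u_p$ by Remark \ref{rk.homology}(1)): the lemma then directly produces a $u_p$-dimensional ball $\hat D\subset f^\tau(\widetilde{W}^u(p,\beta))$ already equipped with a map $\hat h_0\times\hat\eta_0$ into $\widetilde{W}^u(q,\hat\beta)\times D^{u_p-u_q}$, so the extra $u_p-u_q$ dimensions come for free and no ad hoc product decomposition is needed. The matching map is then the explicit composition $G=r\circ f^\tau\circ i_{p'}\circ g\circ f^{-\tau}$, where $r$ is a $u_p$-nontrivial retraction of $f^\tau(\widetilde{W}^u(p,\beta))$ onto $\hat D$ and $i_{p'}$ is the homeomorphism $\widetilde{W}^u(p',\beta)\to\widetilde{W}^u(p,\beta)$ supplied by continuity of the disk family; nontriviality of $G\times(\theta\circ f^{-\tau})$ is immediate from Lemma \ref{lem.nontrivial}, and the $\d$-closeness needed for Claim \ref{claim.presmallpurtforclosed ball} and Lemma \ref{lem.smallpurtforclosedball} follows from choosing $\Delta$ via property {\bf (P2)} (so that $\dist(y,g(y))<\d'/2$ on $V^u(g\times\theta)$), choosing $\Delta'$ so that $i_{p'}$ moves points less than $\d'/2$, and uniform continuity of $f^\tau$. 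If you adopt that choice of $k$ in Lemma \ref{lem.C0transstarimplyeclosed}, your outline becomes the paper's proof; as written, the proposal has a genuine gap at the construction and verification of the matching map.
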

\begin{proof}
	Let $u_p = \dim \widetilde{W}^u(p) = \dim \widetilde{W}^u(p')$ and let $u_q = \dim \widetilde{W}^u(q) = \dim \widetilde{W}^u(q)$.
	Since $\Orb^-(p) \subset W_i$ and $\Orb^+(q) \subset W_j$, there exist points $P \in \Lambda_I$ and $Q \in \Lambda_J$ such that $p \in W^u(P)$ and $q \in W^s(Q)$.
	Then $\widetilde{W}^u(p, \beta) \subset W^u(P)$ and $\widetilde{W}^s(q, \alpha) \subset W^s(Q)$.
	Since $f$ satisfies the $T^{s,u}$-condition, we see that $\widetilde{W}^s(q, \alpha/8)$ and $f^\tau(\widetilde{W}^u(p, \beta))$ satisfy the $T$-condition at $q$.
	By Remark \ref{rk.homology} (1), $\dim \widetilde{W}^u(p) \geq m - \dim \widetilde{W}^s(q) = \dim \widetilde{W}^u(q)$.
	Note that $\dim \widetilde{W}^u(p) \geq 1$ by Lemma \ref{lem.connofbasicsets} (for otherwise $\dim \widetilde{W}^u(p) = 0$, and then $W^u(\Lambda_i) = \Lambda_i$ and hence $\Lambda_i \to \Lambda_j$ does not hold).
	
	From Lemma \ref{lem.C0transstarimplyeclosed} with $(x, d, u, k)$ replaced by $(q, u_p, u_q, u_p - u_q)$, it follows that there exists $\hat{\beta} \in (0, \beta)$ and an embedded $u_p$-dimensional closed ball $\hat{D} \subset f^\tau(\widetilde{W}^u(p, \beta))$ such that $(\hat{D}, \hat{h}_0 \times \hat{\eta}_0) \in \D(\alpha/4, \widetilde{W}^u(q, \hat{\beta}) \times D^{u_p-u_q})$ for some continuous map $\hat{h}_0 \times \hat{\eta}_0 : \hat{D} \to \widetilde{W}^u(q, \hat{\beta}) \times D^{u_p-u_q}$.
	Let $\d > 0$ be the constant given by Claim \ref{claim.presmallpurtforclosed ball} with $(\alpha, \beta, \e, D_0)$ replaced by $(\alpha/4, \hat{\beta}, \min(\alpha/4, \hat{\beta}/2), \hat{D})$.
	Let $r : f^\tau(\widetilde{W}^u(p, \beta)) \to \hat{D}$ be a $u_p$-nontrivial retraction such that $r(f^\tau(\widetilde{W}^u(p, \beta)) \setminus \hat{D}) = \partial \hat{D}$.
	Then the uniform continuity of $f^\tau$
	implies that there is $\d' > 0$ such that $\dist(f^\tau(z), f^\tau(w)) < \d$ for all $z, w \in M$ with $\dist(z, w) < \d'$.
	By continuity of $\{ \widetilde{W}^u(x) ; x \in W_i \}$, we can find $\Delta' > 0$ such that if $p' \in W_i$ with $\dist(p', p) < \Delta'$, some homeomorphism $i_{p'} : \widetilde{W}^u(p', \beta) \to \widetilde{W}^u(p, \beta)$ can be chosen to satisfy
	\begin{equation}
		\sup \{ \dist(z, i_{p'}(z)) ;  z \in \widetilde{W}^u(p', \beta) \}  < \d'/2.\label{eq.defofin}
	\end{equation}
	Then it follows from {\bf (P2)} that there is $\Delta > 0$ such that every $(D, g \times \theta) \in \D(\Delta, \widetilde{W}^u(p', \beta) \times D^k)$ satisfies
	\begin{equation}
		\dist(y, g(y)) < \d'/2\label{eq.choiceofDelta}
	\end{equation}
	for all $y \in V^u(g \times \theta)$.
	
	Let us prove that $\Delta, \hat{\beta}/4$ and $\Delta'$ satisfy the property in the statement of this lemma.
	Let $(D, g \times \theta) \in \D(\Delta, \widetilde{W}^u(p', \beta) \times D^k)$. Then let us prove the following claim:
	\begin{claim}\label{claim.6}
		Let $D' = f^\tau(D)$ and let $G : D' \to \hat{D}$ be a continuous map defined by
		\[
			G(y) = r \circ f^\tau \circ i_{p'} \circ g \circ f^{-\tau}(y)
		\]
		(see Figure \ref{figure.Gn}).
		Then the following two properties hold:
		\begin{itemize}[leftmargin=0.4cm, itemindent=0cm]
			\item $G \times (\theta \circ f^{-\tau}) : D' \to \hat{D} \times D^k$ is $(u_p+k)$-nontrivial;
			\item $\dist(x, G(x)) < \d$ for all $x \in (G \times (\theta \circ f^{-\tau}))^{-1}({\rm Int}(\hat{D} \times D^k))$.
		\end{itemize}
	\end{claim}
	\begin{figure}[h]
		\centering
		\includegraphics[width=15cm]{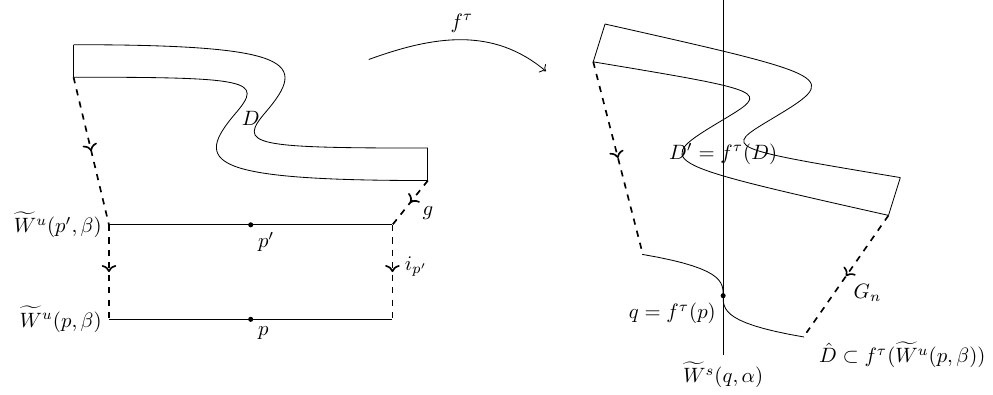}
		\caption{The construction of $G$}\label{figure.Gn}
	\end{figure}
	The map $G \times (\theta \circ f^{-\tau})$ can be decomposed into the following several maps;
	\[
		D' \xrightarrow{f^{-\tau}}
		D \xrightarrow{g \times \theta}
		\widetilde{W}^u(p', \beta) \times D^{k} 
		\xrightarrow{(f^\tau \circ i_{p'}) \times {\rm id}_{D^k}}
		f^\tau(\widetilde{W}^u(p, \beta)) \times D^{k}
		\xrightarrow{r \times {\rm id}_{D^k}}
		\hat{D} \times D^{k}.
	\]
	From this and Lemma \ref{lem.nontrivial}, it follows that the map $G \times (\theta \circ f^{-\tau})$ is $(u_p+k)$-nontrivial.
	
	Consider
	\begin{align}
		        & (G \times (\theta \circ f^{-\tau}))^{-1}({\rm Int}(\hat{D} \times D^k))                                                               \\
		=       & f^{\tau} \circ (g \times \theta)^{-1} \circ ((r \circ f^\tau \circ i_{p'}) \times {\rm id}_{D^k})^{-1}({\rm Int}(\hat{D} \times D^k)) \\
		\subset & f^{\tau} \circ (g \times \theta)^{-1} ({\rm Int}(\widetilde{W}^u(p',\beta) \times D^k))                                               \\
		=       & f^{\tau}(V(g \times \theta)).\label{eq.VG_n_times_theta_n_ftau}
	\end{align}
	Using \eqref{eq.choiceofDelta}, we see that
	\[
		\dist(f^{-\tau}(y), g \circ f^{-\tau}(y)) < \d'/2
	\]
	for all $y \in (G \times (\theta \circ f^{-\tau}))^{-1}({\rm Int}(\hat{D} \times D^k))$.
	By this and \eqref{eq.defofin},
	\begin{align}
		\dist(f^{-\tau}(y), i_{p'} \circ g \circ f^{-\tau}(y))
		 & \leq \dist(f^{-\tau}(y), g \circ f^{-\tau}(y))                   \\
		 & + \dist(g \circ f^{-\tau}(y), i_{p'} \circ g \circ f^{-\tau}(y)) \\
		 & < \d'/2 + \d'/2                                                  \\
		 & = \d'.
	\end{align}
	From the choice of $\d'$, we have
	\[
		\sup \{\dist(x, G(x)) ; x \in (G \times (\theta \circ f^{-\tau}))^{-1}({\rm Int}(\hat{D} \times D^k))\} < \d,
	\]
	which proves Claim \ref{claim.6}.
	
	Apply Lemma \ref{lem.smallpurtforclosedball} with $(\alpha, \beta, \e, u, k, l, D_0, D_1)$ replaced by
	\[
		(\alpha/4, \hat{\beta}/2, \min \{ \alpha/2, \hat{\beta}/2\}, u_q, u_p-u_q, k, \hat{D}, D')
	\]
	to have a continuous map $h' \times \eta' : D' \to \widetilde{W}^u(q, \hat{\beta}/2) \times D^{u_p-u_q + k}$ satisfying
	$(D', h' \times \eta') \in \D(\alpha/2, \widetilde{W}^u(q, \hat{\beta}/2) \times D^{u_p-u_q + k})$ and
	\begin{equation}
		V^u(h' \times \eta')
		\subset (G \times (\theta \circ f^{-\tau}))^{-1}({\rm Int} (\hat{D} \times   D^k)).
		\label{eq.inclusionbetweenbasicsetssono1}
	\end{equation}
	Then apply Lemma \ref{lem.purtaofbase} with $(\alpha, \beta, \beta', \Delta)$ replaced by $(\alpha/2, \hat{\beta}/2, \hat{\beta}/4, \alpha/2)$ to have a continuous map $h \times \eta : D' \to \widetilde{W}^u(q', \hat{\beta}/4) \times D^{u_p-u_q + k}$ satisfying $(D', h \times \eta) \in \D(\alpha, \widetilde{W}^u(q', \hat{\beta}/4) \times D^{u_p-u_q + k})$ and
	\begin{equation}
		V^u(h \times \eta)
		\subset V^u(h' \times \eta').\label{eq.inclusionbetweenbasicsetssono2}
	\end{equation}
	Now Use \eqref{eq.VG_n_times_theta_n_ftau}, \eqref{eq.inclusionbetweenbasicsetssono1} and \eqref{eq.inclusionbetweenbasicsetssono2} to obtain
	\[
		V^u(h \times \eta) \subset f^{\tau}(V(g \times \theta)).
	\]
	
	Thus, we have proved that $\Delta, \beta' = \hat{\beta}/4$ and $\Delta'$ satisfy the property in the statement of this lemma,
	finishing the proof.
\end{proof}
For the proof of Proposition \ref{pro.betweenbasicsets}, we assume the contrary. In this case, we can find $\tau_n \in \Z$ with $0 \leq \tau_n \leq L$, $\Delta_n, d_n, \beta_n \to 0$ and $N(n) \to \infty$ as $n \to \infty$ and $d_n$-pseudotrajectory $\xi^{(n)} = \{ x^{(n)}_k \}_k$ satisfying the following properties:
\begin{enumerate}[leftmargin=0.8cm, itemindent=0cm]
	\item[(a)] $x^{(n)}_{\ell} \in W_{i(n)}$ for $-N(n) \leq \ell \leq 0$ and $x^{(n)}_{\tau_n + \ell} \in W_{j(n)}$ for $0 \leq \ell \leq N(n)$;
	\item[(b)] For every $n \geq 1$, there exists $(D_n, g_n \times \theta_n) \in \D(\Delta_n, \widetilde{W}^u(x^{(n)}_0, \beta) \times D^{k_n})$ with $k_n \geq 0$ such that the following property {\bf does not} hold:
	      \begin{quote}
		      There exist $k'_n \geq 0$ and a continuous map $h_n \times \eta_n : f^{\tau_n}(D_n) \to \widetilde{W}^u(x^{(n)}_{\tau_n}, \beta_n) \times D^{k'_n}$ with $\dim \widetilde{W}^u(x^{(n)}_0) + k_n = \dim \widetilde{W}^u(x^{(n)}_\ell) + k'_n$ such that $(f^{\tau_n}(D_n), h_n \times \eta_n) \in \D(\alpha, \widetilde{W}^u(x^{(n)}_{\tau_n}, \beta_n) \times D^{k'_n})$ and
		      \[
			      V^u(h_n \times \eta_n) \subset f^{\tau_n}(V^u(g_n \times \theta_n)).
		      \]
	      \end{quote}
\end{enumerate}
Passing to a subsequence, we may assume that:
\begin{itemize}[leftmargin=0.4cm, itemindent=0cm]
	\item $x^{(n)}_{0} \to p \in W_I$, $x^{(n)}_{\tau_n} \to q \in W_J$ as $n \to \infty$ for some $W_I, W_J$ (since $\{W_i\}$ are compact); and
	\item Using the pigeonhole principle, $\tau_n = \tau \in [0, L]$, $\dim\widetilde{W}^u(x^{(n)}_0) = u$, $\dim\widetilde{W}^u(x^{(n)}_\tau) = u'$ and $k_n = k$ for some $\tau, u, u', k$ (notice that $f^\tau(p) = q$).
\end{itemize}
Since $N(n) \to \infty$ and $d_n \to 0$ as $n \to \infty$, we have
\[
	f^a(p) = \lim_{n \to \infty} f^a(x^{(n)}_{0}) = \lim_{n \to \infty} x^{(n)}_{a} \in W_I
\]
for all $a \leq 0$.
Similarly, we have $f^a(q) \in W_J$ for all $a \geq 0$.
Let $\Delta$,
$\beta' \leq \beta$, and $\Delta' > 0$ be the constants given by Lemma \ref{lem.betweenbasicsets}.
Since $x^{(n)}_{0} \to p \in W_I$ and $x^{(n)}_{\tau_n} \to q \in W_J$ as $n \to \infty$, there is $n_0 \geq 1$ such that
\[
	\max \{ \dist(x^{(n_0)}_{0}, p), \dist(x^{(n_0)}_{\tau}, q) \} < \Delta.
\]
Similarly, we may assume that $\Delta_{n_0} < \Delta$ and $\beta_n < \beta'$.
In particular,
\[
	(D_{n_0}, g_{n_0} \times \theta_{n_0}) \in \D(\Delta_{n_0}, \widetilde{W}^u(x^{(n_0)}_0, \beta) \times D^{k}) \subset \D(\Delta, \widetilde{W}^u(x^{(n_0)}_0, \beta) \times D^{k}).
\]
From these and Lemma \ref{lem.betweenbasicsets}, it follows that there exist $k' \geq 0$ and a continuous map $h' \times \eta': f^\tau(D) \to \widetilde{W}^u(x^{(n_0)}_{\tau}, \beta') \times D^{k'}$ with $\dim \widetilde{W}^u(x^{(n_0)}_{0}) + k = \dim \widetilde{W}^u(x^{(n_0)}_{\tau}) + k'$ such that
\[
	(f^\tau(D), h' \times \eta') \in \D(\alpha, \widetilde{W}^u(x^{(n_0)}_{\tau}, \beta') \times D^{k'})
\]
and
\[
	V^u(h' \times \eta') \subset f^\tau(V^u(g_{n_0} \times \theta_{n_0})).
\]
Applying Lemma \ref{lem.basicofepsilonclosever2} to $h' \times \eta'$, we have 
$h \times \eta: f^\tau(D) \to \widetilde{W}^u(x^{(n_0)}_{\tau}, \beta_{n_0}) \times D^{k'}$ with $\dim \widetilde{W}^u(x^{(n_0)}_{0}) + k = \dim \widetilde{W}^u(x^{(n_0)}_{\tau}) + k'$ such that 
\[
	(f^\tau(D), h \times \eta) \in \D(\alpha, \widetilde{W}^u(x^{(n_0)}_{\tau}, \beta_{n_0}) \times D^{k'})
\]
and
\[
	V^u(h \times \eta) \subset V^u(h' \times \eta').
\]
Thus, the proof of Proposition \ref{pro.betweenbasicsets} is complete.

Now, what is remaining is the proof of Proposition \ref{pro.eachstep}.
For the proof, fix any $\e > 0$ and $L \geq L_0$. 
Let $\tau > 0$ be such that $B(\tau, x) \subset U_i$ for all $x \in W_i$, $i=1, 2, \ldots, s$, where $U_i$ is a neighborhood of $\Lambda_i$ given in the beginning of Section \ref{sec.prepforthm.homologyshadowing} as $U_i=u$ and $\Lambda_i=\Lambda$.
Let $\alpha_0, \beta_0 > 0$ and $d_1 \in (0, d_0)$ be such that if $\xi = \{ x_k \}$ is a $d_1$-pseudotrajectory of $f$, then the following properties hold:
\begin{enumerate}[leftmargin=0.8cm, itemindent=0cm]
	\item[(a)] If there is $1 \leq i \leq s$ such that $x_k \in W_i$ for all $k \leq 0$, then there exists a point $y \in M$ such that
	      \[
		      \dist(f^k(y), x_k) < \min \{\e/2, \tau\}, \quad k \leq 0
	      \]
	      (this is possible since $f$ has the shadowing property in $W_i$ ($i=1, 2, \ldots, s$)).
	\item[(b)] If $x_j \in W_i$ and $w \in \B(x_j, \alpha_0, \beta_0)$ for some $i=1, 2, \ldots, s$ and $j \in \Z$, then
	      \begin{equation}
		      \dist(f^k(w), x_{j + k}) < \e, \quad 0 \leq k \leq L.\label{eq.finalstepshadowing}
	      \end{equation}
\end{enumerate}
We also assume that $\widetilde{W}^u(y, 2\beta_0) \subset B(y, \e/2)$ for all $y \in \bigcup_i U_i$.
From the choice of $\tau$, for $y$ given in (a) above, it follows that $f^k(y) \in U_i$ and $f^k(\widetilde{W}^u(y, 2\beta_0)) \subset \widetilde{W}^u(f^k(y), 2\beta_0)$ for all $k \leq 0$.
Thus, if $y' \in \widetilde{W}^u(y, 2\beta_0)$, then
\begin{equation}
	\dist(f^k(y'), x_k) \leq \dist(f^k(y'), f^k(y)) + \dist(f^k(y), x_k) < \e/2 + \e/2 = \e\label{eq.propertyofWy2beta_0}
\end{equation}
for all $k \leq 0$.
We apply Proposition \ref{pro.betweenbasicsets} to find numbers $\Delta = \Delta(\alpha_0, \beta_0, L)$, $d_2 = d(\alpha_0, \beta_0, L)$, $\beta' = \beta'(\alpha_0, \beta_0, L)$ and $N_0 = N(\alpha_0, \beta_0, L)$ (assuming $d_2 \leq d_1$).

After that, we find a number $\Delta_0 < \Delta(1 - \lambda)$ that satisfies inequality $\beta'/\lambda - \Delta_0 > \beta'/\mu$. Due to this choice of $\Delta_0$, we can find a number $N_1 \geq N_0$ such that
\begin{equation}
	\lambda^{N_1}\alpha_0 + \frac{\Delta_0}{1 - \lambda} < \Delta\label{eq.defofN1first}
\end{equation}
and
\begin{equation}
	\beta'\mu^{-N_1} > \beta_0,\label{eq.defofN1second}
\end{equation}
where $\mu \in (\lambda, 1)$ is given just before Proposition \ref{pro.generalizedcloseddiscchange}.
For each
\begin{equation}
	\beta \in \{ \beta', \beta'/\mu, \ldots, \beta'/\mu^{N_1}, \beta_0\},\label{eq.setofbeta}
\end{equation}
let $d(\beta) \in (0, d_2)$ be the number for which the conclusion of Proposition \ref{pro.generalizedcloseddiscchange} holds with $\alpha_0$, $\beta$, and $\Delta_0$.
Take $d_3 \in (0, d_2)$ as the minimum of $d(\beta)$ over all $\beta$ in \eqref{eq.setofbeta}.
We need the following claim.
\begin{claim}\label{claim.7}
	Making $d_3$ smaller if necessary, we may assume that if $d_3$-pseudotrajectory $\xi = \{ x_k \}$ satisfies
	\[
		x_k \in W_i, \quad k \leq 0,
	\]
	for some $1 \leq i \leq s$, then there exists $y \in U_i$ satisfying property (a) above holds and such that $\widetilde{W}^u(y, 2\beta_0) \in \D(\Delta, \widetilde{W}^u(x_0, \beta_0) \times D^0)$.
\end{claim}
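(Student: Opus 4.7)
The strategy is to shrink $d_3$ so that the shadowing point $y$ supplied by property (a) lies so close to $x_0$ that the holonomy map $z\mapsto[x_0,z]$, post-composed with a retraction, exhibits $\widetilde{W}^u(y,2\beta_0)$ as an element of $\D(\Delta,\widetilde{W}^u(x_0,\beta_0)\times D^0)$. The case $u:=\dim\widetilde{W}^u(x_0)=0$ is trivial (every map into a single point is $0$-nontrivial), so we assume $u>0$.

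First, applying Claim \ref{claim.condonWi}(2) with $\e=\Delta$, I obtain $\delta_\Delta>0$ such that $\dist(y,x_0)<\delta_\Delta$ forces $d^s_z([y,z],[x_0,z])<\Delta$ for all $z$ where both brackets are defined. Since $z\in\widetilde{W}^u(y)$ implies $[y,z]=z$, this will yield the stable-closeness required by \textbf{(P2)}. I then shrink $d_3$ so that the shadowing property of $f$ inside $W_i$ guarantees $y\in U_i$ with $\dist(f^k(y),x_k)<\min\{\e/2,\tau,\delta_\Delta\}$ for every $k\leq 0$; this realizes the original property (a) together with the extra constraint $\dist(y,x_0)<\delta_\Delta$.

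Second, I define $\varphi:\widetilde{W}^u(y,2\beta_0)\to\widetilde{W}^u(x_0)$ by $\varphi(z)=[x_0,z]$. By Claim \ref{claim.condonWi}(2) together with the continuous dependence of the local unstable disk on its base point, $\varphi$ is a homeomorphism onto its image, and that image is a small $C^0$-perturbation of $\widetilde{W}^u(x_0,2\beta_0)$; further shrinkage of $d_3$ secures $\varphi(\widetilde{W}^u(y,2\beta_0))\supset\widetilde{W}^u(x_0,\beta_0)$ with the smaller disk in the interior of the larger. I then pick a $u$-nontrivial retraction $r:\varphi(\widetilde{W}^u(y,2\beta_0))\to\widetilde{W}^u(x_0,\beta_0)$ sending the complement of $\widetilde{W}^u(x_0,\beta_0)$ onto $\partial\widetilde{W}^u(x_0,\beta_0)$, and set $h=r\circ\varphi$, with $\eta$ the unique map to $D^0$. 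Property \textbf{(P1)} will then follow from Lemma \ref{lem.nontrivial}(1): $\varphi$ is a homeomorphism between $u$-dimensional closed balls (hence $u$-nontrivial) and $r$ is $u$-nontrivial by construction. Property \textbf{(P2)} holds because any $z\in h^{-1}({\rm Int}\widetilde{W}^u(x_0,\beta_0))$ must have $\varphi(z)\in{\rm Int}\widetilde{W}^u(x_0,\beta_0)$ (since $r$ collapses the complement onto the boundary), so $h(z)=\varphi(z)=[x_0,z]$, and the stable-closeness follows from $\dist(y,x_0)<\delta_\Delta$.

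The hardest step will be quantifying the containment $\varphi(\widetilde{W}^u(y,2\beta_0))\supset\widetilde{W}^u(x_0,\beta_0)$ in terms of $\dist(y,x_0)$: it requires not merely continuity of the bracket at one pair of points but uniform continuity of the holonomy across the family $\{\widetilde{W}^u(x)\}_{x\in U_i}$, which uses compactness of $W_i$ and the continuity of $\widetilde{W}^u(\cdot)$ set up at the start of Section \ref{sec.prepforthm.homologyshadowing}. Once this uniform estimate fixes the final size of $d_3$, the remaining nontriviality arguments and bracket identifications are routine.
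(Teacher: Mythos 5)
Your reduction of the problem to ``$y$ can be taken arbitrarily close to $x_0$'' is right, and your verification of {\bf (P2)} via Claim \ref{claim.condonWi}(2) and the identity $[y,z]=z$ for $z\in\widetilde{W}^u(y,2\beta_0)$ is sound. The gap is in {\bf (P1)}: you assert that the holonomy $\varphi(z)=[x_0,z]$ is a homeomorphism of $\widetilde{W}^u(y,2\beta_0)$ onto its image, citing Claim \ref{claim.condonWi}(2) and continuous dependence of the disks. Those facts give continuity of $\varphi$, but not injectivity. Injectivity would require that distinct disks $\widetilde{W}^s(z_1)$, $\widetilde{W}^s(z_2)$ (for $z_1\neq z_2$ in $\widetilde{W}^u(y)$) cannot both pass through the same point of $\widetilde{W}^u(x_0)$; the extended semi-invariant families of Section \ref{sec.prepforthm.homologicallemma} are constructed pointwise and are nowhere shown to be pairwise disjoint or coherent, so this does not follow from anything established in the paper. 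Since your entire nontriviality argument rests on ``$\varphi$ is a homeomorphism between $u$-balls, hence $u$-nontrivial,'' the proof of {\bf (P1)} is incomplete. (The containment $\varphi(\widetilde{W}^u(y,2\beta_0))\supset\widetilde{W}^u(x_0,\beta_0)$, which you need even to define $r$, has the same problem: without degree-theoretic input it does not follow from $C^0$-closeness alone.) This is precisely the difficulty the paper's machinery is built to avoid: Lemma \ref{lem.purtaofbase} uses the graph-parametrization homeomorphism $i_y$ (which is a genuine homeomorphism by construction) to get {\bf (P1)} for free, accepts that $i_y\circ h$ only satisfies the approximate condition {\bf (P2')}, and then corrects it to an exact {\bf (P2)} map via the Tietze-extension argument of Lemma \ref{lem.generalizedpurt}.

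Once you see this, the intended proof is a one-step application of that lemma rather than a re-derivation: the pair $\bigl(\widetilde{W}^u(y,2\beta_0),\,\mathrm{id}\times\eta\bigr)$ trivially lies in $\D(\Delta/2,\widetilde{W}^u(y,2\beta_0)\times D^0)$ (since $z=[y,z]$ and $d^s_z(z,z)=0$), and Lemma \ref{lem.purtaofbase} with $(\alpha,\beta,\beta',\Delta)$ replaced by $(\Delta/2,2\beta_0,\beta_0,\Delta/2)$ produces, for $\dist(x_0,y)<d$, a map exhibiting $\widetilde{W}^u(y,2\beta_0)\in\D(\Delta,\widetilde{W}^u(x_0,\beta_0)\times D^0)$; shrinking $d_3$ so that the shadowing point of property (a) satisfies $\dist(x_0,y)<d$ finishes the claim. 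I recommend replacing your direct holonomy construction with this citation, or else supplying a genuine proof that $\varphi$ is $u$-nontrivial (e.g.\ a homotopy from $\varphi|_{\partial}$ to $i_{x_0}|_{\partial}$ inside the complement of $x_0$ in $\widetilde{W}^u(x_0)$), which is exactly the work the paper's lemmas already encapsulate.
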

Apply Lemma \ref{lem.purtaofbase} with $(\alpha, \beta, \beta', \Delta)$ replaced by $(\Delta/2, 2\beta_0, \beta_0, \Delta/2)$ and let $d > 0$ be the constant given in Lemma \ref{lem.purtaofbase}.
Since $\widetilde{W}^u(y, 2\beta_0) \in \D(\Delta/2, \widetilde{W}^u(y, 2\beta_0) \times D^0)$, if $y$ satisfies $\dist(x_0, y) < d$ then $\widetilde{W}^u(y, 2\beta_0) \in \D(\Delta, \widetilde{W}^u(x_0, \beta_0) \times D^0)$.
By using smaller $d_3$ if necessary, we can make $y$ that satisfies (a) as close to $x_0$ as we like.
This proves Claim \ref{claim.7}.

Notice that the fixed value $d_3$ depends on $\e$ and is independent of the choice of the pseudotrajectory $\{ x_i \}$.
Let $\xi \in PT(L, N_1, s)$ be a $d_3$-pseudotrajectory. Then let us prove that $\xi$ satisfies
\[
	\dist(x_k, f^k(x)) < \e, \quad k \in \Z.
\]
Since $\xi \in PT(L, N_1, s)$, there exist basic sets $\Lambda_1, \ldots, \Lambda_{\overline{s}}$ with $0 \leq \overline{s} \leq s$, $l \in \Z$ and
\begin{equation}
	0 = \tau(1) < t(2) \leq \tau(2) < t(3) \leq \tau(3) < \cdots < t(\overline{s}-1) \leq \tau(\overline{s}-1) < t(\overline{s})
\end{equation}
satisfying the four conditions in Definition \ref{dfn.PT(LKs)}.
To simplify the notation, without loss of generality, we may assume $l = 0$.

We start with the above-mentioned embedded closed ball $D = \widetilde{W}^u(y, 2\beta_0)$. There is a continuous map $h_0 \times \eta_0 : D \to \widetilde{W}^u(x_0, \beta_0) \times D^0$ with $(D, h_0 \times \eta_0) \in \D(\Delta, \widetilde{W}^u(x_0, \beta_0) \times D^0)$. Let $u_1 = \dim \widetilde{W}^u(x_0)$.
Since $t(2) - \tau(1) > N_1 \geq N_0$, by Proposition \ref{pro.betweenbasicsets}, we have $h_{t(2)} \times \eta_{t(2)} : f^{t(2)}(D) \to \widetilde{W}^u(x_{t(2)}, \beta') \times D^{u_1-u_2}$ satisfying $(f^{t(2)}(D), h_{t(2)} \times \eta_{t(2)}) \in \D(\alpha_0, \widetilde{W}^u(x_{t(2)}, \beta') \times D^{u_1-u_2})$ and
\[
	V^u(h_{t(2)} \times \eta_{t(2)}) \subset f^{t(2)}(V^u(h_0 \times \eta_0)),
\]
where $u_2 = \dim \widetilde{W}^u(x_{t(2)})$.

By the choice of $N_1$ (see the inequality \eqref{eq.defofN1second}),  there exists a number $\ell \in (0, N_1]$ such that
\begin{equation}
	\beta' \mu ^{-k} \leq \beta_0, \quad 0 \leq k < \ell, \quad \text{and} \quad \beta' \mu^{-\ell} > \beta_0.\label{eq.defofl}
\end{equation}
By Proposition \ref{pro.generalizedcloseddiscchange},
there is a continuous map $h_{t(2) + 1} \times \eta_{t(2) + 1} : f^{t(2) + 1}(D) \to \widetilde{W}^u(x_{t(2) + 1}, \beta'\mu^{-1}) \times D^{u_1-u_2}$ satisfying
\[
	(f^{t(2) + 1}(D), h_{t(2) + 1} \times \eta_{t(2) + 1}) \in \D(\lambda\alpha_0 + \Delta_0, \widetilde{W}^u(x_{t(2) + 1}, \beta'\mu^{-1}) \times D^{u_1-u_2})
\]
and
\[
	V^u(h_{t(2) + 1} \times \eta_{t(2) + 1})
	\subset f(V^u(h_{t(2)} \times \eta_{t(2)})).
\]

We continue this process and construct continuous maps
\[
	h_{t(2) + i} \times \eta_{t(2) + i} : f^{t(2) + i}(D) \to \widetilde{W}^u(x_{t(2) + i}, \beta'\mu^{-i}) \times D^{u_1-u_2}, \quad i = 0, 1, \ldots, \ell - 1,
\]
such that
\[
	(f^{t(2) + i}(D), h_{t(2) + i} \times \eta_{t(2) + i}) \in \D \left( \lambda^i\alpha_0 + \Delta_0\frac{1-\lambda}{1-\lambda^i}, \widetilde{W}^u(x_{t(2) + i}, \beta'\mu^{-i}) \times D^{u_1-u_2} \right)
\]
and
\[
	V^u(h_{t(2) + i} \times \eta_{t(2) + i}) \subset f(V^u(h_{t(2) + i - 1} \times \eta_{t(2) + i - 1}))
\]
for all $i = 1, 2, \ldots, \ell-1$.

By the choice of $\ell$ (see \eqref{eq.defofl}), using Proposition \ref{pro.generalizedcloseddiscchange} and Lemma \ref{lem.basicofepsilonclosever2}, we can find a continuous map $h_{t(2) + \ell} \times \eta_{t(2) + \ell} : f^{t(2) + \ell}(D) \to \widetilde{W}^u(x_{t(2) + \ell}, \beta_0) \times D^{u_1-u_2}$ such that
\[
	(f^{t(2) + \ell}(D), h_{t(2) + \ell} \times \eta_{t(2) + \ell}) \in \D(\lambda^\ell\alpha_0 + \Delta_0(1-\lambda)/(1-\lambda^\ell), \widetilde{W}^u(x_{t(2) + \ell}, \beta_0) \times D^{u_1-u_2})
\]
and
\[
	V^u(h_{t(2) + \ell} \times \eta_{t(2) + \ell}) \subset f(V^u(h_{t(2) + \ell-1} \times \eta_{t(2) + \ell-1})).
\]

Now continue the process described above by constructing continuous maps $h_{t(2) + i} \times \eta_{t(2) + i}$ such that
\[
	(f^{t(2) + i}(D), h_{t(2) + i} \times \eta_{t(2) + i}) \in \D \left( \lambda^{\ell+i}\alpha_0 + \Delta_0\frac{1-\lambda}{1-\lambda^{\ell+i}}, \widetilde{W}^u(x_{t(2) + \ell + i}, \beta_0) \times D^{u_1-u_2} \right)
\]
and
\[
	V^u(h_{t(2) + i} \times \eta_{t(2) + i})
	\subset f(V^u(h_{t(2) + i - 1} \times \eta_{t(2) + i - 1}))
\]
for all $i = \ell+1, \ell+2, \ldots, \tau(2) - t(2)$.
This process is stopped when we construct a continuous map
$h_{\tau(2)} \times \eta_{\tau(2)} : f^{\tau(2)}(D) \to \widetilde{W}^u(x_{\tau(2)}, \beta_0) \times D^{u_1-u_2}$ satisfying
\[
	(f^{\tau(2)}(D), h_{\tau(2)} \times \eta_{\tau(2)}) \in \D \left( \lambda^{\tau(2) - t(2)}\alpha_0 + \Delta_0\frac{1-\lambda}{1-\lambda^{\tau(2) - t(2)}}, \widetilde{W}^u(x_{\tau(2)}, \beta_0) \times D^{u_1-u_2} \right)
\]
and
\[
	V^u(h_{\tau(2)} \times \eta_{\tau(2)})
	\subset f (V^u(h_{\tau(2) - 1} \times \eta_{\tau(2) - 1})).
\]
Then, by \eqref{eq.defofN1first} and $\tau(2) - t(2) > N_1$, it follows that
\[
	(f^{\tau(2)}(D), h_{\tau(2)} \times \eta_{\tau(2)}) \in \D(\Delta, \widetilde{W}^u(x_{\tau(2)}, \beta_0) \times D^{u_1-u_2}).
\]
By our construction, we have
\begin{align}
	f^{-\tau(2)}(V^u(h_{\tau(2)} \times \eta_{\tau(2)}))
	 & \subset f^{-\tau(2) + 1}(V^u(h_{\tau(2) - 1} \times \eta_{\tau(2) - 1})) \\
	 & \subset \cdots                                                           \\
	 & \subset f^{-t(2)}(V^u(h_{t(2)} \times \eta_{t(2)}))                      \\
	 & \subset V^u(h_0 \times \eta_0).
\end{align}
Since $\tau(2) - t(2), \tau(3) - t(3) > N_1$,
we may apply Proposition \ref{pro.betweenbasicsets} to find a continuous map $h_{t(3)} \times \eta_{t(3)} : f^{t(3)}(D) \to \widetilde{W}^u(x_{t(3)}, \beta') \times D^{u_1-u_3}$ such that
\[
	(f^{t(3)}(D), h_{t(3)} \times \eta_{t(3)}) \in \D(\alpha_0, \widetilde{W}^u(x_{t(3)}, \beta') \times D^{u_1-u_3})
\]
and
\[
	V^u(h_{t(3)} \times \eta_{t(3)}) \subset f^{t(3) - \tau(2)}(V^u(h_{\tau(2)} \times \eta_{\tau(2)})),
\]
continuing this procedure.

As a consequence, we obtain a family of compact non-empty subsets of the initial closed ball $D$:
\[
	D_\ell = \overline{f^{-\ell}(V^u((h_{\ell} \times \eta_{\ell})))}
\]
indexed by the set
\begin{equation}
	\{ 0 \} \cup \left( \bigcup_{2 \leq i \leq \overline{s}-1}\{ t(i) \leq j \leq \tau(i) \} \right) \cup \{ j \geq t(\overline{s})\}.\label{eq.setofl}
\end{equation}
By the above construction, $D_{\ell_1} \subset D_{\ell_2}$ for all $l_1, l_2$ in \eqref{eq.setofl} with $l_1 > l_2$.
Let $z \in \bigcap_\ell D_\ell$, and let us claim
\begin{equation}
	\dist(x_\ell, f^\ell(z)) < \e\label{eq.conclusionofthm}
\end{equation}
for all $\ell \in \Z$.

Since $D = \widetilde{W}^u(y, 2\beta_0)$ with $y$ chosen so that property (a) holds,
\eqref{eq.propertyofWy2beta_0} implies \eqref{eq.conclusionofthm} for $\ell \leq 0$.

On the other hand, $f^\ell(z) \in f^\ell(D_\ell) = \overline{V^u((h_{\ell} \times \eta_{\ell}))}$ for all $\ell$ with \eqref{eq.setofl}.
Thus,
\[
	\widetilde{W}^s(f^\ell(z), \alpha_0) \cap \widetilde{W}^u(x_\ell, \beta_0) \neq \emptyset
\]
(see Remark \ref{rk.motivationofD(a(w(xb)))}),
and therefore $f^\ell(z) \in \B(x_\ell, \alpha_0, \beta_0)$.
Now apply \eqref{eq.finalstepshadowing} for $k = 0$ to obtain \eqref{eq.conclusionofthm}.

Finally, for $\ell=\ell_0 + k_0$, with $\ell_0$ in \eqref{eq.setofl} and $0 \leq k_0 \leq L$,
we can prove $f^{\ell_0}(z) \in \B(x_{\ell_0}, \alpha_0, \beta_0)$ as above,
and then apply \eqref{eq.finalstepshadowing} again for $k = k_0$ to obtain \eqref{eq.conclusionofthm}.

\section{The proof of Theorem \ref{thm.3dim}}
It is obvious that Theorem \ref{thm.3dim} is obtained from Theorem \ref{thm.homologyshadowing} and the following proposition.
Therefore, we prove Proposition \ref{pro.lowdim} in this section.
\begin{pro}\label{pro.lowdim}
	Let $f$ be an Axiom A diffeomorphism.
	Suppose that
	\[
		\dim W^s(x), \dim W^u(x) = 0, 1, m-1, m
	\]
	for all $ x \in M$ and $f$ satisfies the $C^0$ transversality condition.
	Then $f$ satisfies both the $T^{s, u}$ and $T^{u, s}$-condition.
\end{pro}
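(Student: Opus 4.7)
The plan is to verify, for every $p, q \in \Omega(f)$ and every common point $x = h^s(a) = h^u(b) \in W^s(p) \cap W^u(q)$, that the strong $T(U(a), U(b))$-condition holds for all sufficiently small neighborhoods $U(a), U(b)$; by symmetry (exchanging the roles of $s$ and $u$) this yields both the $T^{s,u}$ and $T^{u,s}$-conditions at once. I argue by contrapositive: assuming the strong $T$-condition fails, I construct arbitrarily small $C^0$-perturbations $\tilde h^u$ of $h^u$ whose images miss $h^s(U(a))$ locally, contradicting the $\d$-essentiality of the intersection forced by $C^0$-transversality.

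Set $\iota = \dim W^s(p)$ and $\kappa = \dim W^u(q)$, both in $\{0, 1, m-1, m\}$. A simple dimension-counting perturbation in a chart shows that $C^0$-transversality at $(a,b)$ forces $\iota + \kappa \geq m$. The cases $\iota = m$ and $\kappa = m$ follow from Remark \ref{rk.homology}(2) or are vacuous. The remaining combinations are (A) $(\iota, \kappa) = (m-1, 1)$; (B) $(\iota, \kappa) = (1, m-1)$; and (C) $(\iota, \kappa) = (m-1, m-1)$. In each, fix a local chart $\varphi: V \to (-1, 1)^m$ around $x$ with $\varphi(h^s(U(a)) \cap V) = (-1,1)^\iota \times \{0\}^{m-\iota}$.

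In Cases (A) and (C) the complement $(-1,1)^m \setminus ((-1,1)^{m-1} \times \{0\})$ has two components distinguished by the sign of the last coordinate. Failure of the strong $T$-condition means that no arc $B_u \subset U(b)$ of dimension $1 = m - \iota$ has endpoints in distinct components; equivalently, the image $\varphi \circ h^u(U(b))$ lies entirely in one closed half-space near $b$ (in Case (C) the path-connectedness of the smooth manifold $U(b)$ is used to upgrade ``points in both half-spaces'' into an embedded arc joining them). A $C^0$-small translation of $\varphi \circ h^u$ in the last coordinate away from zero then produces $\tilde h^u$ with $\varphi \circ \tilde h^u(U(b)) \cap ((-1,1)^{m-1} \times \{0\}) = \emptyset$, contradicting $\d$-essentiality.

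Case (B) is the technical heart of the proof. Here $(-1,1)^m \setminus ((-1,1) \times \{0\}^{m-1})$ is homotopy-equivalent to $S^{m-2}$ via the projection $\pi : (-1,1)^m \to (-1,1)^{m-1}$ killing the first coordinate. Setting $F := \pi \circ \varphi \circ h^u : U(b) \to (-1,1)^{m-1}$, an $(m-1)$-manifold-to-$(m-1)$-space map with $F(b) = 0$, failure of the strong $T$-condition for a small closed $(m-1)$-ball $B_u \subset U(b)$ centered at $b$ (chosen generically so $0 \notin F(\partial B_u)$) amounts to $F|_{\partial B_u} : S^{m-2} \to (-1,1)^{m-1} \setminus \{0\}$ having degree zero. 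Extending the resulting null-homotopy to a map $F' : B_u \to (-1,1)^{m-1} \setminus \{0\}$ inside a small punctured neighborhood of $0$, then defining $\tilde h^u$ by $\pi \circ \varphi \circ \tilde h^u = F'$ on $B_u$ while keeping the first coordinate of $\varphi \circ h^u$ and leaving $h^u$ unchanged outside $B_u$, produces the desired $C^0$-close perturbation whose image misses $h^s(U(a))$. The main obstacle lies precisely in this case: one must bound the $C^0$-size of the null-homotopy $F'$ by shrinking $B_u$ so that $F(B_u)$ and $F'(B_u)$ both stay inside the same small punctured neighborhood of the origin, so that $\| \tilde h^u - h^u \|_{C^0}$ can be made smaller than any prescribed $\d > 0$. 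Cases (A) and (C) reduce to elementary shifting arguments in the spirit of Petrov--Pilyugin, whereas Case (B) is where the higher-codimension obstruction theory genuinely enters.
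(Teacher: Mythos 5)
Your overall strategy (contrapositive: if the strong $T$-condition fails, build $C^0$-small perturbations with empty local intersection and contradict $\delta$-essentiality) is the same mechanism the paper uses to prove its key sign-change claims, and your Cases (A) and (C) essentially reproduce the paper's Case 1 run backwards. The genuine problem is in Case (B), which you rightly call the heart of the matter but where your argument has a gap: you assume a small embedded $(m-1)$-ball $B_u$ centered at $b$ can be ``chosen generically so that $0 \notin F(\partial B_u)$'', so that the degree of $F|_{\partial B_u}$ is defined, and you then treat only the alternative ``degree zero''. But the strong $T$-condition can also fail because \emph{no} such ball exists: $F^{-1}(0) = (h^u)^{-1}(h^s(U(a)))$ is merely a closed set containing $b$ (not a submanifold, so no general-position argument applies), and if it contains a continuum joining $b$ to the boundary of every small neighborhood — for instance if $W^u(q)$ contains an arc of $W^s(p)$ through $x$, a configuration not excluded a priori without transversality — then by separation every embedded $(m-2)$-sphere around $b$ meets $F^{-1}(0)$, the degree is never defined, and your contradiction never gets off the ground. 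A milder form of the same oversight appears in Case (A): with $\kappa = 1$, ``no arc centered at $b$ has endpoints in distinct components'' is \emph{not} equivalent to ``the image lies in one closed half-space''; $i = \pi_m \circ \varphi \circ h^u$ could vanish identically on one side of $b$ and change sign on the other, and then your vertical translation does not empty the intersection.

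The paper avoids both problems by arguing in the forward direction. It reduces Case 2 to the tangential situation ($T_xW^s \subset T_xW^u$), writes $W^u$ locally as a graph $x_m = g(x_1,\dots,x_{m-1})$ over the hyperplane containing the stable direction, and uses the perturbation argument (the analogue of Claim \ref{claim.lowdim}, applied at \emph{every} zero, not only at $b$) to force a sign change of $g$ along the stable axis — precisely the step that rules out the degenerate configurations above. It then exhibits an explicit box $B_0 = [s_1,s_2]\times B_{\R^{m-2}}(\e)$ whose boundary automatically avoids the stable curve (on the lateral faces because $(x_2,\dots,x_{m-1})\neq 0$, on the end faces because $g\neq 0$ there) and computes the nontrivial degree by an explicit homotopy to a linear model. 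To repair your Case (B) you would have to supply this sign-change step yourself, i.e., show directly from $C^0$-transversality that the degenerate branch cannot occur; at that point you are reconstructing the paper's construction, and the obstruction-theoretic contrapositive adds nothing. Your remark about controlling the $C^0$-size of the null-homotopy is correct and fixable (radially project to the sphere of radius $\sup_{\partial B_u}|F|$ before extending), but it is the secondary issue.
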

\begin{proof}
	Let us prove that $f$ satisfies the $T^{s, u}$-condition (the proof of the case where $f$ satisfies the $T^{u, s}$-condition is similar).
	If $x \in M$ satisfies $\dim W^s(x)=m$ or $\dim W^u(x) = m$,
	then $x$ belongs to a transversal intersection of $W^s(x)$ and $W^u(x)$.
	Thus, Remark \ref{rk.homology} (2) implies that $W^s(x)$ and $W^u(x)$ satisfy the $T$-condition at $x$.
	Note that if $x$ satisfies $\dim W^s(x)=0$ (resp. $\dim W^u(x) = 0$),
	then $x \in \Omega(f)$ and $\dim W^u(x)=m$ (resp. $\dim W^s(x) = m$).
	Using Remark \ref{rk.homology} (1),
	we have
	\[\dim W^s(x) + \dim W^u(x) \geq m\]
	for all $x \in M$.
	Thus, it remains to be proved is that for every $x \in M$ satisfying
	\[
		(\dim W^s(x), \dim W^u(x)) = (1, m-1), (m-1, 1), (m-1, m-1),
	\]
	$W^s(x)$ and $W^u(x)$ satisfy the $T$-condition at $x$.
	Let us consider the following two cases separately.
	\vspace{2mm}\\
	{\bf Case 1:} $(\dim W^s(x), \dim W^u(x)) = (m-1, 1), (m-1, m-1)$.
	
	Let $h_s : \R^{m-1} \to W^s(x)$ and $h_u : \R^u \to W^u(x)$ be the immersions of $W^s(x)$ and $W^u(x)$,
	where $u = 1, m-1$.
	Assume $h_s(0) = h_u(0)  = x$.
	Let $U_s \subset \R^{m-1}$ and $U_u \subset \R^u$ be arbitrary neighborhoods of the origin.
	Let $U \subset M$ be a neighborhood of $x$ and let $\phi : U \to (-1, 1)^m$ be a local chart of $(-1, 1)^{m-1}$ around $\{0\}^{m-1}$
	(i.e., $\phi(h_s(U_s) \cap U) = (-1, 1)^{m-1} \times \{ 0 \}$).
	Making $U_u$ smaller if necessary,
	we may assume that $h_u(U_u) \subset U$ and take $i : U_u \to \R$ as
	\[
		i = \pi_m \circ \phi \circ h_u,
	\]
	where $\pi_m : \R^m \to \R$ is the projection to the $m^{\text{th}}$ component.
	We need the following claim:
	\begin{claim}\label{claim.lowdim}
		If $v_0 \in U_u$ satisfies $i(v_0) = 0$ and $U_u' \subset U_u$ is  a neighborhood of $v_0$, then there exist $v_1, v_2 \in U_u'$ such that $i(v_1) > 0$ and $i(v_2) < 0$.
	\end{claim}
	Let $v_0 \in U_u$ with $i(v_0) = 0$.
	Assume to the contrary that there exists a neighborhood $U_u' \subset U_u$ of $v_0$ with $i(v) \geq 0$
	for all $v \in U_u'$ (the proof for the case where $i(v) \leq 0$
	for all $v \in U_u'$ is similar).
	Making $U_u'$ smaller if necessary, we may also assume that 
	\begin{equation}
		i(v) \in [0, 1/2]\label{eq.condoni}
	\end{equation}
	for all $v \in U_u'$.
	Let $V_u \subset U_u'$ be a neighborhood of $v_0$ with $\overline{V_u} \subset U_u'$ and
	let $\tau : \R^u \to [0, 1]$ be a $C^\infty$ function such that ${\rm Supp}(\tau) \subset U_u'$ and $\tau(v) = 1$ for all $v \in V_u$.
	Then there exists $C_1 > 0$ such that, for all $\e \in  (0, 1/2)$,
	a continuous function $\widetilde{h}_u : \R^u \to M$ defined by
	\[
		\widetilde{h}_u(y) = 
		\begin{cases}
			\phi^{-1}(\phi \circ h_u(y) + (0, 0, \ldots, 0, \e\tau(y))), & \quad y \in U_u',           \\
			h_u(y),                                                      & y \in \R^u \setminus U_u',
		\end{cases}
	\]
	satisfies $\lvert h_u, \widetilde{h_u} \rvert_{C^0} < C_1\e$.
	Then
	\[
		\pi_m \circ \phi \circ \widetilde{h_u} (v)
		= \pi_m \circ \phi \circ h_u (v)
		+ \e\tau(v)
		= i(v) + \e\tau(v) > 0
	\]
	for all $v \in V_u$.
	On the other hand, $\pi_m \circ \phi \circ h_s (U_s) = 0$.
	Thus, we have $h_s(U_s) \cap \widetilde{h_u}(V_u) = \emptyset$.
	This contradicts the $C^0$-transversality condition of $f$, which proves the claim.
	
	Since $i(\{0\}^u) = 0$, Claim \ref{claim.lowdim} implies that
	there are $v_1, v_2 \in U_u$ such that $i(v_1) > 0$ and $i(v_2) < 0$.
	Here we can suppose
	\begin{equation}
		v_1 \notin \{tv_2 \in \R^u ; t \geq 0\}.\label{eq.xyarenotlinear}
	\end{equation}
	For otherwise,
	take $v \in U_u \setminus \{tv_2 \in \R^u ; t \geq 0\}$.
	If $i(v) > 0$, then we have \eqref{eq.xyarenotlinear} with $v_1$ replaced by $v$.
	Similarly, if $i(v) < 0$, then replace $v_2$ by $v$, getting \eqref{eq.xyarenotlinear}.
	If $i(v) = 0$, then apply Claim \ref{claim.lowdim} with $v_0 = v$ to have $v' \in U_u \setminus \{tv_2 \in \R^u ; t \geq 0\}$ such that $i(v') > 0$.
	By replacing $v_1$ by $v'$, we obtain \eqref{eq.xyarenotlinear}.
	Then 
	we may take an embedded closed interval $B_u \subset U_u$ centered at the origin whose two endpoints $v_1$, $v_2$
	satisfy $i(v_1) > 0$ and $i(v_2) < 0$.
	
	This implies $\phi \circ h_u (\partial B_u) \subset (-1, 1)^m \setminus (-1, 1)^{m-1} \times \{0\}$
	and hence
	\[
		(\phi \circ h_u)_* : \widetilde{H}_{0}(\partial B_u) \to \widetilde{H}_{0}((-1, 1)^m \setminus (-1, 1)^{m-1} \times \{0\})
	\]
	is nontrivial.
	\vspace{2mm}\\
	{\bf Case 2:} $(\dim W^s(x), \dim W^u(x)) = (1, m-1)$.
	
	Let $h_s : \R \to W^s(x)$ and $h_u : \R^{m-1} \to W^u(x)$ be the immersions of $W^s(x)$ and $W^u(x)$.
	Assume $h_s(0) = h_u(0) = x$.
	Let $U_s \subset \R$ and $U_u \subset \R^{m-1}$ be arbitrary neighborhoods of the origin.
	Let $U$ be a neighborhood of $x$ and let $\phi : U \to (-1, 1)^m$ be a local chart of $(-1, 1)$ around \{0\} (i.e., $\phi(h_s(U_s) \cap U) = (-1, 1) \times \{ 0 \}^{m-1}$).
	If $W^s(x)$ and $W^u(x)$ are transversal at $x$, then by Remark \ref{rk.homology} (2),
	there is nothing to prove.
	Thus, without loss of generality, we may assume that $W^s(x)$ and $W^u(x)$ do not have a transversal intersection at $x$ and
	$\phi \circ h_u(U^u)$ is tangent to $(-1, 1)^{m-1} \times \{0\}$ at $\{0\}^m$.
	Let $\pr_m : \R^m \to \R^m$ be the projection defined by
	\[
		\pr_m(x_1, x_2, \ldots, x_m) = (x_1, x_2, \ldots, x_{m-1}, 0).
	\]
	Making $U_u$ smaller if necessary, we may assume that $\pr_m \vert_{\phi (h_u(U_u) \cap U)} : \phi (h_u(U_u) \cap U) \to (-1, 1)^{m-1} \times \{0\}$ is a diffeomorphism onto its image.
	Let $g : \pr_m \circ \phi(h_u(U_u) \cap U) \to (-1, 1)$ be the $m$-th component of the inverse of $\pr_m \vert_{\phi (h_u(U_u) \cap U)}$,
	which satisfies
	\[
		(x_1, x_2, x_3, \ldots, g(x)) = (\pr_m \vert_{\phi (h_u(U_u) \cap U)})^{-1}(x)
	\]
	for all $x = (x_1, x_2, \ldots, x_{m-1}, 0) \in \pr_m \circ \phi(h_u(U_u) \cap U)$.
	We can take $s_1 < 0 < s_2$ so that $g((s_1, 0, 0, \ldots, 0)) > 0$ and
	$g((s_2, 0, 0, \ldots, 0)) < 0$.
	In fact, suppose by contradiction that there exist $t_1 < 0 < t_2$ such that $g((t, 0, 0, \ldots, 0)) \geq 0$ for all $t \in (t_1, t_2)$.
	Then it is easy to exhibit a contradiction from the argument as in the proof of Claim \ref{claim.lowdim}.
	
	Take sufficiently small $\e > 0$ so that
	$g(x) > 0$ if $x$ is in the $\e$-neighborhood of $(s_1, 0, 0, \ldots, 0)$ and
	$g(y) < 0$ if $y$ is in the $\e$-neighborhood of $(s_2, 0, 0, \ldots, 0)$.
	Let
	\begin{align}
		S_1 & = \{ (s_1,x_2, x_3, x_4 \ldots, x_{m-1}, 0) ; (x_2, x_3 \ldots, x_{m-1}) \in  B_{\R^{m-2}}(\e) \},                                 \\
		S_2 & = \{ (s_2,x_2, x_3, x_4 \ldots, x_{m-1}, 0) ; (x_2, x_3 \ldots, x_{m-1}) \in  B_{\R^{m-2}}(\e) \}, \quad \text{ and }              \\
		C_3 & = \{ (x_1,x_2, x_3, x_4 \ldots, x_{m-1}, 0) ; s_1 \leq x_1 \leq s_2, (x_2, x_3 \ldots, x_{m-1}) \in \partial B_{\R^{m-2}}(\e) \},
	\end{align}
	where $B_{\R^{m-2}}(\e) \subset \R^{m-2}$ is a closed $(m-2)$-dimensional ball with radius $\e$ (see Figure \ref{figure.zu_case2}).
	\begin{figure}[h]
		\includegraphics[width=15cm]{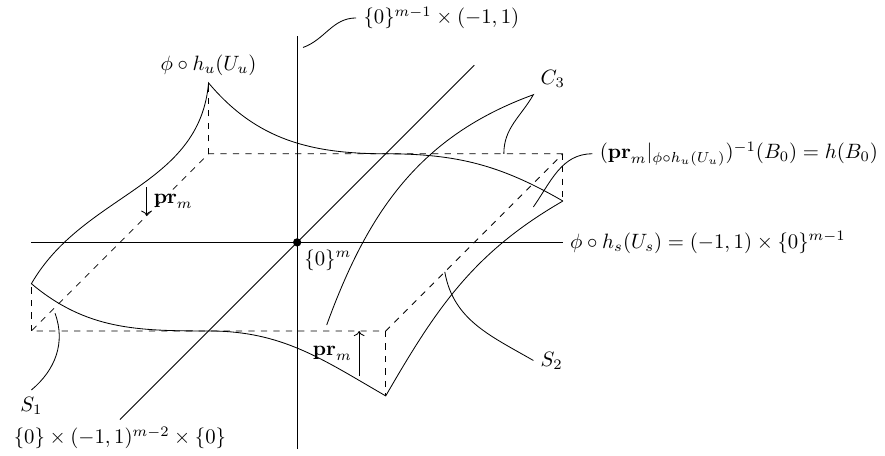}
		\caption{The map $h$}\label{figure.zu_case2}
	\end{figure}
	Define $S_0 \subset (-1, 1)^m$ by
	\[
		S_0 = S_1 \cup S_2 \cup C_3 \approx S^{m-2}.
	\]
	Then $S_0$ is the boundary of
	\[
		B_0 = \{ (x_1, x_2, x_3, x_4 \ldots, x_{m-1}, 0) ; s_1 \leq x_1 \leq s_2, (x_2, x_3 \ldots, x_{m-1}) \in B_{\R^{m-2}}(\e) \}.
	\]
	Consider
	a diffeomorphism $h : B_0 \to (-1, 1)^m$ onto its image such that
	\begin{align}
		h(x_1, x_2, x_3, \ldots, x_{m-1}, 0)
		&= (x_1, x_2, x_3, \ldots, x_{m-1}, g(x))\\
		&= (\pr_m \vert_{\phi (h_u(U_u) \cap U)})^{-1}(x).
	\end{align}
	
	\begin{claim}\label{claim.9}
		We have $h(S_0) \subset (-1, 1)^m \setminus (-1, 1) \times \{0\}^{m-1}$ and 
		$h \vert_{S_0}$ induces a nontrivial homomorphism between homology groups
		\[
			(h \vert_{S_0})_* : \widetilde{H}_{m-2}(S_0) \to \widetilde{H}_{m-2}((-1, 1)^m \setminus (-1, 1) \times \{0\}^{m-1}).
		\]
	\end{claim}
	Define $\tilde{g} : S_0 \to (-1, 1)$ by
	\[
		\tilde{g}(x) = \frac{g(s_2, 0, 0, \ldots, 0) - g(s_1, 0, 0, \ldots, 0)}{s_2 - s_1}(x_1-s_1) + g(s_1, 0, 0, \ldots, 0).
	\]
	Then
	\[
		\tilde{g}(x) = 
		\begin{cases}
			g(s_1, 0, \ldots, 0), & x \in S_1,  \\
			g(s_2, 0, \ldots, 0), & x \in S_2.
		\end{cases}
	\]
	Let $G : [0, 1] \times S_0 \to (-1, 1)$ be a map defined by
	\[
		G(t, x) = (x_1, \ldots, x_{m-1}, (1-t)g(x) + t\tilde{g}(x)).
	\]
	Then $G(t, x) > 0$ if $x \in S_1$ and $G(t, x) < 0$ if $x \in S_2$.
	Thus, 
	$G$ is a homotopy map between $h \vert_{S_0}$ and $\tilde{h} : S_0 \to (-1, 1)^m \setminus (-1, 1) \times \{0\}^{m-1}$ defined by 
	\[
		\tilde{h}(x_1, x_2, x_3, \ldots, x_{m-1}, 0) = (x_1, x_2, x_3, \ldots, x_{m-1}, \tilde{g}(x)).
	\]
	It is easy to see that $\tilde{h}$ induces a nontrivial homomorphism between the homology groups, which proves Claim \ref{claim.9}.
	
	Thus, the map $\phi^{-1} \circ h \vert_{S_0}$ satisfies the definition of $T^{s,u}$-condition, which completes the proof of Proposition \ref{pro.lowdim}.
\end{proof}
\section*{acknowledgements}
The author is grateful to my advisor S. Hayashi for his constructive suggestions and continuous support.
This work was supported by the Research Institute for Mathematical Sciences,
an International Joint Usage/Research Center located in Kyoto University.

\bibliography{math}
\bibliographystyle{amsplain}
\end{document}